\renewcommand{\div}{\operatorname{div}}
  \newcounter{mnote}
  \let\oldmarginpar\marginpar
    \renewcommand\marginpar[1]{\-\oldmarginpar[\raggedleft\footnotesize #1]%
    {\raggedright\footnotesize #1}}
\newtheorem{theorem}{Theorem}[section]
\newtheorem{lemma}[theorem]{Lemma}
\newtheorem{corollary}[theorem]{Corollary}
\newtheorem{remark}[theorem]{Remark}
\newcommand{\dx}{\,{\rm d}x}
\newcommand{\dd}{\,{\rm d}}
\newcommand{\bs}{\boldsymbol}
\newcommand{\mcal}{\mathcal}
\numberwithin{equation}{section}
\begin{document}
\title[FASP for linear elasticity]{Fast Auxiliary Space Preconditioner for Linear Elasticity in Mixed Form}
\author{Long Chen}%
\address{Department of Mathematics, University of California at Irvine, Irvine, CA 92697, USA}%
\email{chenlong@math.uci.edu}%
\author{Jun Hu}%
\address{LMAM and School of Mathematical Sciences, Peking University, Beijing 100871, China}%
\email{hujun@math.pku.edu.cn}%
\author{Xuehai Huang$^{\ast}$}%
\thanks{$^{\ast}$Corresponding author.}
\address{College of Mathematics and Information Science, Wenzhou University, Wenzhou 325035, China}%
\email{xuehaihuang@wzu.edu.cn}%

\thanks{The first author was supported by  NSF Grant DMS-1418934. This work was finished when L. Chen visited Peking University in the fall of 2015. He would like to thank Peking University for the support and hospitality, as well as for their exciting research atmosphere.}
\thanks{The second author was supported by  the NSFC Projects 11271035,  91430213 and 11421101.}
\thanks{The third author was supported by the NSFC Project 11301396, and Zhejiang Provincial
Natural Science Foundation of China Projects LY14A010020, LY15A010015 and LY15A010016.}

\subjclass[2010]{
65N55;   %% Multigrid methods; domain decomposition
65F10;   %%  Iterative methods for linear systems
65N22;   %%  Solution of discretized equations
65N30;   %%  Finite elements, Rayleigh-Ritz and Galerkin methods, finite methods;
}

\begin{abstract}
A block diagonal preconditioner with the minimal residual method and a block triangular preconditioner with the generalized minimal residual method are developed for Hu-Zhang mixed finite element methods of linear elasticity. They are based on a new stability result of the saddle point system in mesh-dependent norms. The mesh-dependent norm for the stress corresponds to the mass matrix which is easy to invert while the displacement it is spectral equivalent to Schur complement. A fast auxiliary space preconditioner based on the $H^1$ conforming linear element of the linear elasticity problem is then designed for solving the Schur complement. For both diagonal and triangular preconditioners, it is proved that the conditioning numbers of the preconditioned systems are bounded above by a constant independent of both the crucial Lam\'{e} constant and the mesh-size. Numerical examples are presented to support theoretical results. As  byproducts, a new stabilized low order mixed finite element method is proposed and analyzed and superconvergence results of Hu-Zhang element are obtained.
\end{abstract}
\maketitle

%\tableofcontents

% !TEX root =  main.tex

\section{Introduction}
We consider fast solvers for the Hu-Zhang mixed finite element methods \cite{Hu2015a,HuZhang2015c, HuZhang2015} for linear elasticity, namely fast solvers for inverting the following saddle point system
\begin{equation}\label{intro:basic}
\begin{pmatrix}
M_h^{\lambda} & B_h^T\\
B_h & O
\end{pmatrix},
\end{equation}
where $M_h^{\lambda}$ is the mass matrix weighted by the compliance tensor and $B_h$ is the discretization of the $\div$ operator. The subscript $h$ is the mesh size of a underlying triangulation and the superscript $\lambda$ is the Lam\'e number which could be very large for nearly incompressible material. We aim to develop preconditioners robust to both $h$ and $\lambda$.

In \cite{Hu2015a,HuZhang2015c, HuZhang2015}, a stability result is established in the $H(\div;\Omega)\times L^2(\Omega)$ norm whose matrix form is $(M_h+B_h^TM_{u,h}^{-1}B_h)\times M_{u,h}$, where $M_{u,h}$ is the mass matrix for the displacement and $M_h$ is the abbreviation of $M_h^0$.  By the theory developed by Mardal and Winther \cite{MardalWinther2011}, the following block diagonal preconditioner leads to a parameter independent condition number of the preconditioned system
\begin{equation}\label{Hdiv}
\begin{pmatrix}
 (M_h+B_h^TM_{u,h}^{-1}B_h)^{-1} & O\\
 O & M_{u,h}^{-1}
\end{pmatrix}.
\end{equation}
To compute the first block of \eqref{Hdiv}, however, a non-trivial solver should be designed to account for the discrete $\div$ operator.
%Characterization of $\ker(\div)$ is not easy since, so far, the underlying exact sequence for Hu-Zhang element is unknown.

Motivated by our recent work \cite{chen2016multigrid}, we shall establish another stability result of \eqref{intro:basic} in mesh dependent norms $\|\cdot\|_{0,h}\times |\cdot|_{1,h}$ whose equivalent matrix form is $M_h \times B_hM_h^{-1}B_h^T$. Therefore we can use the block diagonal preconditioner
\begin{equation}\label{intro:Schur}
\begin{pmatrix}
 M_h^{-1} & O\\
 O & (B_hM_h^{-1}B_h^T)^{-1}
\end{pmatrix}
\end{equation}
together with the MINRES method to solve \eqref{intro:basic}. The mass matrix $M_h$ can be further replaced by its diagonal matrix and thus a spectral equivalent approximation of $M_h^{-1}$ is easy to construct. The difficulty is the inverse of the Schur complement.

We shall develop a fast auxiliary space preconditioner for the Schur complement. The auxiliary space preconditioner was initially designed by Xu~\cite{Xu1996} to avoid the difficulty in creating a sequence of nonnested grids or nonnested finite element spaces. As a two level method, the auxiliary space preconditioner involves smoothing on the fine level space which is usually the to-be-solved finite element space, and a coarse grid correction on an auxiliary space which is much more flexible to choose. It has been successfully applied to many finite element methods for partial differential equations, including conforming and nonconforming finite element method for the second order or fourth order problem~\cite{Xu1996, ZhangXu2014}, $H(\textrm{curl})$ and $H(\textrm{div})$ problems~\cite{HiptmairXu2007, KolevVassilevski2008, KolevVassilevski2009,TuminaroXuZhu2009, KrausLazarovLymberyMargenovEtAl2016}, DG type discretizations~\cite{BrixCamposDahmen2008, CockburnDuboisGopalakrishnanTan2014,ChenWangWangYe2015, LiXie2015, ZhongChungLiu2015}, and general
symmetric positive definite problems \cite{KrausLymberyMargenov2015} etc.

We use the $H^1$ conforming linear finite element discretization on the same mesh for the linear elasticity equation with parameter $\lambda = 0$ as the auxiliary problem to preconditioning the Schur complement. Since $\lambda = 0$, we can solve the auxiliary problem by geometric multigrid methods for structured meshes and algebraic multigrid methods in general. Using the Korn's inequality, we can further adopt the $H^1$ conforming linear finite element discretization for vector-type Poisson equation as the auxiliary problem.

% since the auxiliary space method is just used as a preconditioner rather than an accurate solver. It has been successfully applied to many finite element methods for partial differential equations, such as nonconforming finite element method and high order conforming method for the second order problem~\cite{Xu1996}, conforming and nonconforming
%finite element methods for the fourth order problem~\cite{Xu1996, ZhangXu2014}, conforming finite element discretizations of $H(\textrm{curl})$ and $H(\textrm{div})$~\cite{HiptmairXu2007, KolevVassilevski2008, KolevVassilevski2009}, mixed finite element methods for the second order problem~\cite{TuminaroXuZhu2009}, HDG method~\cite{CockburnDuboisGopalakrishnanTan2014}, and weak Galerkin method~\cite{ChenWangWangYe2015, LiXie2014} etc.
%In this paper, we use the Schur complement for the fine level, and the conforming linear finite element discretization on the same mesh for linear elasticity with $\lambda = 0$
%as the auxiliary problem, which can be solved efficiently by geometric multigrid method for structured mesh.
%By the Korn's inequality,
%we can further adopt the conforming linear finite element discretization for vector-type Poisson equation as the auxiliary problem. In this case, it is trivial to solve the auxiliary problem by geometric multigrid method for structured mesh and algebraic multigrid method for unstructured mesh.

Our stability result is robust to the parameter $\lambda$ and $h$, therefore the condition number of the preconditioned system is uniformly bounded with respect to both the size of the problem and the parameter $\lambda$. The later is notoriously difficult to construct for linear elasticity. Furthermore our results hold without the full regularity assumption.

We now give a brief literature review on robust multigrid methods for the linear elasticity problem.
Discretization of the linear elasticity equations can be classified into three categories: displacement primary formulation, displacement-pressure mixed formulation and stress-displacement mixed formulation.
Robust conforming and nonconforming multigrid methods for the primary formulation have been discussed in \cite{Schoberl1999, Wieners2000, LeeWuChen2009}, and  discontinuous Galerkin $H(\div)$-conforming method in \cite{HongKrausXuZikatanov2016}.
The W-cycle multigrid methods are the most studied multigrid methods for the displacement-pressure mixed formulation, which can be found in \cite{Lee1998, BrennerLiSung2014} for conforming discretization and \cite{Brenner1993, Brenner1994} for nonconforming discretization. A V-cycle multigrid method for the finite difference discretization was developed in \cite{ZhuSifakisTeranBrandt2010}.
In \cite{AxelssonPadiy1999}, the Taylor-Hood element method was reduced to the pressure Schur complement equation, based on which an inner/outer iteration scheme was set up.
So far the solvers for the stress-displacement mixed formulation are mainly concentrated on the block diagonal preconditioned MINRES method, see \cite{KlawonnStarke2004, Wang2005, PasciakWang2006}. In \cite{KlawonnStarke2004}, the multigrid preconditioner was advanced for the PEERS element method with weakly symmetric stress.
As for the Arnold-Winther element discretization,
the overlapping Schwarz preconditioner was exploited in \cite{Wang2005}, and the variable V-cycle multigrid preconditioner was developed in \cite{PasciakWang2006}.
The majority of existing works is to deal with the discrete null space ker($\div$) by using either block-wise Gauss-Seidels smoother or overlapping Schwarz smoothers.
And only works in \cite{LeeWuChen2009, HongKrausXuZikatanov2016, BrennerLiSung2014, KlawonnStarke2004, PasciakWang2006} do not rely on the $H^2$ regularity assumption.
As we mentioned early our approach do not require a prior knowledge of the discrete ker($\div$). We transfer this difficulty to solve the Schur complement problem but with $\lambda=0$, which only involves standard Poisson-type solvers. So it is much easier to implement and analyze.
% and the block triangular version is also efficient.

%The block-diagonal preconditioner \eqref{intro:Schur}, however, is less efficient. If the Schur complement is approximated by one V-cycle of auxiliary space preconditioner, the preconditioned MINRES will converge around $100$ steps.
To further improve the performance, we propose the following block-triangular preconditioner
\begin{equation}
\begin{pmatrix}
 I   &  D_h^{-1} B_h^T \\
0  &   -I
\end{pmatrix}
\begin{pmatrix}
D_h    &   0  \\
B_h   &  \tilde S_h
\end{pmatrix}^{-1},
\end{equation}
where $D_h$ is the diagonal of $M_h$ and $\tilde S_h = B_hD_h^{-1}B_h^T$ will be further preconditioned by the auxiliary space preconditioner we mentioned before. Numerical results in Section 6 show that the preconditioned GMRES converges around $40$ steps to push the relative tolerance below $10^{-8}$.

%\LC{Literature review.}
Results in this paper can be also applied to other $H(\div)$ conforming and symmetric stress elements developed in \cite{ArnoldWinther2002,ArnoldAwanou2005,AdamsCockburn2005,ArnoldAwanouWinther2008}. Indeed we present our results for both the original Hu-Zhang element $k\geq n+1$ and a new stabilized version for $1\leq k\leq n$.

%\LC{We first prove the stability for  xxxx \cite{Hu2015a,HuZhang2015c, HuZhang2015} in and then extend the result to the stabilized version xxxx~.  xxx How about the weakly symmetric case (cf. \cite{Stenberg1988, JuntunenLee2014})?}
%

%\LC{We shall also design block triangular preconditioner. Add the preconditioner}

The rest of this article is organized as follows. In Section 2, we present the mixed finite element methods for linear elasticity. In Section 3, we establish the stability based on the mesh dependent norms. Then we describe the block diagonal and triangular preconditioners in Section 4 and construct an auxiliary space preconditioner in Section 5.
In Section 6, we give some numerical experiments to demonstrate the efficiency and robustness of our preconditioners. Throughout this paper, we use ``$\lesssim\cdots $" to mean that ``$\leq C\cdots$", where $C$ is a generic positive constant independent of $h$ and the Lam$\acute{e}$ constant $\lambda$, which may take different values at different appearances.

% !TEX root =  main.tex

\section{Mixed Finite Element Methods}

Assume that $\Omega\subset \mathbb{R}^n$ is a bounded
polytope. Denote by $\mathbb{S}$ the space of all symmetric $n\times n$ tensors.
Given a bounded domain $G\subset\mathbb{R}^{n}$ and a
non-negative integer $m$, let $H^m(G)$ be the usual Sobolev space of functions
on $G$, and $\boldsymbol{H}^m(G; \mathbb{X})$ be the usual Sobolev space of functions taking values in the finite-dimensional vector space $\mathbb{X}$ for $\mathbb{X}$ being $\mathbb{S}$ or $\mathbb{R}^n$. The corresponding norm and semi-norm are denoted respectively by
$\Vert\cdot\Vert_{m,G}$ and $|\cdot|_{m,G}$. Let $(\cdot, \cdot)_G$ be the standard inner product on $L^2(G)$ or $\boldsymbol{L}^2(G; \mathbb{X})$. If $G$ is $\Omega$, we abbreviate
$\Vert\cdot\Vert_{m,G}$, $|\cdot|_{m,G}$ and $(\cdot, \cdot)_G$ by $\Vert\cdot\Vert_{m}$, $|\cdot|_{m}$ and $(\cdot, \cdot)$,
respectively. Let $\boldsymbol{H}_0^m(G; \mathbb{R}^n)$ be the closure of $\boldsymbol{C}_{0}^{\infty}(G; \mathbb{R}^n)$ with
respect to the norm $\Vert\cdot\Vert_{m,G}$.
Denote by $\boldsymbol{H}(\mathbf{div}, G; \mathbb{S})$ the Sobolev space of square-integrable symmetric tensor fields with square-integrable divergence.
For any $\boldsymbol{\tau}\in\boldsymbol{H}(\mathbf{div}, \Omega; \mathbb{S})$, we equip the following norm
\[
\|\boldsymbol{\tau}\|_{\boldsymbol{H}(\mathbf{div})}:= \left (\|\boldsymbol{\tau}\|_0^2+\|\mathbf{div}\boldsymbol{\tau}\|_0^2\right )^{1/2}.
\]

The Hellinger-Reissner mixed formulation of the linear elasticity under the load $\boldsymbol{f}\in\boldsymbol{L}^2(\Omega; \mathbb{R}^n)$ is given as follows:
Find $(\boldsymbol{\sigma}, \boldsymbol{u})\in \boldsymbol{\Sigma}\times \boldsymbol{V}:=\boldsymbol{H}(\mathbf{div}, \Omega; \mathbb{S})\times\boldsymbol{L}^2(\Omega; \mathbb{R}^n)$ such that
\begin{align}
a(\boldsymbol{\sigma},\boldsymbol{\tau})+b(\boldsymbol{\tau}, \boldsymbol{u})&=0 \quad\quad\quad\quad\;\;\, \forall\,\boldsymbol{\tau}\in\boldsymbol{\Sigma}, \label{mixedform1} \\
b(\boldsymbol{\sigma}, \boldsymbol{v})&=-(\boldsymbol{f}, \boldsymbol{v}) \quad\quad \forall\,\boldsymbol{v}\in \boldsymbol{V}, \label{mixedform2}
\end{align}
where
\[
a(\boldsymbol{\sigma},\boldsymbol{\tau}):=(\mathfrak{A}\boldsymbol{\sigma}, \boldsymbol{\tau}),\quad
b(\boldsymbol{\tau}, \boldsymbol{v}):= (\mathbf{div}\boldsymbol{\tau}, \boldsymbol{v})
\]
with $\mathfrak{A}$ being the compliance tensor of fourth order defined
by
\[
\mathfrak{A}\boldsymbol{\sigma}:=\frac{1}{2\mu}\left(\boldsymbol{\sigma}
-\frac{\lambda}{n\lambda+2\mu}(\textrm{tr}\boldsymbol{\sigma})\boldsymbol{\delta}\right).
\]
Here $\boldsymbol{\delta}:=(\delta_{ij})_{n\times n}$ is the
Kronecker tensor, tr is the trace operator, and positive constants $\lambda$ and $\mu$ are the
Lam$\acute{e}$ constants.

Suppose the domain $\Omega$ is subdivided by a family of shape regular simplicial grids $\mathcal {T}_h$  (cf.\ \cite{BrennerScott2008,Ciarlet1978}) with $h:=\max\limits_{K\in \mathcal {T}_h}h_K$
and $h_K:=\mbox{diam}(K)$.
Let $\mathcal{F}_h$ be the union of all $n-1$ dimensional faces
of $\mathcal {T}_h$ and $\mathcal{F}_h^i$ be the union of all $n-1$ dimensional  interior faces. For any $F\in\mathcal{F}_h$,
denote by $h_F$ its diameter and fix a unit normal vector $\boldsymbol{\nu}_F$.
Let $P_m(G)$ stand for the set of all
polynomials in $G$ with the total degree no more than $m$, and $\boldsymbol{P}_m(G; \mathbb{X})$ denote the tensor or vector version of $P_m(G)$ for $\mathbb{X}$ being $\mathbb{S}$ or $\mathbb{R}^n$, respectively.

Consider two adjacent simplices $K^+$ and $K^-$ sharing an interior face $F$.
Denote by $\boldsymbol{\nu}^+$ and $\boldsymbol{\nu}^-$ the unit outward normals
to the common face $F$ of the simplices $K^+$ and $K^-$, respectively.  For a
vector-valued function $\boldsymbol{w}$, write $\boldsymbol{w}^+:=\boldsymbol{w}|_{K^+}$ and $\boldsymbol{w}^-:=\boldsymbol{w}|_{K^-}$.
Then define a  jump as
\[
[\boldsymbol{w}]:=\left\{
\begin{array}{ll}
\boldsymbol{w}^+(\boldsymbol{\nu}^+\cdot\boldsymbol{\nu}_F)+\boldsymbol{w}^-(\boldsymbol{\nu}^-\cdot\boldsymbol{\nu}_F), & \textrm{ if } F\in \mathcal{F}_h^i, \\
\boldsymbol{w}, &  \textrm{ if } F\in \mathcal{F}_h\backslash\mathcal{F}_h^i.
\end{array}
\right.
\]

For each $K\in\mathcal{T}_h$, define an $\boldsymbol{H}(\mathbf{div}, K; \mathbb{S})$ bubble function space of polynomials of degree $k$ as
\[
\boldsymbol{B}_{K,k}:=\left\{\boldsymbol{\tau}\in \boldsymbol{P}_{k}(K; \mathbb{S}): \boldsymbol{\tau}\boldsymbol{\nu}|_{\partial K}=\boldsymbol{0}\right\}.
\]
It is easy to check that $\boldsymbol{B}_{K,1}$ is merely the zero space.
Denote the vertices of simplex $K$ by $\boldsymbol{x}_{K,0}, \cdots, \boldsymbol{x}_{K,n}$.
If not causing confusion, we will abbreviate $\boldsymbol{x}_{K,i}$ as $\boldsymbol{x}_{i}$ for $i=0,\cdots, n$.
For any edge $\boldsymbol{x}_i\boldsymbol{x}_j$($i\neq j$) of element $K$, let $\boldsymbol{t}_{i,j}$ be the associated unit tangent vectors and
\[
\boldsymbol{T}_{i,j}:=\boldsymbol{t}_{i,j}\boldsymbol{t}_{i,j}^T,\quad 0\leq i<j\leq n.
\]
It has been proved in \cite{Hu2015a} that the $(n+1)n/2$ symmetric tensors $\boldsymbol{T}_{i,j}$ form a basis of $\mathbb{S}$, and for $k\geq 2$,
\[
\boldsymbol{B}_{K,k}=\sum_{0\leq i<j\leq n}\lambda_i\lambda_jP_{k-2}(K)\boldsymbol{T}_{i,j},
\]
where $\lambda_i$ is the associated barycentric coordinates corresponding to $\boldsymbol{x}_i$ for $i=0,\cdots,n$.
Some global finite element spaces are given by
\begin{align*}
\boldsymbol{B}_{k,h}&:=\left\{\boldsymbol{\tau}\in\boldsymbol{H}(\mathbf{div}, \Omega; \mathbb{S}):
\boldsymbol{\tau}|_K\in \boldsymbol{B}_{K,k} \quad \forall\,K\in\mathcal{T}_h \right\}, \\
\widetilde{\boldsymbol{\Sigma}}_{k,h}&:=\left\{\boldsymbol{\tau}\in\boldsymbol{H}^1(\Omega; \mathbb{S}):
\boldsymbol{\tau}|_K\in \boldsymbol{P}_{k}(K; \mathbb{S}) \quad \forall\,K\in\mathcal{T}_h \right\}, \\
\boldsymbol{\Sigma}_{h}&:=\widetilde{\boldsymbol{\Sigma}}_{k,h} + \boldsymbol{B}_{k,h},\label{spaceprop}\\
\boldsymbol{V}_{h}&:=\left\{\boldsymbol{v}\in \boldsymbol{L}^2(\Omega; \mathbb{R}^n): \boldsymbol{v}|_K\in \boldsymbol{P}_{k-1}(K; \mathbb{R}^n)\quad \forall\,K\in\mathcal
{T}_h\right\},
\end{align*}
with integer $ k\geq 1$. The local rigid motion space is defined as
\[
\boldsymbol{R}(K):= \left\{\boldsymbol{v}\in \boldsymbol{H}^1(K; \mathbb{R}^n): \boldsymbol{\varepsilon}(\boldsymbol{v})=\boldsymbol{0}\right\}
\]
with
$\boldsymbol{\varepsilon}(\boldsymbol{v}):=\left(\boldsymbol{\nabla}\boldsymbol{v}+(\boldsymbol{\nabla}\boldsymbol{v})^T\right)/2$ being the linearized strain tensor.

With previous preparation,
the mixed finite element method for linear elasticity proposed in \cite{Hu2015a,HuZhang2015c,HuZhang2015,ChenHuHuang2015} is defined as follows:
Find $(\boldsymbol{\sigma}_h, \boldsymbol{u}_h)\in \boldsymbol{\Sigma}_{h}\times \boldsymbol{V}_{h}$ such that
\begin{align}
a(\boldsymbol{\sigma}_h,\boldsymbol{\tau}_h)+b(\boldsymbol{\tau}_h, \boldsymbol{u}_h)&=0 \quad\quad\quad\quad\quad\, \forall\,\boldsymbol{\tau}_h\in\boldsymbol{\Sigma}_{h}, \label{stabmfem1} \\
b(\boldsymbol{\sigma}_h, \boldsymbol{v}_h) - c(\boldsymbol{u}_h, \boldsymbol{v}_h)&=-(\boldsymbol{f}, \boldsymbol{v}_h) \quad\quad \forall\,\boldsymbol{v}_h\in \boldsymbol{V}_{h}, \label{stabmfem2}
\end{align}
where
\[
c(\boldsymbol{u}_h,\boldsymbol{v}_h):=\eta\sum\limits_{F\in\mathcal{F}_h}h_{F}^{-1}\int_{F}[\boldsymbol{u}_h]\cdot[\boldsymbol{v}_h]\,{\rm d}s,
\]
\[
\eta:=\left\{
\begin{array}{ll}
0, & \textrm{ if } k\geq n+1, \\
1, &  \textrm{ if } 1\leq k\leq n.
\end{array}
\right.
\]
The bilinear form $c(\cdot,\cdot)$ involving the jump of displacement is introduced to stabilize the discretization which is only necessary for low order polynomials, i.e., $1\leq k\leq n$. Note that the scaling $h_F^{-1}$ is different with the one in \cite{ChenHuHuang2015}.

Choosing appropriate bases of $ \boldsymbol{\Sigma}_{h}$ and $\boldsymbol{V}_{h}$, we can write the matrix form of \eqref{stabmfem1}-\eqref{stabmfem2} as
\begin{equation}\label{basic}
\begin{pmatrix}
M_h^{\lambda} & B_h^T\\
B_h & -C_h
\end{pmatrix}
\begin{pmatrix}
 \bs \sigma_h\\
\bs u_h
\end{pmatrix}
=
\begin{pmatrix}
0\\
\bs f
\end{pmatrix}.
\end{equation}
where $M_h^{\lambda}$ is the mass matrix weighted by the compliance tensor, $B_h$ is the discretization of the $\div$ operator, and $C_h$ corresponds to the stabilization term. Here with a slight abuse of notation, we use the same notation $\bs \sigma_h, \bs u_h$, and $\bs f$ for the vector representations of corresponding functions.

Let
\[
\hat{\boldsymbol{\Sigma}}_{h}:=\{\boldsymbol{\tau}\in\boldsymbol{\Sigma}_{h}:
\int_{\Omega}\textrm{tr}\boldsymbol{\tau}\,\dd x=0 \},
\]
\[
\mathbb A(\boldsymbol{\sigma}_h, \boldsymbol{u}_h; \boldsymbol{\tau}_h, \boldsymbol{v}_h):=a(\boldsymbol{\sigma}_h,\boldsymbol{\tau}_h)+b(\boldsymbol{\tau}_h, \boldsymbol{u}_h)+b(\boldsymbol{\sigma}_h, \boldsymbol{v}_h) - c(\boldsymbol{u}_h, \boldsymbol{v}_h).
\]
For $k\geq n+1$, the following inf-sup condition is the immediate result of (3.4)-(3.5) in \cite{Hu2015a}:
\begin{equation}\label{eq:infsup1}
\|\widetilde{\boldsymbol{\sigma}}_h\|_{\boldsymbol{H}(\mathbf{div})} + \|\widetilde{\boldsymbol{u}}_h\|_{0}\lesssim \sup_{(\boldsymbol{\tau}_h, \boldsymbol{v}_h)\in \hat{\boldsymbol{\Sigma}}_{h}\times \boldsymbol{V}_{h}}\frac{\mathbb A(\widetilde{\boldsymbol{\sigma}}_h, \widetilde{\boldsymbol{u}}_h; \boldsymbol{\tau}_h, \boldsymbol{v}_h)}{\|\boldsymbol{\tau}_h\|_{\boldsymbol{H}(\mathbf{div})}+ \|\boldsymbol{v}_h\|_{0}},
\end{equation}
for any $(\widetilde{\boldsymbol{\sigma}}_h, \widetilde{\boldsymbol{u}}_h)\in \hat{\boldsymbol{\Sigma}}_{h}\times \boldsymbol{V}_{h}$.

Thanks to the inf-sup condtion~\eqref{eq:infsup1}, the system \eqref{basic} is stable in the space $\bs \Sigma_h \times \bs V_h$ equipped with the $H(\div;\Omega)\times L^2(\Omega)$ norm which leads to a block diagonal preconditioner requiring a non-trivial solver for $(M_h+B_h^TM_{u,h}^{-1}B_h)^{-1}$. In the next section we shall establish another stability result of \eqref{basic} in mesh dependent norms which leads to a new block-diagonal preconditioner.

% !TEX root =  main.tex

\section{Stability Based On Mesh Dependent Norms}

To construct a new block diagonal preconditioner, we will show that the bilinear form $\mathbb A(\cdot, \cdot; \cdot, \cdot)$ is stable on $\hat{\boldsymbol{\Sigma}}_{h}\times \boldsymbol{V}_{h}$  with mesh dependent norms.

%\subsection{Outline}
For each $K\in\mathcal{T}_h$,
denote by $\boldsymbol{\nu}_i$ the unit outward normal vector of the $i$-th face of element $K$. For any $\boldsymbol{\tau}_h\in \boldsymbol{\Sigma}_{h}$ and $\boldsymbol{v}_h\in \boldsymbol{V}_{h}$,
define
\begin{align*}
\|\boldsymbol{\tau}_h\|_{0,h}^2 & := \|\boldsymbol{\tau}_h\|_0^2 + \sum_{F\in \mathcal F_h}h_F\|\boldsymbol{\tau}_h\boldsymbol{\nu}_F\|_{0,F}^2 \\
|\boldsymbol{v}_h|_{1,h}^2 &:= \|\boldsymbol{\varepsilon}_h (\boldsymbol{v}_h)\|_0^2 + \sum_{F\in \mathcal F_h} h_F^{-1}\|[\boldsymbol{v}_h]\|_{0,F}^2, \\
\|\boldsymbol{v}_h\|_{c}^2 &:= c(\boldsymbol{v}_h,\boldsymbol{v}_h).
\end{align*}
Here $\boldsymbol{\varepsilon}_h$ is element-wise symmetric gradient. We shall prove the stability of \eqref{basic} in the mesh dependent norms $\|\cdot\|_{0,h}\times |\cdot|_{1,h}$. The key is the following inf-sup condition: for $k\geq n+1$
\begin{equation}\label{eq:infsup2}
|\boldsymbol{v}_h|_{1,h}\lesssim \sup_{\boldsymbol{\tau}_h \in \boldsymbol{\Sigma}_{h}} \frac{b(\boldsymbol{\tau}_h, \boldsymbol{v}_h)}{\|\boldsymbol{\tau}_h\|_{0,h}}, \quad \forall \bs v_h \in \bs V_h.
\end{equation}
For low order cases $1\leq k\leq n$, in addition to a variant of the inf-sup condition, we also need a coercivity result in the null space of the div operator.

\subsection{Properties on mesh dependent norms}
We first present a different basis of the symmetric tensor space $\mathbb S$.
Inside a simplex formed by vertices $\bs x_0,\ldots, \bs x_n$, we label the face opposite to $\bs x_i$ as the $i$-th face $F_i$. For the edge $\bs x_i \bs x_j, \, i\neq j$, define
\[
\boldsymbol{N}_{i,j}:=\frac{1}{2(\boldsymbol{\nu}_i^T\boldsymbol{t}_{i,j})(\boldsymbol{\nu}_j^T\boldsymbol{t}_{i,j})}(\boldsymbol{\nu}_i\boldsymbol{\nu}_j^T+\boldsymbol{\nu}_j\boldsymbol{\nu}_i^T),\quad 0\leq i<j\leq n.
\]
Here recall that $\bs t_{ij}$ is an unit tangent vector of edge $\bs x_i \bs x_j$ and $\bs \nu_i$ is the unit outwards normal vector of face $F_i$.
Due to the shape regularity of the triangulation, it holds
\[
\boldsymbol{\nu}_i^T\boldsymbol{t}_{i,j}\eqsim 1,\quad 0\leq i<j\leq n.
\]

By direct manipulation, we have the following results about $\boldsymbol{T}_{i,j}$ and $\boldsymbol{N}_{i,j}$:
\begin{equation}\label{eq:TNprop1}
\boldsymbol{T}_{i,j}:\boldsymbol{N}_{k,l}=\delta_{ik}\delta_{jl},\quad 0\leq i<j\leq n, \; 0\leq k<l\leq n,
\end{equation}
\begin{equation}\label{eq:TNprop2}
\boldsymbol{T}_{i,j}:\boldsymbol{T}_{i,j}=1,\quad \boldsymbol{N}_{i,j}:\boldsymbol{N}_{i,j}\eqsim1,\quad 0\leq i<j\leq n.
\end{equation}
Thus the $(n+1)n/2$ symmetric tensors $\{\boldsymbol{N}_{i,j}\}$ also form a basis of $\mathbb{S}$ which is the dual to $\{\bs T_{i,j}\}$.

\begin{lemma}\label{lem:TNnorm}
For any $q_{ij}\in L^2(K)$, $0\leq i<j\leq n$, let $\boldsymbol{\tau}_1=\sum\limits_{0\leq i<j\leq n}q_{ij}\boldsymbol{T}_{i,j}$ and $\boldsymbol{\tau}_2=\sum\limits_{0\leq i<j\leq n}q_{ij}\boldsymbol{N}_{i,j}$, then it holds
\[
\|\boldsymbol{\tau}_1\|_{0,K}^2\eqsim \|\boldsymbol{\tau}_2\|_{0,K}^2\eqsim \sum_{0\leq i<j\leq n}\|q_{ij}\|_{0,K}^2.
\]
\end{lemma}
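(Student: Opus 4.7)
The plan is to reduce everything to a pointwise estimate on the algebraic coefficients of a tensor in $\mathbb S$ expressed in the bases $\{\bs T_{i,j}\}$ and $\{\bs N_{i,j}\}$, and then integrate. The two relations \eqref{eq:TNprop1} and \eqref{eq:TNprop2} are exactly what is needed: the first provides the biorthogonality that lets us extract coefficients via the Frobenius inner product, and the second normalises all basis tensors to constants that depend only on shape regularity.

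First, for $\bs\tau_1(x)=\sum_{i<j}q_{ij}(x)\bs T_{i,j}$, I would pair with the dual basis: by \eqref{eq:TNprop1}, $\bs\tau_1(x):\bs N_{k,l}=q_{kl}(x)$, so Cauchy--Schwarz in $\mathbb S$ combined with \eqref{eq:TNprop2} gives $|q_{kl}(x)|^2\le \|\bs\tau_1(x)\|^2\,\|\bs N_{k,l}\|^2\lesssim \|\bs\tau_1(x)\|^2$. Summing over the $n(n+1)/2$ pairs yields $\sum_{i<j}|q_{ij}(x)|^2\lesssim \|\bs\tau_1(x)\|^2$ pointwise. For the reverse direction I would use the triangle inequality together with $\|\bs T_{i,j}\|=1$ to get $\|\bs\tau_1(x)\|\le \sum_{i<j}|q_{ij}(x)|$, then square and apply the elementary inequality $(\sum a_k)^2\le N\sum a_k^2$ with $N=n(n+1)/2$ to obtain $\|\bs\tau_1(x)\|^2\lesssim \sum_{i<j}|q_{ij}(x)|^2$.

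Integrating these two pointwise equivalences over $K$ immediately produces $\|\bs\tau_1\|_{0,K}^2\eqsim\sum_{i<j}\|q_{ij}\|_{0,K}^2$. The argument for $\bs\tau_2=\sum q_{ij}\bs N_{i,j}$ is symmetric: extract coefficients by $q_{kl}(x)=\bs\tau_2(x):\bs T_{k,l}$ (again from \eqref{eq:TNprop1}), bound via Cauchy--Schwarz using $\|\bs T_{k,l}\|=1$, and conversely estimate $\|\bs\tau_2(x)\|\le \sum|q_{ij}(x)|\,\|\bs N_{i,j}\|\lesssim \sum|q_{ij}(x)|$ using the second part of \eqref{eq:TNprop2}. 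Integration then gives the companion equivalence for $\bs\tau_2$.

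The only substantive point requiring attention is that the hidden constants be independent of the particular simplex $K$, i.e.\ depend only on the shape-regularity parameter of $\mathcal T_h$. This is precisely what \eqref{eq:TNprop2} encodes, via the bound $\bs\nu_i^T\bs t_{i,j}\eqsim 1$ stated just before it. I therefore do not expect any genuine obstacle: conceptually the lemma simply asserts that the change-of-basis matrix between the dual bases $\{\bs T_{i,j}\}$ and $\{\bs N_{i,j}\}$ of the finite-dimensional space $\mathbb S$ has condition number uniformly bounded over all shape-regular simplices, which is an entirely algebraic fact once \eqref{eq:TNprop1}--\eqref{eq:TNprop2} are in hand.
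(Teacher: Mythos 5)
Your proof is correct, and it rests on exactly the same two ingredients as the paper's: the biorthogonality \eqref{eq:TNprop1} and the uniform size bounds \eqref{eq:TNprop2}, with constants controlled by shape regularity through $\boldsymbol{\nu}_i^T\boldsymbol{t}_{i,j}\eqsim 1$. The only structural difference is how the lower bound is organized: you extract each coefficient pointwise by pairing $\boldsymbol{\tau}_1(x)$ (resp.\ $\boldsymbol{\tau}_2(x)$) with the fixed dual tensors $\boldsymbol{N}_{k,l}$ (resp.\ $\boldsymbol{T}_{k,l}$) and then integrate, so the equivalence for $\boldsymbol{\tau}_1$ is proved independently of $\boldsymbol{\tau}_2$ and vice versa; the paper instead integrates the single pairing $\int_K\boldsymbol{\tau}_1:\boldsymbol{\tau}_2\,\dd x=\sum_{i<j}\|q_{ij}\|_{0,K}^2$ and chains it with the two upper bounds, so its lower bounds for $\boldsymbol{\tau}_1$ and $\boldsymbol{\tau}_2$ are obtained simultaneously and are coupled. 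Your decoupled, pointwise version is marginally more elementary and makes explicit the interpretation you state at the end (uniform conditioning of the change of basis between the dual bases of $\mathbb{S}$), while the paper's version is a touch shorter; both yield the stated equivalences with constants depending only on $n$ and the shape-regularity constant.
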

\begin{proof}
Using the Cauchy-Schwarz inequality and \eqref{eq:TNprop2}, we have
\[
\|\boldsymbol{\tau}_1\|_{0,K}^2\leq \frac{(n+1)n}{2}\sum_{0\leq i<j\leq n}\|q_{ij}\boldsymbol{T}_{i,j}\|_{0,K}^2=\frac{(n+1)n}{2}\sum_{0\leq i<j\leq n}\|q_{ij}\|_{0,K}^2,
\]
\[
\|\boldsymbol{\tau}_2\|_{0,K}^2\leq \frac{(n+1)n}{2}\sum_{0\leq i<j\leq n}\|q_{ij}\boldsymbol{N}_{i,j}\|_{0,K}^2\lesssim\frac{(n+1)n}{2}\sum_{0\leq i<j\leq n}\|q_{ij}\|_{0,K}^2.
\]
On the other side, it follows from Cauchy-Schwarz inequality and \eqref{eq:TNprop1},
\begin{align*}
\sum_{0\leq i<j\leq n}\|q_{ij}\|_{0,K}^2=&\sum_{0\leq i<j\leq n}\int_Kq_{ij}^2\,\dd x=\sum_{0\leq i<j\leq n}\sum_{0\leq k<l\leq n}\int_Kq_{ij}q_{kl}\delta_{ik}\delta_{jl}\,\dd x \\
=&\sum_{0\leq i<j\leq n}\sum_{0\leq k<l\leq n}\int_Kq_{ij}\boldsymbol{T}_{i,j}:q_{kl}\boldsymbol{N}_{k,l}\,\dd x \\
=&\int_K\boldsymbol{\tau}_1:\boldsymbol{\tau}_2\,\dd x\leq \|\boldsymbol{\tau}_1\|_{0,K}\|\boldsymbol{\tau}_2\|_{0,K}.
\end{align*}
Hence we conclude the result by combining the last three inequalities.
\end{proof}

We then embed $\bs \varepsilon_h(\bs V_h)$ into the $\boldsymbol{H}(\mathbf{div}, K; \mathbb{S})$ bubble function space.
For each element $K\in\mathcal{T}_h$, introduce a bijective connection operator $\boldsymbol{E}_K: \boldsymbol{P}_{k-2}(K; \mathbb{S})\to\boldsymbol{B}_{K,k}$ with $k\geq 2$ as follows: for any $\boldsymbol{\tau}=\sum\limits_{0\leq i<j\leq n}q_{ij}\boldsymbol{N}_{i,j}$ with $q_{ij}\in P_{k-2}(K)$, $0\leq i<j\leq n$, define
\[
\boldsymbol{E}_K\boldsymbol{\tau}:=\sum_{0\leq i<j\leq n}\lambda_i\lambda_jq_{ij}\boldsymbol{T}_{i,j}.
\]
Applying Lemma~\ref{lem:TNnorm} and the scaling argument, we get for any $\boldsymbol{\tau}\in \boldsymbol{P}_{k-2}(K; \mathbb{S})$
\begin{equation}\label{eq:EKbound1}
\|\boldsymbol{E}_K\boldsymbol{\tau}\|_{0,K}^2\eqsim \sum_{0\leq i<j\leq n}\|\lambda_i\lambda_jq_{ij}\|_{0,K}^2\eqsim \sum_{0\leq i<j\leq n}\|q_{ij}\|_{0,K}^2\eqsim \|\boldsymbol{\tau}\|_{0,K}^2,
\end{equation}
\begin{equation}\label{eq:EKbound2}
\int_K\boldsymbol{E}_K\boldsymbol{\tau}:\boldsymbol{\tau}\,\dd x=\sum_{0\leq i<j\leq n}\int_K\lambda_i\lambda_jq_{ij}^2\,\dd x\eqsim \sum_{0\leq i<j\leq n}\|q_{ij}\|_{0,K}^2\eqsim \|\boldsymbol{\tau}\|_{0,K}^2.
\end{equation}
Denote by $\boldsymbol{E}$ the elementwise global version of $\boldsymbol{E}_K$, i.e. $\boldsymbol{E}|_K:=\boldsymbol{E}_K$ for each $K\in\mathcal{T}_h$.

%For $k\geq n$, the following inf-sup condition has been proved in Hu-Zhang~\cite{Hu2015a}:
%\begin{equation}
%\inf_{v_h\in V_h}\sup_{\tau_h \in \Sigma_h} \frac{(B_h\tau_h, v_h)}{\|\tau_h\|_{H(\div)}\|v_h\|} = \beta >0.
%\end{equation}
%The uniform coercivity holds in the kernel space
%\begin{equation}
%\alpha (M_h\sigma_h, \sigma_h) \leq (M_h^{\lambda}\sigma_h, \sigma_h), \quad \forall \sigma_h \in \Sigma_h Ap \ker(B_h)
%\end{equation}
%as $\ker(B_h)\subset \ker(B)$.

Third, we give an equivalent formulation of the mesh dependent norm $|\cdot |_{1,h}$.
For each $F\in\mathcal{F}_h$, denote by $\boldsymbol{\pi}_F$ the orthogonal projection operator from $\boldsymbol{L}^{2}(F; \mathbb{R}^n)$ onto $\boldsymbol{P}_{1}(F; \mathbb{R}^n)$. Define the broken $\bs H^1$ space as
$$\boldsymbol{H}^1(\mathcal{T}_h; \mathbb{R}^n):=\left\{\boldsymbol{v}\in \boldsymbol{L}^2(\Omega; \mathbb{R}^n): \boldsymbol{v}|_K\in \boldsymbol{H}^1(K; \mathbb{R}^n)\quad \forall\,K\in\mathcal
{T}_h\right\}. $$
The domain of mesh dependent norm $|\cdot |_{1,h}$ can be extended from $\bs V_h$ to $\boldsymbol{H}^1(\mathcal{T}_h; \mathbb{R}^n)$.
\begin{lemma}\label{lem:temp32} We have the norm equivalence:
 \begin{equation}\label{eq:discreteequivnorm}
|\boldsymbol{v}|_{1,h}^2\eqsim \|\boldsymbol{\varepsilon}_h(\boldsymbol{v})\|_0^2+\sum_{F\in\mathcal{F}_h}h_F^{-1}\|\boldsymbol{\pi}_F[\boldsymbol{v}]\|_{0,F}^2 \quad \forall~\boldsymbol{v}\in \boldsymbol{H}^1(\mathcal{T}_h; \mathbb{R}^n).
\end{equation}
\end{lemma}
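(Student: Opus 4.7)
The plan is to prove two-sided inequalities separately. The upper bound
\[
\|\boldsymbol{\varepsilon}_h(\boldsymbol{v})\|_0^2+\sum_{F\in\mathcal{F}_h}h_F^{-1}\|\boldsymbol{\pi}_F[\boldsymbol{v}]\|_{0,F}^2 \lesssim |\boldsymbol{v}|_{1,h}^2
\]
is immediate: since $\boldsymbol{\pi}_F$ is an $\boldsymbol{L}^2$ orthogonal projection, $\|\boldsymbol{\pi}_F[\boldsymbol{v}]\|_{0,F}\leq \|[\boldsymbol{v}]\|_{0,F}$, and the $\boldsymbol{\varepsilon}_h$ terms match exactly. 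So all the work goes into the reverse direction, where we must control the full face jump $\|[\boldsymbol{v}]\|_{0,F}$ by the strain plus only the linear part of the jump.

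The key idea is to split $[\boldsymbol{v}] = \boldsymbol{\pi}_F[\boldsymbol{v}] + (I-\boldsymbol{\pi}_F)[\boldsymbol{v}]$ and to absorb the $(I-\boldsymbol{\pi}_F)$ part into $\|\boldsymbol{\varepsilon}_h(\boldsymbol{v})\|_0$ by exploiting the fact that $\boldsymbol{\pi}_F$ preserves all of $\boldsymbol{P}_1(F;\mathbb{R}^n)$, and in particular the traces of rigid motions. More precisely, for each $K\in\mathcal{T}_h$ and each face $F\subset\partial K$, fix a bounded projection $\boldsymbol{R}_K\colon \boldsymbol{H}^1(K;\mathbb{R}^n)\to \boldsymbol{R}(K)$. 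Because $\boldsymbol{R}_K\boldsymbol{v}|_F\in \boldsymbol{P}_1(F;\mathbb{R}^n)$, we have $\boldsymbol{\pi}_F(\boldsymbol{R}_K\boldsymbol{v})=\boldsymbol{R}_K\boldsymbol{v}$, hence
\[
(I-\boldsymbol{\pi}_F)\boldsymbol{v}\big|_F = (I-\boldsymbol{\pi}_F)(\boldsymbol{v}-\boldsymbol{R}_K\boldsymbol{v})\big|_F,
\]
so that $\|(I-\boldsymbol{\pi}_F)\boldsymbol{v}\|_{0,F}\leq \|\boldsymbol{v}-\boldsymbol{R}_K\boldsymbol{v}\|_{0,F}$.

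Next, I would apply the standard trace inequality on $K$ to bound
\[
\|\boldsymbol{v}-\boldsymbol{R}_K\boldsymbol{v}\|_{0,F}^2 \lesssim h_K^{-1}\|\boldsymbol{v}-\boldsymbol{R}_K\boldsymbol{v}\|_{0,K}^2 + h_K\|\boldsymbol{\nabla}(\boldsymbol{v}-\boldsymbol{R}_K\boldsymbol{v})\|_{0,K}^2,
\]
and then invoke Korn's first and second inequalities on $K$ (shape regularity gives the scaling)
\[
\|\boldsymbol{\nabla}(\boldsymbol{v}-\boldsymbol{R}_K\boldsymbol{v})\|_{0,K}\lesssim \|\boldsymbol{\varepsilon}(\boldsymbol{v})\|_{0,K},\qquad
\|\boldsymbol{v}-\boldsymbol{R}_K\boldsymbol{v}\|_{0,K}\lesssim h_K\|\boldsymbol{\varepsilon}(\boldsymbol{v})\|_{0,K},
\]
since $\boldsymbol{\varepsilon}(\boldsymbol{R}_K\boldsymbol{v})=\boldsymbol{0}$. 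Combining these yields $\|(I-\boldsymbol{\pi}_F)\boldsymbol{v}\|_{0,F}^2\lesssim h_K\|\boldsymbol{\varepsilon}(\boldsymbol{v})\|_{0,K}^2$. Using $[\boldsymbol{v}]=\boldsymbol{v}^+\pm\boldsymbol{v}^-$ applied face by face, together with $h_F\eqsim h_{K^\pm}$, the contribution from the two elements adjacent to each face gives
\[
\sum_{F\in\mathcal{F}_h}h_F^{-1}\|(I-\boldsymbol{\pi}_F)[\boldsymbol{v}]\|_{0,F}^2 \lesssim \|\boldsymbol{\varepsilon}_h(\boldsymbol{v})\|_0^2.
\]
Finally, the triangle inequality $\|[\boldsymbol{v}]\|_{0,F}^2\leq 2\|\boldsymbol{\pi}_F[\boldsymbol{v}]\|_{0,F}^2 + 2\|(I-\boldsymbol{\pi}_F)[\boldsymbol{v}]\|_{0,F}^2$ delivers the desired upper bound on $|\boldsymbol{v}|_{1,h}^2$.

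The main subtlety is that only the strain, not the full gradient, appears in $|\cdot|_{1,h}$, so Korn's inequality on each simplex $K$ is essential; this is precisely why one projects onto the rigid motion space $\boldsymbol{R}(K)$ rather than onto constants. The rest is standard scaling and trace-inequality bookkeeping.
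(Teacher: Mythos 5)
Your proof is correct and follows essentially the same route as the paper: both exploit that $\boldsymbol{\pi}_F$ reproduces the $\boldsymbol{P}_1$ traces of local rigid motions, subtract an elementwise rigid-motion projection, and control the remaining jump by the elementwise strain. The only difference is that the paper cites the estimates (3.3)--(3.4) of Brenner's piecewise Korn paper for the final bound $\sum_F h_F^{-1}\|[\boldsymbol{v}-\boldsymbol{\pi}\boldsymbol{v}]\|_{0,F}^2\lesssim\|\boldsymbol{\varepsilon}_h(\boldsymbol{v})\|_0^2$, whereas you derive it explicitly via the scaled trace inequality and element-level Korn estimates (which, strictly speaking, require the rigid-motion projection to be chosen as in Brenner, or a quotient-space Korn argument, rather than an arbitrary bounded projection).
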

\begin{proof}
For any element $K\in\mathcal{T}_h$, let $\boldsymbol{\pi}_K$ be an interpolation
operator from $\boldsymbol{H}_{1}(K; \mathbb{R}^n)$ onto $\boldsymbol{R}(K)$ defined by (3.1)-(3.2) in \cite{Brenner2004a}.
And let $\boldsymbol{\pi}$ be the elementwise global version of $\boldsymbol{\pi}_K$, i.e. $\boldsymbol{\pi}|_K:=\boldsymbol{\pi}_K$ for each $K\in\mathcal{T}_h$. It follows from (3.3)-(3.4) in \cite{Brenner2004a} that for any $\boldsymbol{v}\in \boldsymbol{H}^1(\mathcal{T}_h; \mathbb{R}^n)$,
\begin{align}
\sum_{F\in\mathcal{F}_h}h_F^{-1}\|[\boldsymbol{v}]-\boldsymbol{\pi}_F[\boldsymbol{v}]\|_{0,F}^2
=&\sum_{F\in\mathcal{F}_h}h_F^{-1}\|[\boldsymbol{v}-\boldsymbol{\pi}\boldsymbol{v}]-\boldsymbol{\pi}_F[\boldsymbol{v}-\boldsymbol{\pi}\boldsymbol{v}]\|_{0,F}^2 \notag \\
\leq &\sum_{F\in\mathcal{F}_h}h_F^{-1}\|[\boldsymbol{v}-\boldsymbol{\pi}\boldsymbol{v}]\|_{0,F}^2
\lesssim \|\boldsymbol{\varepsilon}_h(\boldsymbol{v})\|_0^2. \label{eq:piFestimate}
\end{align}
Then the equivalence \eqref{eq:discreteequivnorm} follows from the triangle inequality.
\end{proof}

We shall also use the following discrete Korn's inequality (cf. (1.22) in \cite{Brenner2004a} and (34) in \cite{ArnoldBrezziMarini2005})
\begin{equation}\label{eq:korn}
\|\boldsymbol{\nabla}_h\boldsymbol{v}\|_{0}^2+\|\boldsymbol{v}\|_0^2\lesssim \|\boldsymbol{\varepsilon}_h(\boldsymbol{v})\|_0^2+\sum_{F\in\mathcal{F}_h}h_F^{-1}\|\boldsymbol{\pi}_F[\boldsymbol{v}]\|_{0,F}^2 \quad \forall~\boldsymbol{v}\in \boldsymbol{H}^1(\mathcal{T}_h; \mathbb{R}^n).
\end{equation}
Together with \eqref{eq:discreteequivnorm}, we conclude $|\cdot|_{1,h}$ defines a norm on $\bs V_h$.

\subsection{inf-sup condition in mesh dependent norms}
The inf-sup condition we need is actually for the subspace $\hat{\bs \Sigma}_h$ with vanished mean trace, c.f., \eqref{eq:infsup22} below. It is obvious that inf-sup condition \eqref{eq:infsup22} implies inf-sup condition \eqref{eq:infsup2}. On the other hand, if inf-sup condition \eqref{eq:infsup2} is true, then \eqref{eq:infsup22} holds by taking $\hat{\boldsymbol{\tau}}_h=\boldsymbol{\tau}_h-(\frac{1}{n}\int_{\Omega}\textrm{tr}\boldsymbol{\tau}_h\,\dd x) \boldsymbol{\delta}$.
Therefore inf-sup conditions \eqref{eq:infsup2} and \eqref{eq:infsup22} are equivalent.
\begin{lemma}\label{lem:infsup22}
For $k\geq n+1$, we have the following inf-sup condition
\begin{equation}\label{eq:infsup22}
|\boldsymbol{v}_h|_{1,h}\lesssim \sup_{\hat{\boldsymbol{\tau}}_h \in \hat{\boldsymbol{\Sigma}}_{h}} \frac{b(\hat{\boldsymbol{\tau}}_h, \boldsymbol{v}_h)}{\|\hat{\boldsymbol{\tau}}_h\|_{0,h}},
\end{equation}
 for any $\boldsymbol{v}_h\in \boldsymbol{V}_{h}$.
\end{lemma}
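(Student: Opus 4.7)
The plan is to produce, for each $\boldsymbol{v}_h \in \boldsymbol{V}_h$, an explicit tensor $\boldsymbol{\tau}_h \in \boldsymbol{\Sigma}_h$ with $b(\boldsymbol{\tau}_h, \boldsymbol{v}_h) \gtrsim |\boldsymbol{v}_h|_{1,h}^2$ and $\|\boldsymbol{\tau}_h\|_{0,h} \lesssim |\boldsymbol{v}_h|_{1,h}$; the paragraph immediately preceding the lemma reduces \eqref{eq:infsup22} to exactly this. I would exploit the decomposition $\boldsymbol{\Sigma}_h = \widetilde{\boldsymbol{\Sigma}}_{k,h} + \boldsymbol{B}_{k,h}$ and take $\boldsymbol{\tau}_h = \alpha \boldsymbol{\tau}_1 + \boldsymbol{\tau}_2$, where $\boldsymbol{\tau}_1 \in \boldsymbol{B}_{k,h}$ recovers the volumetric $\boldsymbol{\varepsilon}_h(\boldsymbol{v}_h)$-part of the norm and $\boldsymbol{\tau}_2 \in \widetilde{\boldsymbol{\Sigma}}_{k,h}$ produces the face jump-part, with $\alpha$ fixed at the end to absorb a cross term. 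The working tool is the integration-by-parts identity
\[
b(\boldsymbol{\tau}, \boldsymbol{v}_h) \;=\; -\int_\Omega \boldsymbol{\tau}:\boldsymbol{\varepsilon}_h(\boldsymbol{v}_h)\dd x + \sum_{F\in\mathcal{F}_h}\int_F (\boldsymbol{\tau}\boldsymbol{\nu}_F)\cdot[\boldsymbol{v}_h]\dd s,
\]
valid for every $\boldsymbol{\tau}\in\boldsymbol{\Sigma}_h$ because its normal trace is single-valued across each $F$. Lemma~\ref{lem:temp32} permits me to aim for the equivalent norm $\|\boldsymbol{\varepsilon}_h(\boldsymbol{v}_h)\|_0^2 + \sum_F h_F^{-1}\|\boldsymbol{\pi}_F[\boldsymbol{v}_h]\|_{0,F}^2$.

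For $\boldsymbol{\tau}_1$ I would expand $\boldsymbol{\varepsilon}_h(\boldsymbol{v}_h)|_K \in \boldsymbol{P}_{k-2}(K;\mathbb{S})$ in the dual basis $\{\boldsymbol{N}_{i,j}\}$ and set $\boldsymbol{\tau}_1|_K := -\boldsymbol{E}_K(\boldsymbol{\varepsilon}_h(\boldsymbol{v}_h)|_K) \in \boldsymbol{B}_{K,k}$. Since $\boldsymbol{\tau}_1\boldsymbol{\nu}|_{\partial K} = \boldsymbol{0}$ the face contribution in the identity above disappears, and \eqref{eq:EKbound2} yields $b(\boldsymbol{\tau}_1,\boldsymbol{v}_h) \eqsim \|\boldsymbol{\varepsilon}_h(\boldsymbol{v}_h)\|_0^2$, while \eqref{eq:EKbound1} combined with $\|\boldsymbol{\tau}_1\boldsymbol{\nu}_F\|_{0,F}=0$ gives $\|\boldsymbol{\tau}_1\|_{0,h} \lesssim \|\boldsymbol{\varepsilon}_h(\boldsymbol{v}_h)\|_0$. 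This step is routine once the dual basis and $\boldsymbol{E}_K$ are in hand.

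The hard part is $\boldsymbol{\tau}_2$: I need $\boldsymbol{\tau}_2 \in \widetilde{\boldsymbol{\Sigma}}_{k,h}$ such that
\[
\int_F (\boldsymbol{\tau}_2\boldsymbol{\nu}_F)\cdot\boldsymbol{w}\dd s = h_F^{-1}\int_F \boldsymbol{\pi}_F[\boldsymbol{v}_h]\cdot\boldsymbol{w}\dd s\quad\forall\,\boldsymbol{w}\in\boldsymbol{P}_1(F;\mathbb{R}^n),\ F\in\mathcal{F}_h,
\]
together with the scaling bound $\|\boldsymbol{\tau}_2\|_{0,h}^2 \lesssim \sum_F h_F^{-1}\|\boldsymbol{\pi}_F[\boldsymbol{v}_h]\|_{0,F}^2$. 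My plan is to build $\boldsymbol{\tau}_2$ face by face using the face-based degrees of freedom of the continuous symmetric $\boldsymbol{P}_k$ element: for every $F$ prescribe the normal-trace moments against $\boldsymbol{P}_1(F;\mathbb{R}^n)$ to match the right-hand side and set every other vertex/edge/face/interior DOF to zero. The assumption $k\geq n+1$ is exactly what guarantees enough independent face DOFs to do this (it is the local surjectivity behind the standard inf-sup \eqref{eq:infsup1}); the desired scaling estimate then follows from a reference-element argument and the shape regularity, since the prescribed data on $F$ has size $h_F^{-1}\|\boldsymbol{\pi}_F[\boldsymbol{v}_h]\|_{0,F}$.

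With the two pieces in hand, $b(\boldsymbol{\tau}_2,\boldsymbol{v}_h) = -(\boldsymbol{\tau}_2,\boldsymbol{\varepsilon}_h(\boldsymbol{v}_h)) + \sum_F h_F^{-1}\|\boldsymbol{\pi}_F[\boldsymbol{v}_h]\|_{0,F}^2$; the cross term is controlled by Cauchy--Schwarz and Young's inequality,
\[
|(\boldsymbol{\tau}_2,\boldsymbol{\varepsilon}_h(\boldsymbol{v}_h))| \leq C\|\boldsymbol{\varepsilon}_h(\boldsymbol{v}_h)\|_0^2 + \tfrac{1}{2}\sum_F h_F^{-1}\|\boldsymbol{\pi}_F[\boldsymbol{v}_h]\|_{0,F}^2.
\]
Fixing $\alpha$ large enough relative to $C$ and the constant from the $\boldsymbol{\tau}_1$ estimate yields
\[
b(\alpha\boldsymbol{\tau}_1+\boldsymbol{\tau}_2,\boldsymbol{v}_h) \gtrsim \|\boldsymbol{\varepsilon}_h(\boldsymbol{v}_h)\|_0^2 + \sum_F h_F^{-1}\|\boldsymbol{\pi}_F[\boldsymbol{v}_h]\|_{0,F}^2 \eqsim |\boldsymbol{v}_h|_{1,h}^2,
\]
and the triangle inequality on $\|\cdot\|_{0,h}$ gives $\|\alpha\boldsymbol{\tau}_1+\boldsymbol{\tau}_2\|_{0,h} \lesssim |\boldsymbol{v}_h|_{1,h}$. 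This establishes \eqref{eq:infsup2}, hence \eqref{eq:infsup22}.
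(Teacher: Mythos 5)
Your construction is essentially the paper's: a divergence bubble $\boldsymbol{E}\boldsymbol{\varepsilon}_h(\boldsymbol{v}_h)$ (up to sign) for the strain part, a stress $\boldsymbol{\tau}_2$ defined through the Hu--Zhang degrees of freedom whose only nonzero DOFs are the face moments of $\boldsymbol{\tau}_2\boldsymbol{\nu}_F$ against $\boldsymbol{P}_1(F;\mathbb{R}^n)$ matched to $h_F^{-1}[\boldsymbol{v}_h]$, absorption of cross terms by Young's inequality, and the norm equivalence of Lemma~\ref{lem:temp32} to conclude, followed by subtraction of the mean trace to land in $\hat{\boldsymbol{\Sigma}}_h$.

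One step is overstated. You write $b(\boldsymbol{\tau}_2,\boldsymbol{v}_h)=-(\boldsymbol{\tau}_2,\boldsymbol{\varepsilon}_h(\boldsymbol{v}_h))+\sum_F h_F^{-1}\|\boldsymbol{\pi}_F[\boldsymbol{v}_h]\|_{0,F}^2$, i.e.\ you use $\int_F(\boldsymbol{\tau}_2\boldsymbol{\nu}_F)\cdot[\boldsymbol{v}_h]\,{\rm d}s=h_F^{-1}\|\boldsymbol{\pi}_F[\boldsymbol{v}_h]\|_{0,F}^2$. This would require $\boldsymbol{\tau}_2\boldsymbol{\nu}_F$ to lie in (or be $L^2(F)$-orthogonal to the complement of) $\boldsymbol{P}_1(F;\mathbb{R}^n)$, which the DOF-based construction does not deliver: setting the remaining degrees of freedom to zero makes certain moments of $\boldsymbol{\tau}_2\boldsymbol{\nu}_F$ vanish, but its normal trace is still a degree-$k$ polynomial, and $[\boldsymbol{v}_h]-\boldsymbol{\pi}_F[\boldsymbol{v}_h]$ is only orthogonal to $\boldsymbol{P}_1(F)$. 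So the remainder $\sum_F\int_F(\boldsymbol{\tau}_2\boldsymbol{\nu}_F)\cdot([\boldsymbol{v}_h]-\boldsymbol{\pi}_F[\boldsymbol{v}_h])\,{\rm d}s$ must be kept; it is controlled exactly like your volume cross term, by Cauchy--Schwarz together with the scaling bound on $\boldsymbol{\tau}_2$ (as in \eqref{eq:temp1}) and the estimate \eqref{eq:piFestimate}, which is precisely how the paper's proof of \eqref{eq:temp3} proceeds. With that correction (and the attendant adjustment of constants before fixing $\alpha$), your argument goes through; note also that the paper arranges the interior DOFs so that $(\boldsymbol{\tau}_2,\boldsymbol{\varepsilon}_h(\boldsymbol{v}_h))_K=0$, whereas you keep and absorb this term, which is an acceptable, slightly more robust variant.
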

\begin{proof}
Given a $\bs v_h \in \bs V_h$, we shall construct a $\hat{\bs \tau}_h \in \hat{\bs \Sigma}_h$ to verify \eqref{eq:infsup22}.

We first control the norm $\|\boldsymbol{\varepsilon}_h(\boldsymbol{v}_h)\|_0$. For any $\boldsymbol{v}_h\in \boldsymbol{V}_{h}$, take $\boldsymbol{\tau}_1=\boldsymbol{E}\boldsymbol{\varepsilon}_h(\boldsymbol{v}_h)$.
It follows from \eqref{eq:EKbound1}
\begin{equation}\label{eq:temp4}
\|\boldsymbol{\tau}_1\|_0\eqsim \|\boldsymbol{\varepsilon}_h(\boldsymbol{v}_h)\|_0.
\end{equation}
According to integration by parts and \eqref{eq:EKbound2}, there exists a constant $C_1>0$ such that
\begin{equation}\label{eq:temp2}
b(\boldsymbol{\tau}_1, \boldsymbol{v}_h)=\int_{\Omega}\boldsymbol{\tau}_1:\boldsymbol{\varepsilon}_h(\boldsymbol{v}_h)\,\dd x\geq C_1\|\boldsymbol{\varepsilon}_h(\boldsymbol{v}_h)\|_0^2.
\end{equation}

Next we control the jump term. Choose $\boldsymbol{\tau}_2 \in \boldsymbol{\Sigma}_{h}$ such that all the degrees of freedom (cf. Lemma~2.1 in \cite{ChenHuHuang2015}) for $\boldsymbol{\tau}_2$ vanish except the following one:
\[
\int_{F}(\boldsymbol{\tau}_2\boldsymbol{\nu}_F)\cdot \boldsymbol{w}\,\dd s = h_F^{-1}\int_F [\boldsymbol{v}_h]\cdot \boldsymbol{w}\,\dd s \quad \forall~\boldsymbol{w}\in\boldsymbol{P}_{1}(F; \mathbb{R}^n) \textrm{ on each face } F.
\]
% \begin{align*}
%\int_{K}\boldsymbol{\tau}_h:\boldsymbol{\varsigma}\,\dd x &= \int_{K}\boldsymbol{\varepsilon}_h(\boldsymbol{v}_h):\boldsymbol{\varsigma}\,\dd x \quad \forall~\boldsymbol{\varsigma}\in\boldsymbol{P}_{k-2}(K; \mathbb{S}) \textrm{ on each simplex } K, \\
%\int_{F}(\boldsymbol{\tau}_h\boldsymbol{\nu}_F)\cdot \boldsymbol{w}\,\dd s & = h_F^{-1}\int_F [\boldsymbol{v}_h]\cdot \boldsymbol{w}\,\dd s \quad \forall~\boldsymbol{w}\in\boldsymbol{P}_{1}(F; \mathbb{R}^n) \textrm{ on each face } F.
%\end{align*}
Then we have
\begin{align*}
\int_{K}\boldsymbol{\tau}_2:\boldsymbol{\varepsilon}_h(\boldsymbol{v}_h)\,\dd x = 0, \quad
\int_{F}(\boldsymbol{\tau}_2\boldsymbol{\nu}_F)\cdot \boldsymbol{\pi}_F[\boldsymbol{v}_h]\,\dd s  = h_F^{-1}\|\boldsymbol{\pi}_F[\boldsymbol{v}_h]\|_{0,F}^2,
\end{align*}
\begin{equation}\label{eq:temp1}
\|\boldsymbol{\tau}_2\|_0^2\lesssim \sum_{F\in\mathcal{T}_h}h_F^{-1}\|\boldsymbol{\pi}_F[\boldsymbol{v}_h]\|_{0,F}^2.
\end{equation}
Thus by \eqref{eq:piFestimate} and \eqref{eq:temp1}, there exists a constant $C_2>0$ such that
\begin{align}
b(\boldsymbol{\tau}_2, \boldsymbol{v}_h) & = \sum_{F\in\mathcal{F}_h} \int_{F}(\boldsymbol{\tau}_2\boldsymbol{\nu}_F)\cdot [\boldsymbol{v}_h]\,\dd s \notag\\
& = \sum_{F\in\mathcal{F}_h} \int_{F}(\boldsymbol{\tau}_2\boldsymbol{\nu}_F)\cdot ([\boldsymbol{v}_h]-\boldsymbol{\pi}_F[\boldsymbol{v}_h])\,\dd s + \sum_{F\in\mathcal{F}_h}h_F^{-1}\|\boldsymbol{\pi}_F[\boldsymbol{v}_h]\|_{0,F}^2 \notag\\
& \geq  -C_2\|\boldsymbol{\varepsilon}_h(\boldsymbol{v}_h)\|_0^2+\frac{1}{2}\sum_{F\in\mathcal{F}_h}h_F^{-1}\|\boldsymbol{\pi}_F[\boldsymbol{v}_h]\|_{0,F}^2. \label{eq:temp3}
\end{align}

Now taking $\boldsymbol{\tau}_h=\boldsymbol{\tau}_1+\frac{C_1}{2C_2}\boldsymbol{\tau}_2$, it holds from
\eqref{eq:temp2} and \eqref{eq:temp3}
\begin{align*}
b(\boldsymbol{\tau}_h, \boldsymbol{v}_h)&=b(\boldsymbol{\tau}_1, \boldsymbol{v}_h) + \frac{C_1}{2C_2} b(\boldsymbol{\tau}_2, \boldsymbol{v}_h) \\
&\geq  \frac{C_1}{2}\|\boldsymbol{\varepsilon}_h(\boldsymbol{v}_h)\|_0^2+\frac{C_1}{4C_2} \sum_{F\in\mathcal{F}_h}h_F^{-1}\|\boldsymbol{\pi}_F[\boldsymbol{v}_h]\|_{0,F}^2.
\end{align*}
Thanks to \eqref{eq:discreteequivnorm}, we get
\[
|\boldsymbol{v}_h|_{1,h}^2\lesssim b(\boldsymbol{\tau}_h, \boldsymbol{v}_h).
\]
On the other hand, it follows from the inverse inequality, \eqref{eq:temp4} and \eqref{eq:temp1}
\[
\|\boldsymbol{\tau}_h\|_{0,h}\lesssim \|\boldsymbol{\tau}_h\|_{0}\lesssim |\boldsymbol{v}_h|_{1,h}.
\]
Finally the inf-sup condition \eqref{eq:infsup2} is the result of the last two inequalities and  consequently \eqref{eq:infsup22} holds by taking $\hat{\boldsymbol{\tau}}_h=\boldsymbol{\tau}_h-(\frac{1}{n}\int_{\Omega}\textrm{tr}\boldsymbol{\tau}_h\,\dd x) \boldsymbol{\delta}$.
\end{proof}

\subsection{Coercivity in the null space of the div operator}
Besides the inf-sup condition, another issue of the linear elasticity in the mixed form is the coercivity of bilinear form $a(\cdot,\cdot)$. On the whole space: for all $\bs \sigma \in \bs \Sigma$,
\begin{equation}\label{cor1}
a(\bs \sigma, \bs \sigma)\geq \frac{1}{n\lambda + 2\mu}\|\bs \sigma\|_0^2.
\end{equation}
The coercivity constant, unfortunately, is in the order of $\mcal O(1/\lambda)$ as $\lambda \to +\infty$. Namely it is not robust to $\lambda$. To obtain a robust coercivity, we first recall the following inequality which implies the coercivity in the null space of the div operator.

\begin{lemma}[Proposition 9.1.1 in \cite{BoffiBrezziFortin2013}]\label{lem:temp1}
For $\boldsymbol{\tau}\in\boldsymbol{H}(\mathbf{div}, \Omega; \mathbb{S})$ satisfying $\int_{\Omega}\textrm{tr}\boldsymbol{\tau}\,\dd x=0$, we have
\[
\|\boldsymbol{\tau}\|_0\lesssim \|\boldsymbol{\tau}\|_a + \|\div \bs \tau\|_{-1},
\]
where $\|\boldsymbol{\tau}\|_a^2:=a(\boldsymbol{\tau}, \boldsymbol{\tau})$ and $\|\div \bs \tau\|_{-1} = \sup_{\boldsymbol{v}\in \boldsymbol{H}_0^1(\Omega; \mathbb{R}^n)}b(\boldsymbol{\tau}, \boldsymbol{v})/|\boldsymbol{v}|_1$.
\end{lemma}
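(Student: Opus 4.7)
The plan is to split $\boldsymbol{\tau}$ into its deviatoric and spherical parts, $\boldsymbol{\tau} = \boldsymbol{\tau}^D + \tfrac{1}{n}(\textrm{tr}\,\boldsymbol{\tau})\boldsymbol{\delta}$, and control each piece separately. A direct computation using the definition of $\mathfrak{A}$ yields
\[
\|\boldsymbol{\tau}\|_a^2 = \frac{1}{2\mu}\|\boldsymbol{\tau}^D\|_0^2 + \frac{1}{n(n\lambda+2\mu)}\|\textrm{tr}\,\boldsymbol{\tau}\|_0^2,
\]
so the $a$-norm controls the deviator uniformly in $\lambda$: $\|\boldsymbol{\tau}^D\|_0 \lesssim \|\boldsymbol{\tau}\|_a$. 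Since $\|\boldsymbol{\tau}\|_0^2 = \|\boldsymbol{\tau}^D\|_0^2 + \tfrac{1}{n}\|\textrm{tr}\,\boldsymbol{\tau}\|_0^2$, the remaining task is to bound $\|\textrm{tr}\,\boldsymbol{\tau}\|_0$ by $\|\boldsymbol{\tau}^D\|_0 + \|\div\boldsymbol{\tau}\|_{-1}$.

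The hypothesis $\int_\Omega \textrm{tr}\,\boldsymbol{\tau}\,\dd x = 0$ enters precisely at this point. The classical surjectivity of the divergence operator $\div:\boldsymbol{H}_0^1(\Omega;\mathbb{R}^n)\to L_0^2(\Omega)$ supplies a $\boldsymbol{v}\in\boldsymbol{H}_0^1(\Omega;\mathbb{R}^n)$ with $\div\boldsymbol{v}=\textrm{tr}\,\boldsymbol{\tau}$ and $|\boldsymbol{v}|_1 \lesssim \|\textrm{tr}\,\boldsymbol{\tau}\|_0$. Since $\boldsymbol{\tau}$ is symmetric, integration by parts gives $\int_\Omega \boldsymbol{\tau}:\boldsymbol{\varepsilon}(\boldsymbol{v})\,\dd x = -b(\boldsymbol{\tau},\boldsymbol{v})$; substituting the deviatoric-spherical decomposition on the left-hand side and rearranging yields
\[
\tfrac{1}{n}\|\textrm{tr}\,\boldsymbol{\tau}\|_0^2 = -b(\boldsymbol{\tau},\boldsymbol{v}) - (\boldsymbol{\tau}^D,\boldsymbol{\varepsilon}(\boldsymbol{v})).
\]
Bounding the right-hand side by $(\|\div\boldsymbol{\tau}\|_{-1} + \|\boldsymbol{\tau}^D\|_0)|\boldsymbol{v}|_1$ via the definition of the $-1$ norm and Cauchy--Schwarz, and inserting $|\boldsymbol{v}|_1 \lesssim \|\textrm{tr}\,\boldsymbol{\tau}\|_0$, produces $\|\textrm{tr}\,\boldsymbol{\tau}\|_0 \lesssim \|\div\boldsymbol{\tau}\|_{-1} + \|\boldsymbol{\tau}^D\|_0$. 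Combining with the deviator estimate finishes the argument.

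The main obstacle is the divergence-surjectivity step. It is nontrivial and relies on the geometry of $\Omega$ through the Bogovskii construction, or equivalently the Stokes inf-sup condition for the pair $(\boldsymbol{H}_0^1,L_0^2)$. For the bounded polytopal domains assumed here this is classical, but it is where the hidden constant in the lemma acquires its domain dependence. All other steps are routine orthogonal decomposition, Cauchy--Schwarz, and integration by parts.
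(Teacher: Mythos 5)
Your proof is correct: the computation $\|\boldsymbol{\tau}\|_a^2=\frac{1}{2\mu}\|\boldsymbol{\tau}^D\|_0^2+\frac{1}{n(n\lambda+2\mu)}\|\textrm{tr}\,\boldsymbol{\tau}\|_0^2$ is exact, and the trace bound via a $\boldsymbol{v}\in\boldsymbol{H}_0^1(\Omega;\mathbb{R}^n)$ with $\div\boldsymbol{v}=\textrm{tr}\,\boldsymbol{\tau}$, $|\boldsymbol{v}|_1\lesssim\|\textrm{tr}\,\boldsymbol{\tau}\|_0$ goes through as you describe, with all constants independent of $\lambda$. The paper gives no proof of its own here—it simply cites Proposition 9.1.1 of Boffi--Brezzi--Fortin—and your argument (deviatoric/spherical splitting plus the divergence surjectivity, i.e.\ the Stokes inf-sup condition, which is where the zero-mean-trace hypothesis is used) is essentially the classical proof given in that reference.
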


We then move to the discrete case. Define discrete norms
\begin{align*}
\|\div \bs \tau\|_{-1,h}  &:=  \sup_{\boldsymbol{v}_h\in \boldsymbol{V}_{h}}\frac{b(\boldsymbol{\tau}, \boldsymbol{v}_h)}{|\boldsymbol{v}_h|_{1,h}},\\
\|h\div \bs \tau\|^2  &:=  \sum_{K\in\mathcal{T}_h}h_K^2\|\mathbf{div}\boldsymbol{\tau}\|_{0,K}^2.
\end{align*}
Let $\boldsymbol{Q}_h^{k-1}$ be the $L^2$ orthogonal projection from $\boldsymbol{L}^2(\Omega; \mathbb{R}^n)$ onto $\boldsymbol{V}_{h}$, which will be abbreviated as $\boldsymbol{Q}_h$. It holds the following error estimate (cf. \cite{Ciarlet1978, BrennerScott2008})
\begin{equation}\label{eq:L2projErrorEstimate}
\|\boldsymbol{v}-\boldsymbol{Q}_h\boldsymbol{v}\|_{0,K}+h_K^{1/2}\|\boldsymbol{v}-\boldsymbol{Q}_h\boldsymbol{v}\|_{0,\partial K}\lesssim h_K^{\min\{k,m\}}|\boldsymbol{v}|_{m,K} \quad \forall~\boldsymbol{v}\in \boldsymbol{H}^{m}(\Omega; \mathbb{R}^n)
\end{equation}
with integer $m\geq1$.

\begin{lemma}\label{lem:temp2}
For any $\boldsymbol{\tau}\in\boldsymbol{H}(\mathbf{div}, \Omega; \mathbb{S})$ satisfying $\int_{\Omega}\textrm{tr}\boldsymbol{\tau}\,\dd x=0$, we have
\[
\|\boldsymbol{\tau}\|_0\lesssim \|\boldsymbol{\tau}\|_a + \|h\div \bs \tau\| + \| \div \boldsymbol{\tau}\|_{-1,h}.
\]
\end{lemma}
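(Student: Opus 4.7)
The plan is to reduce the discrete inequality to the continuous one provided by Lemma \ref{lem:temp1}, so it suffices to establish the bound
\[
\|\div \bs\tau\|_{-1} \lesssim \|h\div\bs\tau\| + \|\div\bs\tau\|_{-1,h},
\]
from which the lemma follows immediately. By duality, fix an arbitrary $\boldsymbol{v}\in \boldsymbol{H}_0^1(\Omega;\mathbb{R}^n)$ with $|\boldsymbol{v}|_1 = 1$, and split using the $L^2$ projection $\boldsymbol{Q}_h$ onto $\boldsymbol{V}_h$:
\[
b(\boldsymbol{\tau},\boldsymbol{v}) = b(\boldsymbol{\tau},\boldsymbol{v}-\boldsymbol{Q}_h\boldsymbol{v}) + b(\boldsymbol{\tau},\boldsymbol{Q}_h\boldsymbol{v}).
\]

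For the first term, Cauchy--Schwarz together with the $L^2$ projection estimate \eqref{eq:L2projErrorEstimate} for $m=1$ gives
\[
|b(\boldsymbol{\tau},\boldsymbol{v}-\boldsymbol{Q}_h\boldsymbol{v})| \le \sum_{K}\|\mathbf{div}\boldsymbol{\tau}\|_{0,K}\|\boldsymbol{v}-\boldsymbol{Q}_h\boldsymbol{v}\|_{0,K} \lesssim \|h\div\bs\tau\|\,|\boldsymbol{v}|_1.
\]
For the second term, by definition of $\|\div\bs\tau\|_{-1,h}$,
\[
|b(\boldsymbol{\tau},\boldsymbol{Q}_h\boldsymbol{v})| \le \|\div\bs\tau\|_{-1,h}\,|\boldsymbol{Q}_h\boldsymbol{v}|_{1,h},
\]
so the entire argument reduces to the stability bound $|\boldsymbol{Q}_h\boldsymbol{v}|_{1,h} \lesssim |\boldsymbol{v}|_1$.

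This stability is the main technical point. The jump contribution is handled by exploiting $[\boldsymbol{v}]=0$ on interior faces and $\boldsymbol{v}=0$ on $\partial\Omega$, so $[\boldsymbol{Q}_h\boldsymbol{v}]=[\boldsymbol{Q}_h\boldsymbol{v}-\boldsymbol{v}]$; combining this with the boundary part of \eqref{eq:L2projErrorEstimate} yields
\[
\sum_{F\in\mathcal{F}_h}h_F^{-1}\|[\boldsymbol{Q}_h\boldsymbol{v}]\|_{0,F}^2 \lesssim \sum_{K}|\boldsymbol{v}|_{1,K}^2 = |\boldsymbol{v}|_1^2.
\]
For the symmetric-gradient contribution, I use $\|\boldsymbol{\varepsilon}_h(\boldsymbol{Q}_h\boldsymbol{v})\|_{0,K}\le\|\boldsymbol{\nabla}(\boldsymbol{Q}_h\boldsymbol{v})\|_{0,K}$ and the fact that $\boldsymbol{Q}_h$ preserves constants (since constants belong to $\boldsymbol{V}_h$). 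Writing $\bar{\boldsymbol{v}}_K$ for the elementwise mean, an inverse inequality followed by the Poincar\'e inequality gives
\[
\|\boldsymbol{\nabla}\boldsymbol{Q}_h\boldsymbol{v}\|_{0,K} = \|\boldsymbol{\nabla}\boldsymbol{Q}_h(\boldsymbol{v}-\bar{\boldsymbol{v}}_K)\|_{0,K} \lesssim h_K^{-1}\|\boldsymbol{v}-\bar{\boldsymbol{v}}_K\|_{0,K} \lesssim |\boldsymbol{v}|_{1,K}.
\]
Summing and combining with the jump bound yields $|\boldsymbol{Q}_h\boldsymbol{v}|_{1,h}\lesssim|\boldsymbol{v}|_1$, which completes the dual estimate. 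Taking the supremum over $\boldsymbol{v}$ and invoking Lemma \ref{lem:temp1} finishes the proof. The principal obstacle is the broken $H^1$-type stability of the $L^2$ projection in the mesh-dependent norm; everything else is Cauchy--Schwarz plus standard interpolation estimates.
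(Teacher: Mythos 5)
Your proposal is correct and follows essentially the same route as the paper: split $b(\boldsymbol{\tau},\boldsymbol{v})$ via the $L^2$ projection $\boldsymbol{Q}_h$, absorb the projection error into $\|h\div\boldsymbol{\tau}\|$, bound the remaining term by $\|\div\boldsymbol{\tau}\|_{-1,h}\,|\boldsymbol{Q}_h\boldsymbol{v}|_{1,h}$ with the stability $|\boldsymbol{Q}_h\boldsymbol{v}|_{1,h}\lesssim|\boldsymbol{v}|_1$, and conclude with Lemma \ref{lem:temp1}. The only (harmless) difference is that the paper first reduces to $k=1$, where $\boldsymbol{\varepsilon}_h(\boldsymbol{Q}_h\boldsymbol{v})=\boldsymbol{0}$ and only the jump term needs estimating, whereas you prove the elementwise gradient bound directly for general $k$ via the inverse and Poincar\'e inequalities.
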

\begin{proof}
It is sufficient to prove the case $k=1$. Let $\boldsymbol{v}\in \boldsymbol{H}_0^1(\Omega; \mathbb{R}^n)$, then it follows from the Cauchy-Schwarz inequality and \eqref{eq:L2projErrorEstimate}
\begin{align*}
b(\boldsymbol{\tau}, \boldsymbol{v})&=b(\boldsymbol{\tau}, \boldsymbol{v}-\boldsymbol{Q}_h\boldsymbol{v})+b(\boldsymbol{\tau}, \boldsymbol{Q}_h\boldsymbol{v}) \\
& \lesssim \|h\div \bs \tau\||\boldsymbol{v}|_1+b(\boldsymbol{\tau}, \boldsymbol{Q}_h\boldsymbol{v}).
\end{align*}
Again by \eqref{eq:L2projErrorEstimate}, it holds
\begin{equation}\label{eq:temp9}
|\boldsymbol{Q}_h\boldsymbol{v}|_{1,h}^2=\sum_{F\in \mathcal F_h} h_F^{-1}\|[\boldsymbol{Q}_h\boldsymbol{v}]\|_{0,F}^2=\sum_{F\in \mathcal F_h} h_F^{-1}\|[\boldsymbol{Q}_h\boldsymbol{v}-\boldsymbol{v}]\|_{0,F}^2\lesssim |\boldsymbol{v}|_1^2.
\end{equation}
Hence we get from the last two inequalities
\[
\|\div \bs \tau\|_{-1} = \sup_{\boldsymbol{v}\in \boldsymbol{H}_0^1(\Omega; \mathbb{R}^n)}\frac{b(\boldsymbol{\tau}, \boldsymbol{v})}{|\boldsymbol{v}|_1}\lesssim \|h\div \bs \tau\| + \sup_{\boldsymbol{v}_h\in \boldsymbol{V}_{h}}\frac{b(\boldsymbol{\tau}, \boldsymbol{v}_h)}{|\boldsymbol{v}_h|_{1,h}}.
\]
Therefore we can end the proof by using Lemma~\ref{lem:temp1}.
\end{proof}

\subsection{Stability in mesh dependent norms}
We now present stability in mesh dependent norms. For $k\geq n+1$, since there is no stabilization term and $\div \bs \Sigma_h \subset \bs V_h$, then $\ker(\div)\cap \bs \Sigma_h \subset \ker(\div)\cap \bs \Sigma$. The stability follows from  Lemma~\ref{lem:temp1} and inf-sup condition \eqref{eq:infsup22}.
\begin{theorem}
For $k\geq n+1$, it follows for any $(\widetilde{\boldsymbol{\sigma}}_h, \widetilde{\boldsymbol{u}}_h)\in \hat{\boldsymbol{\Sigma}}_{h}\times \boldsymbol{V}_{h}$,
\begin{equation}\label{eq:infsup3}
\|\widetilde{\boldsymbol{\sigma}}_h\|_{0,h} + |\widetilde{\boldsymbol{u}}_h|_{1,h}\lesssim \sup_{(\boldsymbol{\tau}_h, \boldsymbol{v}_h)\in \hat{\boldsymbol{\Sigma}}_{h}\times \boldsymbol{V}_{h}}\frac{\mathbb A(\widetilde{\boldsymbol{\sigma}}_h, \widetilde{\boldsymbol{u}}_h; \boldsymbol{\tau}_h, \boldsymbol{v}_h)}{\|\boldsymbol{\tau}_h\|_{0,h} + |\boldsymbol{v}_h|_{1,h}}.
\end{equation}
\end{theorem}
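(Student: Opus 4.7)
The plan is to recognise \eqref{eq:infsup3} as the standard Brezzi stability for the saddle-point pair $(a,b)$ on $\hat{\bs\Sigma}_h\times\bs V_h$ equipped with the mesh-dependent norms $\|\cdot\|_{0,h}\times|\cdot|_{1,h}$. Since the assumption $k\geq n+1$ forces $\eta=0$ and hence $c\equiv 0$, the bilinear form reduces to $\mathbb{A}(\bs\sigma,\bs u;\bs\tau,\bs v)=a(\bs\sigma,\bs\tau)+b(\bs\tau,\bs u)+b(\bs\sigma,\bs v)$, so I just need to verify the three Brezzi hypotheses—boundedness of $a$ and $b$, inf-sup for $b$, and coercivity of $a$ on the kernel of $b$—in the chosen norms, and then apply the abstract saddle-point theorem.

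Boundedness of $a$ is immediate from the uniform bound $\|\mathfrak{A}\bs\sigma\|_0\lesssim\|\bs\sigma\|_0$, since the coefficient $\lambda/(n\lambda+2\mu)$ is bounded by $1/n$ uniformly in $\lambda$; this gives $a(\bs\sigma,\bs\tau)\lesssim\|\bs\sigma\|_{0,h}\|\bs\tau\|_{0,h}$. Boundedness of $b$ follows from elementwise integration by parts, $b(\bs\tau_h,\bs v_h)=-(\bs\tau_h,\bs\varepsilon_h(\bs v_h))+\sum_{F\in\mcal F_h}\int_F(\bs\tau_h\bs\nu_F)\cdot[\bs v_h]\,\dd s$, and Cauchy--Schwarz with the matching $h_F^{1/2}/h_F^{-1/2}$ split between the face terms of $\|\cdot\|_{0,h}$ and $|\cdot|_{1,h}$. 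The inf-sup condition is precisely Lemma~\ref{lem:infsup22}. The core step is coercivity of $a$ on the discrete kernel $Z_h:=\{\bs\tau\in\hat{\bs\Sigma}_h:b(\bs\tau,\bs v_h)=0\text{ for all }\bs v_h\in\bs V_h\}$. Crucially, $\div\bs\Sigma_h\subset\bs V_h$ because the divergence of an elementwise $\bs P_k$ tensor is elementwise $\bs P_{k-1}$, so testing with $\bs v_h=\div\bs\tau_h$ shows that any $\bs\tau_h\in Z_h$ satisfies $\div\bs\tau_h=0$ pointwise. Combined with the zero-mean-trace property of $\hat{\bs\Sigma}_h$, Lemma~\ref{lem:temp1} delivers $\|\bs\tau_h\|_0\lesssim\|\bs\tau_h\|_a$. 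The polynomial trace inequality $h_F\|\bs\tau_h\bs\nu_F\|_{0,F}^2\lesssim\|\bs\tau_h\|_{0,K}^2$ then upgrades this to $\|\bs\tau_h\|_{0,h}^2\lesssim a(\bs\tau_h,\bs\tau_h)$ uniformly in $h$ and $\lambda$.

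With all three ingredients in hand, \eqref{eq:infsup3} follows from the textbook Brezzi test-function construction: given $(\widetilde{\bs\sigma}_h,\widetilde{\bs u}_h)$, pick $\hat{\bs\tau}_h\in\hat{\bs\Sigma}_h$ realising the inf-sup against $\widetilde{\bs u}_h$, and test $\mathbb{A}$ with $(\widetilde{\bs\sigma}_h+\delta\hat{\bs\tau}_h,-\widetilde{\bs u}_h)$ for $\delta$ small enough to dominate the cross terms via Young's inequality. The main obstacle I anticipate is the kernel-coercivity step, and specifically the identification $Z_h\subset\ker(\div)$, which is precisely where the hypothesis $k\geq n+1$—and the resulting absence of stabilisation—is used; in the low-order regime this reduction fails and one must instead deploy Lemma~\ref{lem:temp2} together with the stabiliser $c(\cdot,\cdot)$ to recover coercivity.
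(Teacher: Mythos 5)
Your verification of the Brezzi hypotheses is correct and is essentially the paper's own (very terse) argument: continuity of $a$ uniformly in $\lambda$, continuity of $b$ in the mesh-dependent norms via elementwise integration by parts, the inf-sup condition of Lemma~\ref{lem:infsup22}, and coercivity of $a$ on the discrete kernel, which for $k\geq n+1$ reduces to Lemma~\ref{lem:temp1} because $\mathbf{div}\,\bs\Sigma_h\subset\bs V_h$ forces $\mathbf{div}\,\bs\tau_h=\bs 0$ for kernel elements, combined with the zero mean trace in $\hat{\bs\Sigma}_h$ and the equivalence \eqref{sigmamass}. This is exactly what the paper leaves implicit, so in substance you are on the same route.

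One caution about your concluding sentence. The test pair $(\widetilde{\bs\sigma}_h+\delta\hat{\bs\tau}_h,\,-\widetilde{\bs u}_h)$ is the construction appropriate when $a$ is coercive on the \emph{whole} space in the $\Sigma$-norm; here, writing $\beta$ for the right-hand side of \eqref{eq:infsup3}, it only yields $\|\widetilde{\bs\sigma}_h\|_a^2+|\widetilde{\bs u}_h|_{1,h}^2\lesssim \beta\bigl(\|\widetilde{\bs\sigma}_h\|_{0,h}+|\widetilde{\bs u}_h|_{1,h}\bigr)$, and since $\|\cdot\|_a$ can be weaker than $\|\cdot\|_{0,h}$ by a factor degenerating like $\lambda^{-1/2}$ (take trace-dominated tensors), this alone does not control $\|\widetilde{\bs\sigma}_h\|_{0,h}$ robustly in $\lambda$; indeed $\widetilde{\bs\sigma}_h$ need not lie in the kernel where your coercivity lives. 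So you must genuinely invoke the abstract saddle-point theorem (decomposing $\widetilde{\bs\sigma}_h$ into its kernel component, handled by the kernel coercivity, and its complement, handled by the inf-sup of $b$), or add one explicit step: since $\mathbf{div}\,\widetilde{\bs\sigma}_h\in\bs V_h$, one has $\sup_{\bs v_h\in\bs V_h} b(\widetilde{\bs\sigma}_h,\bs v_h)/|\bs v_h|_{1,h}\leq\beta$ and, testing with $\bs v_h|_K=h_K^2\mathbf{div}\,\widetilde{\bs\sigma}_h$, also $\|h\div\widetilde{\bs\sigma}_h\|\lesssim\beta$, whence Lemma~\ref{lem:temp2} (or Lemma~\ref{lem:temp1} directly, after bounding $\|\div\widetilde{\bs\sigma}_h\|_{-1}$ by these two quantities) gives $\|\widetilde{\bs\sigma}_h\|_0\lesssim\|\widetilde{\bs\sigma}_h\|_a+\beta$ and closes the argument; this is exactly how the paper completes the low-order case \eqref{eq:infsup5}. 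With either repair your proof is complete; as literally written, the final test-function sentence by itself would not deliver the $\lambda$-robust bound on $\|\widetilde{\bs\sigma}_h\|_{0,h}$.
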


\begin{corollary}
Let $k\geq n+1$.
Assume that $\boldsymbol{\sigma}\in\boldsymbol{H}^{k+1}(\Omega; \mathbb{S})$ and $\boldsymbol{u}\in\boldsymbol{H}^{k}(\Omega; \mathbb{R}^n)$, then
\begin{equation}\label{eq:errorestimate1}
\|\boldsymbol{\sigma}-\boldsymbol{\sigma}_h\|_{0,h} + |\boldsymbol{Q}_h\boldsymbol{u}-\boldsymbol{u}_h|_{1,h}
\lesssim h^{k+1}\|\boldsymbol{\sigma}\|_{k+1},
\end{equation}
\begin{equation}\label{eq:errorestimate2}
|\boldsymbol{u}-\boldsymbol{u}_h|_{1,h}
\lesssim h^{k-1}\left (\|\boldsymbol{\sigma}\|_{k+1}+\|\boldsymbol{u}\|_{k}\right ).
\end{equation}
Moreover, when $\Omega$ is convex, we have
\begin{equation}\label{eq:errorestimate3}
\|\boldsymbol{Q}_h\boldsymbol{u}-\boldsymbol{u}_h\|_{0}
\lesssim h^{k+2}\|\boldsymbol{\sigma}\|_{k+1}.
\end{equation}
\end{corollary}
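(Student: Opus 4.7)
The plan is to establish \eqref{eq:errorestimate1} by applying the mesh-dependent stability \eqref{eq:infsup3} to a suitably chosen pair. Let $\bs{\Pi}_h: \bs{\Sigma}\to\bs{\Sigma}_h$ be the canonical Hu--Zhang interpolant (as constructed in \cite{Hu2015a,HuZhang2015}), which satisfies the commuting relation $\div\bs{\Pi}_h = \bs{Q}_h\div$ and the optimal approximation bound $\|\bs{\sigma}-\bs{\Pi}_h\bs{\sigma}\|_0\lesssim h^{k+1}|\bs{\sigma}|_{k+1}$. I would apply \eqref{eq:infsup3} with $\widetilde{\bs{\sigma}}_h := \bs{\Pi}_h\bs{\sigma}-\bs{\sigma}_h$ (shifted by a multiple of $\bs{\delta}$ if needed to land in $\hat{\bs{\Sigma}}_h$; since $\div\bs{\delta}=\bs{0}$ and $a(\bs{\delta},\bs{\tau}_h)=0$ for test fields with zero mean trace, this shift is inert) and $\widetilde{\bs{u}}_h := \bs{Q}_h\bs{u}-\bs{u}_h$. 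Combining the Galerkin orthogonality with (i) $b(\bs{\tau}_h,\bs{u}-\bs{Q}_h\bs{u})=(\div\bs{\tau}_h,\bs{u}-\bs{Q}_h\bs{u})=0$ since $\div\bs{\tau}_h\in\bs{V}_h$, and (ii) $b(\bs{\Pi}_h\bs{\sigma}-\bs{\sigma},\bs{v}_h)=(\bs{Q}_h\div\bs{\sigma}-\div\bs{\sigma},\bs{v}_h)=0$ from the commuting diagram, the whole form $\mathbb{A}(\widetilde{\bs{\sigma}}_h,\widetilde{\bs{u}}_h;\bs{\tau}_h,\bs{v}_h)$ collapses to the single term $a(\bs{\Pi}_h\bs{\sigma}-\bs{\sigma},\bs{\tau}_h)$, which is controlled $\lambda$-uniformly by $\|\bs{\Pi}_h\bs{\sigma}-\bs{\sigma}\|_0\|\bs{\tau}_h\|_{0,h}/\mu$. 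Hence $\|\bs{\Pi}_h\bs{\sigma}-\bs{\sigma}_h\|_{0,h}+|\bs{Q}_h\bs{u}-\bs{u}_h|_{1,h}\lesssim h^{k+1}|\bs{\sigma}|_{k+1}$, and a triangle inequality together with a standard trace-plus-interpolation estimate on $\|\bs{\sigma}-\bs{\Pi}_h\bs{\sigma}\|_{0,h}$ yields \eqref{eq:errorestimate1}.

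For \eqref{eq:errorestimate2}, I would split $|\bs{u}-\bs{u}_h|_{1,h}\le|\bs{u}-\bs{Q}_h\bs{u}|_{1,h}+|\bs{Q}_h\bs{u}-\bs{u}_h|_{1,h}$. The first summand is controlled by $h^{k-1}\|\bs{u}\|_k$: since $[\bs{u}]=\bs{0}$ across interior faces, the jump contribution reduces to $\sum_F h_F^{-1}\|[\bs{u}-\bs{Q}_h\bs{u}]\|_{0,F}^2$, and both this and the broken strain term are bounded via the face part of \eqref{eq:L2projErrorEstimate} together with the standard $L^2$-projection $H^1$ error. The second summand is already dominated by $h^{k+1}\|\bs{\sigma}\|_{k+1}$ from the previous step.

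For the superconvergence \eqref{eq:errorestimate3} on convex $\Omega$, the approach is an Aubin--Nitsche duality argument. Consider the dual problem with source $\bs{g}:=\bs{Q}_h\bs{u}-\bs{u}_h$: find $(\bs{\phi},\bs{w})\in\bs{\Sigma}\times\bs{V}$ satisfying $a(\bs{\tau},\bs{\phi})+b(\bs{\tau},\bs{w})=0$ and $b(\bs{\phi},\bs{v})=-(\bs{g},\bs{v})$, enjoying the $\lambda$-uniform full regularity $\|\bs{\phi}\|_1+\|\bs{w}\|_2\lesssim\|\bs{g}\|_0$ from convexity of $\Omega$. Testing the primal Galerkin orthogonality with $\bs{\tau}_h=\bs{\Pi}_h\bs{\phi}$ and invoking $(\bs{u}-\bs{Q}_h\bs{u})\perp\bs{V}_h$, the commuting diagram, and $\div\bs{\phi}=-\bs{g}$, one obtains
\[
\|\bs{g}\|_0^2 \;=\; a(\bs{\sigma}-\bs{\sigma}_h,\bs{\Pi}_h\bs{\phi}) \;=\; a(\bs{\sigma}-\bs{\sigma}_h,\bs{\Pi}_h\bs{\phi}-\bs{\phi}) + a(\bs{\sigma}-\bs{\sigma}_h,\bs{\phi}).
\]
The first piece is $\lesssim h^{k+1}|\bs{\sigma}|_{k+1}\cdot h|\bs{\phi}|_1$. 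For the second, substitute the dual test equation with $\bs{\tau}=\bs{\sigma}-\bs{\sigma}_h$ and use $\div\bs{\sigma}_h=\bs{Q}_h\div\bs{\sigma}$ (from the discrete div equation) to rewrite $a(\bs{\sigma}-\bs{\sigma}_h,\bs{\phi}) = -(\div\bs{\sigma}-\bs{Q}_h\div\bs{\sigma},\bs{w}-\bs{Q}_h\bs{w})$, bounded by $h^k|\div\bs{\sigma}|_k\cdot h^2|\bs{w}|_2$. Both pieces contribute $\lesssim h^{k+2}\|\bs{\sigma}\|_{k+1}\|\bs{g}\|_0$, and cancellation of $\|\bs{g}\|_0$ delivers \eqref{eq:errorestimate3}.

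The main technical obstacle is securing the $\lambda$-uniform $\bs{H}^2$ regularity for the dual elasticity problem on convex polytopes; this is classical but must be invoked with care so that the hidden constants are independent of $\lambda$. A secondary subtlety is the $\bs{\delta}$-shift used to put $\widetilde{\bs{\sigma}}_h$ into $\hat{\bs{\Sigma}}_h$, for which one must check that neither $\mathbb{A}$ nor the $\|\cdot\|_{0,h}$ norm is polluted. Finally, the optimal face-norm approximation of $\bs{\Pi}_h$ in $\|\cdot\|_{0,h}$ follows routinely from trace inequalities combined with the $L^2$-interpolation bound.
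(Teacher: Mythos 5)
Your proposal follows essentially the same route as the paper's own proof: you use the canonical Hu--Zhang interpolant with the commuting property \eqref{eq:opcommutative}, invoke the mesh-dependent inf-sup \eqref{eq:infsup3} with $\widetilde{\boldsymbol{\sigma}}_h=\boldsymbol{I}_h^{HZ}\boldsymbol{\sigma}-\boldsymbol{\sigma}_h$ and $\widetilde{\boldsymbol{u}}_h=\boldsymbol{Q}_h\boldsymbol{u}-\boldsymbol{u}_h$, reduce $\mathbb A$ to the single term $a(\boldsymbol{I}_h^{HZ}\boldsymbol{\sigma}-\boldsymbol{\sigma},\boldsymbol{\tau}_h)$ via Galerkin orthogonality and the $L^2$ orthogonalities, and finish with approximation bounds and a triangle inequality; the $L^2$ superconvergence is Aubin--Nitsche exactly as the paper outsources to \cite{DouglasRoberts1985,Stenberg1991}. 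The only place you go beyond the paper's text is in spelling out the duality argument and the $\boldsymbol{\delta}$-shift that places $\widetilde{\boldsymbol{\sigma}}_h$ in $\hat{\boldsymbol{\Sigma}}_h$, both of which are correct and in fact make explicit details the paper leaves implicit.
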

\begin{proof}
Subtracting \eqref{stabmfem1}-\eqref{stabmfem2} from \eqref{mixedform1}-\eqref{mixedform2}, we get the error equation
\begin{align}
a(\boldsymbol{\sigma}-\boldsymbol{\sigma}_h,\boldsymbol{\tau}_h)+b(\boldsymbol{\tau}_h, \boldsymbol{u}-\boldsymbol{u}_h)&=0 \quad\quad \forall\,\boldsymbol{\tau}_h\in\boldsymbol{\Sigma}_{h}, \label{erroreqn1} \\
b(\boldsymbol{\sigma}-\boldsymbol{\sigma}_h, \boldsymbol{v}_h)&=0 \quad\quad \forall\,\boldsymbol{v}_h\in \boldsymbol{V}_{h}. \label{erroreqn2}
\end{align}
Let $\boldsymbol{I}_h^{HZ}$ be the standard interpolation from $\boldsymbol{H}^1(\Omega; \mathbb{S})$ to $\boldsymbol{\Sigma}_{h}$ defined in \cite[Remark 3.1]{Hu2015a}, and it holds
\begin{equation}\label{eq:opcommutative}
\mathbf{div}(\boldsymbol{I}_h^{HZ}\boldsymbol{\sigma})=\boldsymbol{Q}_h(\mathbf{div}\boldsymbol{\sigma}).
\end{equation}
Thus we have from \eqref{erroreqn2}
\[
b(\boldsymbol{I}_h^{HZ}\boldsymbol{\sigma}-\boldsymbol{\sigma}_h, \boldsymbol{v}_h)=b(\boldsymbol{\sigma}-\boldsymbol{\sigma}_h, \boldsymbol{v}_h)=0.
\]
By the definition of $\boldsymbol{Q}_h$ and  \eqref{erroreqn1},
\[
b(\boldsymbol{\tau}_h, \boldsymbol{Q}_h\boldsymbol{u}-\boldsymbol{u}_h)=b(\boldsymbol{\tau}_h, \boldsymbol{u}-\boldsymbol{u}_h)=-a(\boldsymbol{\sigma}-\boldsymbol{\sigma}_h,\boldsymbol{\tau}_h).
\]
Combining the last two equalities, it holds
\begin{align*}
&\mathbb A(\boldsymbol{I}_h^{HZ}\boldsymbol{\sigma}-\boldsymbol{\sigma}_h, \boldsymbol{Q}_h\boldsymbol{u}-\boldsymbol{u}_h; \boldsymbol{\tau}_h, \boldsymbol{v}_h) \\
=&a(\boldsymbol{I}_h^{HZ}\boldsymbol{\sigma}-\boldsymbol{\sigma}_h,\boldsymbol{\tau}_h)+b(\boldsymbol{\tau}_h, \boldsymbol{Q}_h\boldsymbol{u}-\boldsymbol{u}_h)+b(\boldsymbol{I}_h^{HZ}\boldsymbol{\sigma}-\boldsymbol{\sigma}_h, \boldsymbol{v}_h) \\
=&a(\boldsymbol{I}_h^{HZ}\boldsymbol{\sigma}-\boldsymbol{\sigma},\boldsymbol{\tau}_h),
\end{align*}
which together with \eqref{eq:infsup3} implies
\[
\|\boldsymbol{I}_h^{HZ}\boldsymbol{\sigma}-\boldsymbol{\sigma}_h\|_{0,h} + |\boldsymbol{Q}_h\boldsymbol{u}-\boldsymbol{u}_h|_{1,h}\lesssim \|\boldsymbol{I}_h^{HZ}\boldsymbol{\sigma}-\boldsymbol{\sigma}\|_{0,h}.
\]
Therefore we will achieve \eqref{eq:errorestimate1}-\eqref{eq:errorestimate2} by using the last inequality, and the error estimate of $\boldsymbol{I}_h^{HZ}$ and $\boldsymbol{Q}_h$. The error estimate \eqref{eq:errorestimate3} can be derived by using the duality argument as in \cite{DouglasRoberts1985, Stenberg1991}.
\end{proof}
\begin{remark}\rm
The optimal convergence rate of $\|\boldsymbol{\sigma}-\boldsymbol{\sigma}_h\|_{0,h}$ has been mentioned in \cite[Remarks~3.1-3.2]{Hu2015a} and \cite[Remarks~3.6]{HuZhang2015}, but the $2$-order higher superconvergent rates of $|\boldsymbol{Q}_h\boldsymbol{u}-\boldsymbol{u}_h|_{1,h}$ and $\|\boldsymbol{Q}_h\boldsymbol{u}-\boldsymbol{u}_h\|_{0}$ are new which can be used to reconstruct a better approximation of displacement. The convergence rate of $|\boldsymbol{u}-\boldsymbol{u}_h|_{1,h}$ is also optimal.
\end{remark}

Due to the stabilization term (for inf-sup condition), our proof of the stability is more complicated for the low order case $1\leq k\leq n$.
\begin{theorem}
For $1\leq k\leq n$, it holds for any $(\widetilde{\boldsymbol{\sigma}}_h, \widetilde{\boldsymbol{u}}_h)\in \hat{\boldsymbol{\Sigma}}_{h}\times \boldsymbol{V}_{h}$ that
\begin{equation}\label{eq:infsup5}
\|\widetilde{\boldsymbol{\sigma}}_h\|_{0,h} + |\widetilde{\boldsymbol{u}}_h|_{1,h}\lesssim \sup_{(\boldsymbol{\tau}_h, \boldsymbol{v}_h)\in \hat{\boldsymbol{\Sigma}}_{h}\times \boldsymbol{V}_{h}}\frac{\mathbb A(\widetilde{\boldsymbol{\sigma}}_h, \widetilde{\boldsymbol{u}}_h; \boldsymbol{\tau}_h, \boldsymbol{v}_h)}{\|\boldsymbol{\tau}_h\|_{0,h}+ |\boldsymbol{v}_h|_{1,h}}.
\end{equation}
\end{theorem}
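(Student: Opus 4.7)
Denote by $\mathcal{S}$ the right-hand supremum in \eqref{eq:infsup5} and set $N := \|\widetilde{\boldsymbol{\sigma}}_h\|_{0,h} + |\widetilde{\boldsymbol{u}}_h|_{1,h}$. The plan is to prove $N\lesssim \mathcal{S}$ by testing $\mathbb{A}$ against three specific pairs, accumulating estimates that yield $N\lesssim \mathcal{S} + \sqrt{\mathcal{S} N}$, and closing by Young's inequality. The essential new feature compared with the higher-order case is that the stabilization $c(\cdot,\cdot)$ already dominates the jump portion of $|\cdot|_{1,h}$; this removes any need for the face-supported tensor $\boldsymbol{\tau}_2$ constructed in the proof of Lemma~\ref{lem:infsup22}, which is fortunate because $\boldsymbol{\Sigma}_h$ may not carry enough independent face degrees of freedom for such a construction at low $k$. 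The first test pair $(\widetilde{\boldsymbol{\sigma}}_h,-\widetilde{\boldsymbol{u}}_h)$ makes the cross $b$-terms cancel and delivers $\|\widetilde{\boldsymbol{\sigma}}_h\|_a^2 + \|\widetilde{\boldsymbol{u}}_h\|_c^2 = \mathbb{A}(\widetilde{\boldsymbol{\sigma}}_h,\widetilde{\boldsymbol{u}}_h;\widetilde{\boldsymbol{\sigma}}_h,-\widetilde{\boldsymbol{u}}_h) \le \mathcal{S}\, N$, whence $\|\widetilde{\boldsymbol{\sigma}}_h\|_a + \|\widetilde{\boldsymbol{u}}_h\|_c \lesssim \sqrt{\mathcal{S} N}$ and the jump portion of $|\widetilde{\boldsymbol{u}}_h|_{1,h}$ is under control.

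Next I would recover the symmetric-gradient part of $|\widetilde{\boldsymbol{u}}_h|_{1,h}$, which is needed only for $k\ge 2$ since $\boldsymbol{\varepsilon}_h\equiv \boldsymbol{0}$ on piecewise constants. Set $\boldsymbol{\tau}_\varepsilon:=\pm\boldsymbol{E}\boldsymbol{\varepsilon}_h(\widetilde{\boldsymbol{u}}_h)\in\boldsymbol{B}_{k,h}$, with sign chosen so that $b(\boldsymbol{\tau}_\varepsilon,\widetilde{\boldsymbol{u}}_h)\ge 0$; elementwise integration by parts (the bubble has vanishing normal trace) together with \eqref{eq:EKbound1}--\eqref{eq:EKbound2} yields $b(\boldsymbol{\tau}_\varepsilon,\widetilde{\boldsymbol{u}}_h)\gtrsim \|\boldsymbol{\varepsilon}_h(\widetilde{\boldsymbol{u}}_h)\|_0^2$ and $\|\boldsymbol{\tau}_\varepsilon\|_{0,h}\lesssim \|\boldsymbol{\varepsilon}_h(\widetilde{\boldsymbol{u}}_h)\|_0$. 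Subtracting a constant multiple of $\boldsymbol{\delta}$ places $\boldsymbol{\tau}_\varepsilon$ in $\hat{\boldsymbol{\Sigma}}_h$ without affecting these bounds because $\div\boldsymbol{\delta}=\boldsymbol{0}$. Testing $\mathbb{A}$ with $(\boldsymbol{\tau}_\varepsilon,0)$ and estimating $a(\widetilde{\boldsymbol{\sigma}}_h,\boldsymbol{\tau}_\varepsilon)$ via Cauchy--Schwarz together with the $\lambda$-robust inequality $\|\cdot\|_a\lesssim \|\cdot\|_0$ then produces $\|\boldsymbol{\varepsilon}_h(\widetilde{\boldsymbol{u}}_h)\|_0\lesssim \mathcal{S} + \|\widetilde{\boldsymbol{\sigma}}_h\|_a$.

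For $\|\widetilde{\boldsymbol{\sigma}}_h\|_{0,h}$, note first that a scaled trace inequality for piecewise polynomials gives $\|\widetilde{\boldsymbol{\sigma}}_h\|_{0,h}\eqsim \|\widetilde{\boldsymbol{\sigma}}_h\|_0$, and then apply Lemma~\ref{lem:temp2} to $\widetilde{\boldsymbol{\sigma}}_h\in\hat{\boldsymbol{\Sigma}}_h$. For the discrete negative norm, the identity $\mathbb{A}(\widetilde{\boldsymbol{\sigma}}_h,\widetilde{\boldsymbol{u}}_h;0,\boldsymbol{v}_h) = b(\widetilde{\boldsymbol{\sigma}}_h,\boldsymbol{v}_h) - c(\widetilde{\boldsymbol{u}}_h,\boldsymbol{v}_h)$ combined with $\|\boldsymbol{v}_h\|_c\le |\boldsymbol{v}_h|_{1,h}$ immediately yields $\|\div\widetilde{\boldsymbol{\sigma}}_h\|_{-1,h}\lesssim \mathcal{S} + \|\widetilde{\boldsymbol{u}}_h\|_c$. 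For the weighted $L^2$ norm $\|h\div\widetilde{\boldsymbol{\sigma}}_h\|$, take the elementwise test function $\boldsymbol{v}_h|_K := h_K^2\,\div\widetilde{\boldsymbol{\sigma}}_h|_K\in\boldsymbol{V}_h$, which satisfies $b(\widetilde{\boldsymbol{\sigma}}_h,\boldsymbol{v}_h) = \|h\div\widetilde{\boldsymbol{\sigma}}_h\|^2$ and, via elementwise inverse and trace inequalities plus shape regularity, $|\boldsymbol{v}_h|_{1,h}\lesssim \|h\div\widetilde{\boldsymbol{\sigma}}_h\|$; the same identity then gives $\|h\div\widetilde{\boldsymbol{\sigma}}_h\|\lesssim \mathcal{S} + \|\widetilde{\boldsymbol{u}}_h\|_c$. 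Lemma~\ref{lem:temp2} now delivers
\[
\|\widetilde{\boldsymbol{\sigma}}_h\|_{0,h}\lesssim \|\widetilde{\boldsymbol{\sigma}}_h\|_a + \mathcal{S} + \|\widetilde{\boldsymbol{u}}_h\|_c.
\]

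Collecting the three bounds gives $N\lesssim \mathcal{S} + \|\widetilde{\boldsymbol{\sigma}}_h\|_a + \|\widetilde{\boldsymbol{u}}_h\|_c \lesssim \mathcal{S} + \sqrt{\mathcal{S} N}$, and Young's inequality absorbs the geometric-mean term into $\tfrac{1}{2}N$ on the left, leaving $N\lesssim \mathcal{S}$. The step I expect to require the most care is the scaling estimate $|\boldsymbol{v}_h|_{1,h}\lesssim \|h\div\widetilde{\boldsymbol{\sigma}}_h\|$ for the test function $\boldsymbol{v}_h|_K = h_K^2\,\div\widetilde{\boldsymbol{\sigma}}_h|_K$: the elementwise $\boldsymbol{\varepsilon}$ contribution follows from a standard inverse inequality, but the face-jump contribution involves the two distinct weights $h_{K^{\pm}}^2$ from adjacent cells that must be reconciled via shape regularity so that $h_F^{-1}\|[\boldsymbol{v}_h]\|_{0,F}^2$ scales correctly. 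Once this scaling is verified, the remainder is a direct combination of Lemma~\ref{lem:temp2}, the bubble estimate \eqref{eq:EKbound2}, and the cancellation structure of $\mathbb{A}$.
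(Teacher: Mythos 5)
Your proposal is correct and follows essentially the same route as the paper: the same three test choices (the pair $(\widetilde{\boldsymbol{\sigma}}_h,-\widetilde{\boldsymbol{u}}_h)$ exploiting the cancellation of the $b$-terms, the bubble $\boldsymbol{E}\boldsymbol{\varepsilon}_h(\widetilde{\boldsymbol{u}}_h)$ for $k\ge 2$, and $\boldsymbol{v}_h|_K=h_K^2\,\mathbf{div}\widetilde{\boldsymbol{\sigma}}_h$), the same use of Lemma~\ref{lem:temp2} to recover $\|\widetilde{\boldsymbol{\sigma}}_h\|_0$, and the same final absorption. The only differences are cosmetic: you test the three pairs separately and close with a Young-inequality absorption, whereas the paper combines them into a single test pair, and you enforce the zero-mean-trace constraint by shifting with $\boldsymbol{\delta}$ rather than invoking the stated equivalence of the two sup formulations.
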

\begin{proof}
As demonstrated in Lemma~\ref{lem:infsup22}, it is equivalent to prove
\begin{equation}\label{eq:infsup52}
\|\widetilde{\boldsymbol{\sigma}}_h\|_{0,h} + |\widetilde{\boldsymbol{u}}_h|_{1,h}\lesssim \sup_{(\boldsymbol{\tau}_h, \boldsymbol{v}_h)\in \boldsymbol{\Sigma}_{h}\times \boldsymbol{V}_{h}}\frac{\mathbb A(\widetilde{\boldsymbol{\sigma}}_h, \widetilde{\boldsymbol{u}}_h; \boldsymbol{\tau}_h, \boldsymbol{v}_h)}{\|\boldsymbol{\tau}_h\|_{0,h}+|\boldsymbol{v}_h|_{1,h}}:=\beta.
\end{equation}
The notation $\beta$ is introduced just for ease of presentation.
Let $\boldsymbol{\tau}_1=\boldsymbol{E}\boldsymbol{\varepsilon}_h(\widetilde{\boldsymbol{u}}_h)$ for $k\geq 2$ and $\boldsymbol{\tau}_1=\boldsymbol{0}$ for $k=1$, then it holds from Cauchy-Schwarz inequality
\[
\mathbb A(\widetilde{\boldsymbol{\sigma}}_h, \widetilde{\boldsymbol{u}}_h; \boldsymbol{\tau}_1, 0)=a(\widetilde{\boldsymbol{\sigma}}_h, \boldsymbol{\tau}_1)+b(\boldsymbol{\tau}_1, \widetilde{\boldsymbol{u}}_h)\geq -\|\widetilde{\boldsymbol{\sigma}}_h\|_a\|\boldsymbol{\tau}_1\|_a + b(\boldsymbol{\tau}_1, \widetilde{\boldsymbol{u}}_h).
\]
Using \eqref{eq:temp4}-\eqref{eq:temp2}, there exists a constant $C_3 > 0$ such that
\begin{align}
\mathbb A(\widetilde{\boldsymbol{\sigma}}_h, \widetilde{\boldsymbol{u}}_h; \boldsymbol{\tau}_1, 0)& \geq  C_1\|\boldsymbol{\varepsilon}_h(\widetilde{\boldsymbol{u}}_h)\|_0^2-C_3\|\widetilde{\boldsymbol{\sigma}}_h\|_a\|\boldsymbol{\varepsilon}_h(\widetilde{\boldsymbol{u}}_h)\|_0 \notag\\
& \geq  \frac{C_1}{2}\|\boldsymbol{\varepsilon}_h(\widetilde{\boldsymbol{u}}_h)\|_0^2-\frac{C_3^2}{2C_1}\|\widetilde{\boldsymbol{\sigma}}_h\|_a^2. \label{eq:temp5}
\end{align}
Let $\boldsymbol{v}_1\in\boldsymbol{V}_{h}$ such that $\boldsymbol{v}_1|_K=h_K^2\mathbf{div}\widetilde{\boldsymbol{\sigma}}_h$ for each $K\in\mathcal{T}_h$. Applying inverse inequality, we have
\begin{equation}\label{eq:temp7}
|\boldsymbol{v}_1|_{1,h}\lesssim \| h\div \widetilde{\boldsymbol{\sigma}}_h\|\lesssim \|\widetilde{\boldsymbol{\sigma}}_h\|_0.
\end{equation}
Thus there exists a constant $C_4 > 0$ such that
\begin{align}
\mathbb A(\widetilde{\boldsymbol{\sigma}}_h, \widetilde{\boldsymbol{u}}_h; 0, \boldsymbol{v}_1)& =  b(\widetilde{\boldsymbol{\sigma}}_h, \boldsymbol{v}_1)-c(\widetilde{\boldsymbol{u}}_h,\boldsymbol{v}_1)\geq\|h\div \widetilde{\boldsymbol{\sigma}}_h\|^2-\|\widetilde{\boldsymbol{u}}_h\|_c\|\boldsymbol{v}_1\|_c \notag\\
& \geq  \|h\div \widetilde{\boldsymbol{\sigma}}_h\|^2 - C_4\|\widetilde{\boldsymbol{u}}_h\|_c\|h\div \widetilde{\boldsymbol{\sigma}}_h\| \notag\\
& \geq  \frac{1}{2}\|h\div \widetilde{\boldsymbol{\sigma}}_h\|^2 - \frac{C_4^2}{2}\|\widetilde{\boldsymbol{u}}_h\|_c^2. \label{eq:temp6}
\end{align}
Now taking $\boldsymbol{\tau}_h=\widetilde{\boldsymbol{\sigma}}_h+\frac{C_1}{C_3^2}\boldsymbol{\tau}_1$ and $\boldsymbol{v}_h=-\widetilde{\boldsymbol{u}}_h+\frac{1}{C_4^2}\boldsymbol{v}_1$, we have from \eqref{eq:temp5} and \eqref{eq:temp6}
\begin{align*}
&\mathbb A(\widetilde{\boldsymbol{\sigma}}_h, \widetilde{\boldsymbol{u}}_h; \boldsymbol{\tau}_h, \boldsymbol{v}_h)\\
& =\mathbb A(\widetilde{\boldsymbol{\sigma}}_h, \widetilde{\boldsymbol{u}}_h; \widetilde{\boldsymbol{\sigma}}_h, -\widetilde{\boldsymbol{u}}_h)+\frac{C_1}{C_3^2}\mathbb A(\widetilde{\boldsymbol{\sigma}}_h, \widetilde{\boldsymbol{u}}_h; \boldsymbol{\tau}_1, 0)+\frac{1}{C_4^2}\mathbb A(\widetilde{\boldsymbol{\sigma}}_h, \widetilde{\boldsymbol{u}}_h; 0, \boldsymbol{v}_1) \\
 & \geq \frac{1}{2}\|\widetilde{\boldsymbol{\sigma}}_h\|_{a}^2 +\frac{1}{2}\|\widetilde{\boldsymbol{u}}_h\|_c^2 + \frac{1}{2C_4^2}\|h\div \widetilde{\boldsymbol{\sigma}}_h\|^2  + \frac{C_1^2}{2C_3^2}\|\boldsymbol{\varepsilon}_h(\widetilde{\boldsymbol{u}}_h)\|_0^2,
\end{align*}
which together with \eqref{eq:temp4} and \eqref{eq:temp7} indicates
\[
\|\widetilde{\boldsymbol{\sigma}}_h\|_{a}^2 + \|h\div \widetilde{\boldsymbol{\sigma}}_h\|^2 + |\widetilde{\boldsymbol{u}}_h|_{1,h}^2\lesssim \mathbb A(\widetilde{\boldsymbol{\sigma}}_h, \widetilde{\boldsymbol{u}}_h; \boldsymbol{\tau}_h, \boldsymbol{v}_h)\lesssim \beta(\|\widetilde{\boldsymbol{\sigma}}_h\|_{0}+|\widetilde{\boldsymbol{u}}_h|_{1,h}).
\]
According to Lemma~\ref{lem:temp2} and the definition of $\beta$, it holds
\begin{align*}
\|\widetilde{\boldsymbol{\sigma}}_h\|_0& \lesssim \|\widetilde{\boldsymbol{\sigma}}_h\|_a + \|h\div \widetilde{\boldsymbol{\sigma}}_h\| + \sup_{\boldsymbol{v}_h\in \boldsymbol{V}_{h}}\frac{b(\widetilde{\boldsymbol{\sigma}}_h, \boldsymbol{v}_h)}{|\boldsymbol{v}_h|_{1,h}} \\
& = \|\widetilde{\boldsymbol{\sigma}}_h\|_a + \|h\div \widetilde{\boldsymbol{\sigma}}_h\| + \sup_{\boldsymbol{v}_h\in \boldsymbol{V}_{h}}\frac{\mathbb A(\widetilde{\boldsymbol{\sigma}}_h, \widetilde{\boldsymbol{u}}_h; \boldsymbol{0}, \boldsymbol{v}_h)+ c(\widetilde{\boldsymbol{u}}_h, \boldsymbol{v}_h)}{|\boldsymbol{v}_h|_{1,h}} \\
& \lesssim  \|\widetilde{\boldsymbol{\sigma}}_h\|_a + \|h\div \widetilde{\boldsymbol{\sigma}}_h\| + |\widetilde{\boldsymbol{u}}_h|_{1,h}+\beta.
\end{align*}
Thus we obtain from the last two inequalities
\begin{align*}
\|\widetilde{\boldsymbol{\sigma}}_h\|_0^2+|\widetilde{\boldsymbol{u}}_h|_{1,h}^2& \lesssim \|\widetilde{\boldsymbol{\sigma}}_h\|_a^2 + \|h\div \widetilde{\boldsymbol{\sigma}}_h\|^2 + |\widetilde{\boldsymbol{u}}_h|_{1,h}^2+\beta^2 \\
& \lesssim  \beta(\|\widetilde{\boldsymbol{\sigma}}_h\|_{0}+|\widetilde{\boldsymbol{u}}_h|_{1,h})+\beta^2,
\end{align*}
which implies inf-sup condition \eqref{eq:infsup52}.
\end{proof}

\begin{corollary}
Let $1\leq k\leq n$.
Assume that $\boldsymbol{\sigma}\in\boldsymbol{H}^{k+1}(\Omega; \mathbb{S})$ and $\boldsymbol{u}\in\boldsymbol{H}^{k}(\Omega; \mathbb{R}^n)$, then
\[
\|\boldsymbol{\sigma}-\boldsymbol{\sigma}_h\|_{0,h} + |\boldsymbol{u}-\boldsymbol{u}_h|_{1,h}
\lesssim h^{k-1}\left (\|\boldsymbol{\sigma}\|_{k+1}+\|\boldsymbol{u}\|_{k}\right ).
\]
\end{corollary}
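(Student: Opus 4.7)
The plan is to adapt the proof of the high-order corollary to the stabilized low-order case. Specifically, I would apply the mesh-dependent stability \eqref{eq:infsup5} to the auxiliary pair $(\boldsymbol{I}_h^{HZ}\boldsymbol{\sigma}-\boldsymbol{\sigma}_h,\boldsymbol{Q}_h\boldsymbol{u}-\boldsymbol{u}_h)$, exploit the commuting relation \eqref{eq:opcommutative} to kill the $b$-contributions in the residual, and then conclude via the triangle inequality. The new feature, which the high-order proof does not see, is the stabilization term $c(\cdot,\cdot)$; tracking this term is the whole point.

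First, I would subtract \eqref{stabmfem1}--\eqref{stabmfem2} from \eqref{mixedform1}--\eqref{mixedform2} restricted to discrete test functions. Since the exact $\boldsymbol{u}$ is continuous and has vanishing trace on $\partial\Omega$ (this boundary condition is imposed naturally by taking $\boldsymbol{\Sigma}=\boldsymbol{H}(\mathbf{div},\Omega;\mathbb{S})$ without an essential condition on the stress), we have $[\boldsymbol{u}]=\boldsymbol{0}$ on every face and hence $c(\boldsymbol{u},\boldsymbol{v}_h)=0$. The error then satisfies $\mathbb A(\boldsymbol{\sigma}-\boldsymbol{\sigma}_h,\boldsymbol{u}-\boldsymbol{u}_h;\boldsymbol{\tau}_h,\boldsymbol{v}_h)=0$ for all $(\boldsymbol{\tau}_h,\boldsymbol{v}_h)\in\boldsymbol{\Sigma}_h\times\boldsymbol{V}_h$. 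Writing the auxiliary errors as $\boldsymbol{I}_h^{HZ}\boldsymbol{\sigma}-\boldsymbol{\sigma}_h=(\boldsymbol{I}_h^{HZ}\boldsymbol{\sigma}-\boldsymbol{\sigma})+(\boldsymbol{\sigma}-\boldsymbol{\sigma}_h)$ (similarly for $\boldsymbol{u}$) and using \eqref{eq:opcommutative} together with the $L^2$-orthogonality $(\boldsymbol{Q}_h\boldsymbol{u}-\boldsymbol{u})\perp\boldsymbol{V}_h$ to cancel both $b$-contributions, I obtain
\[
\mathbb A(\boldsymbol{I}_h^{HZ}\boldsymbol{\sigma}-\boldsymbol{\sigma}_h,\boldsymbol{Q}_h\boldsymbol{u}-\boldsymbol{u}_h;\boldsymbol{\tau}_h,\boldsymbol{v}_h)=a(\boldsymbol{I}_h^{HZ}\boldsymbol{\sigma}-\boldsymbol{\sigma},\boldsymbol{\tau}_h)-c(\boldsymbol{Q}_h\boldsymbol{u}-\boldsymbol{u},\boldsymbol{v}_h).
\]

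Next I would invoke \eqref{eq:infsup5} applied to $(\boldsymbol{I}_h^{HZ}\boldsymbol{\sigma}-\boldsymbol{\sigma}_h,\boldsymbol{Q}_h\boldsymbol{u}-\boldsymbol{u}_h)$; membership in $\hat{\boldsymbol{\Sigma}}_h\times\boldsymbol{V}_h$ is ensured because testing the first equation of either the continuous or discrete problem against $\boldsymbol{\delta}\in\boldsymbol{\Sigma}_h$ forces $\int_\Omega\mathrm{tr}\,\boldsymbol{\sigma}=\int_\Omega\mathrm{tr}\,\boldsymbol{\sigma}_h=0$, and $\boldsymbol{I}_h^{HZ}$ preserves this zeroth trace moment. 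The $a$-residual is bounded by $\|\boldsymbol{I}_h^{HZ}\boldsymbol{\sigma}-\boldsymbol{\sigma}\|_{0,h}\|\boldsymbol{\tau}_h\|_{0,h}\lesssim h^{k+1}\|\boldsymbol{\sigma}\|_{k+1}\|\boldsymbol{\tau}_h\|_{0,h}$ by the standard Hu--Zhang interpolation estimate. The $c$-residual, by Cauchy--Schwarz in the $c$-norm, is controlled by $\|\boldsymbol{Q}_h\boldsymbol{u}-\boldsymbol{u}\|_c\,|\boldsymbol{v}_h|_{1,h}$, where the trace bound in \eqref{eq:L2projErrorEstimate} gives $\sum_{F}h_F^{-1}\|[\boldsymbol{Q}_h\boldsymbol{u}-\boldsymbol{u}]\|_{0,F}^2\lesssim\sum_{K}h_K^{-1}\cdot h_K^{2k-1}|\boldsymbol{u}|_{k,K}^2\eqsim h^{2k-2}\|\boldsymbol{u}\|_k^2$. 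Collecting these yields $\|\boldsymbol{I}_h^{HZ}\boldsymbol{\sigma}-\boldsymbol{\sigma}_h\|_{0,h}+|\boldsymbol{Q}_h\boldsymbol{u}-\boldsymbol{u}_h|_{1,h}\lesssim h^{k-1}(\|\boldsymbol{\sigma}\|_{k+1}+\|\boldsymbol{u}\|_k)$.

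The result then follows by the triangle inequality using $\|\boldsymbol{\sigma}-\boldsymbol{I}_h^{HZ}\boldsymbol{\sigma}\|_{0,h}\lesssim h^{k+1}\|\boldsymbol{\sigma}\|_{k+1}$ and $|\boldsymbol{u}-\boldsymbol{Q}_h\boldsymbol{u}|_{1,h}\lesssim h^{k-1}\|\boldsymbol{u}\|_k$; the latter is obtained via Lemma~\ref{lem:temp32} from $\|\boldsymbol{\varepsilon}_h(\boldsymbol{u}-\boldsymbol{Q}_h\boldsymbol{u})\|_0\lesssim h^{k-1}|\boldsymbol{u}|_k$ together with the same trace bound used above. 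The main obstacle, and the reason the rate degenerates from $h^{k+1}$ (the stress rate for $k\geq n+1$) to $h^{k-1}$, is precisely the residual $c(\boldsymbol{Q}_h\boldsymbol{u}-\boldsymbol{u},\boldsymbol{v}_h)$: the $L^2$-projection onto $\boldsymbol{P}_{k-1}$ is only $O(h^{k-1/2})$ on element boundaries in the unweighted $L^2(F)$ norm, so in the $h_F^{-1/2}$-weighted $c$-norm it is merely $O(h^{k-1})$, which dominates the optimal interpolation contribution from the stress.
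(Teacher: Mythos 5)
Your overall strategy (apply the low-order stability \eqref{eq:infsup5} to the pair $(\boldsymbol{I}_h^{HZ}\boldsymbol{\sigma}-\boldsymbol{\sigma}_h,\,\boldsymbol{Q}_h\boldsymbol{u}-\boldsymbol{u}_h)$, bound the consistency residual including the new $c$-term, then use the triangle inequality) is the natural one, and your treatment of the stabilization residual, $c(\boldsymbol{Q}_h\boldsymbol{u}-\boldsymbol{u},\boldsymbol{v}_h)\lesssim h^{k-1}|\boldsymbol{u}|_{k}\,|\boldsymbol{v}_h|_{1,h}$ via \eqref{eq:L2projErrorEstimate}, as well as the observation that $c(\boldsymbol{u},\boldsymbol{v}_h)=0$ for the exact solution, are correct. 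The genuine gap is your use of the commuting relation \eqref{eq:opcommutative} in the range $1\leq k\leq n$. The paper invokes \eqref{eq:opcommutative} only in the high-order proof, citing Remark~3.1 of Hu's paper, whose construction is specifically for $k\geq n+1$: it hinges on the fact that $\mathbf{div}\boldsymbol{B}_{K,k}$ then exhausts the orthogonal complement of the rigid motions $\boldsymbol{R}(K)$ in $\boldsymbol{P}_{k-1}(K;\mathbb{R}^n)$, which fails for $k\leq n$. Indeed, if a bounded interpolation into the unenriched low-order space $\boldsymbol{\Sigma}_h$ with $\mathbf{div}\boldsymbol{I}_h^{HZ}\boldsymbol{\sigma}=\boldsymbol{Q}_h\mathbf{div}\boldsymbol{\sigma}$ and optimal approximation were available, a Fortin-type argument would give the uniform inf-sup condition in $\boldsymbol{H}(\mathbf{div})\times\boldsymbol{L}^2$ and the stabilization $c(\cdot,\cdot)$ would be superfluous, contradicting the very reason the low-order scheme is stabilized. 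So the step ``kill the $b$-contributions by commuting'' is not justified as written.

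The repair is that no commuting property is needed for the (suboptimal) rate $h^{k-1}$. First, $b(\boldsymbol{\tau}_h,\boldsymbol{Q}_h\boldsymbol{u}-\boldsymbol{u})=0$ already follows from $\mathbf{div}\boldsymbol{\tau}_h|_K\in\boldsymbol{P}_{k-1}(K;\mathbb{R}^n)$ for every $k\geq1$ and the definition of $\boldsymbol{Q}_h$; no commuting diagram enters there. Second, the term $b(\boldsymbol{I}_h\boldsymbol{\sigma}-\boldsymbol{\sigma},\boldsymbol{v}_h)$ need not vanish: elementwise integration by parts, exactly as used in the proof of Lemma~\ref{lm:S}, gives $b(\boldsymbol{\tau},\boldsymbol{v}_h)\lesssim\|\boldsymbol{\tau}\|_{0,h}\,|\boldsymbol{v}_h|_{1,h}$, so any standard (non-commuting) interpolation with $\|\boldsymbol{\sigma}-\boldsymbol{I}_h\boldsymbol{\sigma}\|_{0,h}\lesssim h^{k+1}\|\boldsymbol{\sigma}\|_{k+1}$, e.g.\ the nodal interpolation into $\widetilde{\boldsymbol{\Sigma}}_{k,h}$, contributes only an $O(h^{k+1})$ residual; with this replacement the rest of your argument ($a$- and $c$-residual bounds, $|\boldsymbol{u}-\boldsymbol{Q}_h\boldsymbol{u}|_{1,h}\lesssim h^{k-1}\|\boldsymbol{u}\|_{k}$, triangle inequality) goes through and yields the stated bound. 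A further small point: your claim that $\boldsymbol{I}_h^{HZ}$ ``preserves the zeroth trace moment'' is unsubstantiated; instead, subtract from the interpolant the constant tensor $c_0\boldsymbol{\delta}$ with $c_0=\frac{1}{n|\Omega|}\int_{\Omega}\mathrm{tr}(\boldsymbol{I}_h\boldsymbol{\sigma}-\boldsymbol{\sigma}_h)\,\dd x$ to land in $\hat{\boldsymbol{\Sigma}}_h$; since $\int_{\Omega}\mathrm{tr}\boldsymbol{\sigma}\,\dd x=\int_{\Omega}\mathrm{tr}\boldsymbol{\sigma}_h\,\dd x=0$ (your $\boldsymbol{\delta}$-test argument), one has $|c_0|\lesssim\|\boldsymbol{I}_h\boldsymbol{\sigma}-\boldsymbol{\sigma}\|_{0}$, $\mathbf{div}(c_0\boldsymbol{\delta})=\boldsymbol{0}$, and the modification is absorbed into the interpolation error.
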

The convergence of rate of $|\boldsymbol{u}-\boldsymbol{u}_h|_{1,h}$ is optimal. But the $L^2$-type error of $\|\boldsymbol{\sigma}-\boldsymbol{\sigma}_h\|_{0,h}$ is two order less.

\begin{remark}\rm
Using the stability in mesh dependent norms established in \cite{LovadinaStenberg2006, BraessVerfurth1996}, the MINRES method with additive Schwarz preconditioner was developed for the mixed finite element methods of the Poisson problem in \cite{RustenVassilevskiWinther1996}, and the CG method with auxiliary space preconditioner for the corresponding Schur complement problem was designed in \cite{Hannukainen2012}.
Similar stability in mesh dependent norm for the mixed finite macroelement methods of the linear elasticity can be found in \cite{Stenberg1986}, hence the fast auxiliary space preconditioner constructed in this paper can be easily extended to these mixed methods. For example, the block-triangular preconditioner similar to \eqref{eq:Matrix_Pre} for the mixed Poisson problem has been included in $i$FEM \cite{Chen.L2008c} since 2012.
\end{remark}

\subsection{Postprocessing}

Based the superconvergent results of the displacement in \eqref{eq:errorestimate1} and \eqref{eq:errorestimate3}, we will construct a superconvergent postprocessed displacement from $(\boldsymbol{\sigma}_h, \boldsymbol{u}_h)$ for the higher order case $k\geq n+1$ in this subsection.

To this end, let
\[
\boldsymbol{V}_{h}^{\ast}:=\left\{\boldsymbol{v}\in \boldsymbol{L}^2(\Omega; \mathbb{R}^n): \boldsymbol{v}|_K\in \boldsymbol{P}_{k+1}(K; \mathbb{R}^n)\quad \forall\,K\in\mathcal
{T}_h\right\}.
\]
Then a postprocessed displacement can be defined as follows:
Find $\boldsymbol{u}_h^{\ast}\in\boldsymbol{V}_{h}^{\ast}$ such that
\begin{equation}\label{postprocess1}
\boldsymbol{Q}_h\boldsymbol{u}_h^{\ast}=\boldsymbol{u}_h,
\end{equation}
\begin{equation}\label{postprocess2}
(\boldsymbol{\varepsilon}(\boldsymbol{u}_h^{\ast}), \boldsymbol{\varepsilon}(\boldsymbol{v}))_K=(\mathfrak{A}\boldsymbol{\sigma}_h, \boldsymbol{\varepsilon}(\boldsymbol{v}))_K \quad \forall~\boldsymbol{v}\in (\boldsymbol{I}-\boldsymbol{Q}_h)\boldsymbol{V}_{h}^{\ast}|_K,
\end{equation}
for any $K\in\mathcal{T}_h$.
To derive the error estimate for the postprocessed displacement $\boldsymbol{u}_h^{\ast}$, we will merge the mixed finite element method~\eqref{stabmfem1}-\eqref{stabmfem2} and the postprocessing \eqref{postprocess1}-\eqref{postprocess2} into one method as in \cite{LovadinaStenberg2006}.
To be specific,
find $(\boldsymbol{\sigma}_h, \boldsymbol{u}_h^{\ast})\in \boldsymbol{\Sigma}_{h}\times \boldsymbol{V}_{h}^{\ast}$ such that
\begin{equation}\label{postmfem}
\mathbb A_h(\boldsymbol{\sigma}_h, \boldsymbol{u}_h^{\ast}; \boldsymbol{\tau}_h, \boldsymbol{v}_h^{\ast})=-(\boldsymbol{Q}_h\boldsymbol{f}, \boldsymbol{v}_h^{\ast}) \quad \forall~(\boldsymbol{\tau}_h, \boldsymbol{v}_h^{\ast})\in \boldsymbol{\Sigma}_{h}\times \boldsymbol{V}_{h}^{\ast},
\end{equation}
where
\[
\mathbb A_h(\boldsymbol{\sigma}_h, \boldsymbol{u}_h^{\ast}; \boldsymbol{\tau}_h, \boldsymbol{v}_h^{\ast}):=\mathbb A(\boldsymbol{\sigma}_h, \boldsymbol{u}_h^{\ast}; \boldsymbol{\tau}_h, \boldsymbol{v}_h^{\ast}) + (\boldsymbol{\varepsilon}_h(\boldsymbol{u}_h^{\ast})-\mathfrak{A}\boldsymbol{\sigma}_h, \boldsymbol{\varepsilon}_h(\boldsymbol{v}_h^{\ast}-\boldsymbol{Q}_h\boldsymbol{v}_h^{\ast})).
\]

\begin{lemma}
The mixed finite element method~\eqref{stabmfem1}-\eqref{stabmfem2} and the problem \eqref{postmfem} are equivalent in the following sense: if $(\boldsymbol{\sigma}_h, \boldsymbol{u}_h^{\ast})\in \boldsymbol{\Sigma}_{h}\times \boldsymbol{V}_{h}^{\ast}$ is the solution of the problem \eqref{postmfem} and let $\boldsymbol{u}_h=\boldsymbol{Q}_h\boldsymbol{u}_h^{\ast}$, then
$(\boldsymbol{\sigma}_h, \boldsymbol{u}_h)\in \boldsymbol{\Sigma}_{h}\times \boldsymbol{V}_{h}$ solves the mixed finite element method~\eqref{stabmfem1}-\eqref{stabmfem2}; Conversely, if $(\boldsymbol{\sigma}_h, \boldsymbol{u}_h)\in \boldsymbol{\Sigma}_{h}\times \boldsymbol{V}_{h}$ is the solution of the mixed finite element method~\eqref{stabmfem1}-\eqref{stabmfem2}
and $\boldsymbol{u}_h^{\ast}\in\boldsymbol{V}_{h}^{\ast}$ is the postprocessed displacement defined by \eqref{postprocess1}-\eqref{postprocess2}, then $(\boldsymbol{\sigma}_h, \boldsymbol{u}_h^{\ast})\in \boldsymbol{\Sigma}_{h}\times \boldsymbol{V}_{h}^{\ast}$ solves the problem \eqref{postmfem}.
\end{lemma}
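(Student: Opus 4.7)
The plan is to prove the two directions by exploiting the orthogonal decomposition $\boldsymbol{V}_h^{\ast}=\boldsymbol{V}_h\oplus(\boldsymbol{I}-\boldsymbol{Q}_h)\boldsymbol{V}_h^{\ast}$ (valid because $\boldsymbol{V}_h\subset\boldsymbol{V}_h^{\ast}$ and both are broken polynomial spaces, so $\boldsymbol{Q}_h$ acts element-wise), together with the crucial inclusion $\mathbf{div}\,\boldsymbol{\Sigma}_h\subset\boldsymbol{V}_h$, and the fact that $\eta=0$ (hence $c\equiv 0$) in the higher-order regime $k\geq n+1$.

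First I would verify the forward direction. Given $(\boldsymbol{\sigma}_h,\boldsymbol{u}_h^{\ast})$ solving \eqref{postmfem} and setting $\boldsymbol{u}_h:=\boldsymbol{Q}_h\boldsymbol{u}_h^{\ast}$, I would test \eqref{postmfem} against three classes of test pairs. Taking $\boldsymbol{v}_h^{\ast}=\boldsymbol{0}$ and $\boldsymbol{\tau}_h\in\boldsymbol{\Sigma}_h$ kills the extra stabilizing term and yields $a(\boldsymbol{\sigma}_h,\boldsymbol{\tau}_h)+b(\boldsymbol{\tau}_h,\boldsymbol{u}_h^{\ast})=0$; since $\mathbf{div}\,\boldsymbol{\tau}_h\in\boldsymbol{V}_h$ one may replace $\boldsymbol{u}_h^{\ast}$ by $\boldsymbol{Q}_h\boldsymbol{u}_h^{\ast}=\boldsymbol{u}_h$ in the $b$ term, obtaining \eqref{stabmfem1}. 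Next, taking $\boldsymbol{\tau}_h=\boldsymbol{0}$ and $\boldsymbol{v}_h^{\ast}=\boldsymbol{v}_h\in\boldsymbol{V}_h$ makes $\boldsymbol{v}_h^{\ast}-\boldsymbol{Q}_h\boldsymbol{v}_h^{\ast}=\boldsymbol{0}$ and $(\boldsymbol{Q}_h\boldsymbol{f},\boldsymbol{v}_h)=(\boldsymbol{f},\boldsymbol{v}_h)$, delivering \eqref{stabmfem2}. Finally, taking $\boldsymbol{\tau}_h=\boldsymbol{0}$ and $\boldsymbol{v}_h^{\ast}\in(\boldsymbol{I}-\boldsymbol{Q}_h)\boldsymbol{V}_h^{\ast}$ kills both the load term (by orthogonality of $\boldsymbol{v}_h^{\ast}$ to $\boldsymbol{V}_h$) and the $b(\boldsymbol{\sigma}_h,\boldsymbol{v}_h^{\ast})$ contribution (by $\mathbf{div}\,\boldsymbol{\sigma}_h\in\boldsymbol{V}_h$), so that $(\boldsymbol{\varepsilon}_h(\boldsymbol{u}_h^{\ast})-\mathfrak{A}\boldsymbol{\sigma}_h,\boldsymbol{\varepsilon}_h(\boldsymbol{v}_h^{\ast}))=0$; localizing via element-wise extension by zero yields \eqref{postprocess2}, while \eqref{postprocess1} is automatic by definition of $\boldsymbol{u}_h$.

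For the converse, I would reverse the decomposition. Given $(\boldsymbol{\sigma}_h,\boldsymbol{u}_h)$ solving \eqref{stabmfem1}-\eqref{stabmfem2} and $\boldsymbol{u}_h^{\ast}$ defined by \eqref{postprocess1}-\eqref{postprocess2}, I would compute $\mathbb{A}_h(\boldsymbol{\sigma}_h,\boldsymbol{u}_h^{\ast};\boldsymbol{\tau}_h,\boldsymbol{v}_h^{\ast})$ for an arbitrary pair and simplify using $b(\boldsymbol{\tau}_h,\boldsymbol{u}_h^{\ast})=b(\boldsymbol{\tau}_h,\boldsymbol{u}_h)$ and $b(\boldsymbol{\sigma}_h,\boldsymbol{v}_h^{\ast})=b(\boldsymbol{\sigma}_h,\boldsymbol{Q}_h\boldsymbol{v}_h^{\ast})$, again invoking $\mathbf{div}\,\boldsymbol{\Sigma}_h\subset\boldsymbol{V}_h$. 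The $(a,b)$ pieces cancel by \eqref{stabmfem1}, the $b(\boldsymbol{\sigma}_h,\boldsymbol{Q}_h\boldsymbol{v}_h^{\ast})$ piece equals $-(\boldsymbol{f},\boldsymbol{Q}_h\boldsymbol{v}_h^{\ast})=-(\boldsymbol{Q}_h\boldsymbol{f},\boldsymbol{v}_h^{\ast})$ by \eqref{stabmfem2}, and the local postprocessing equations \eqref{postprocess2}, summed over $K$, kill the remaining $\boldsymbol{\varepsilon}_h$-term; the resulting identity is exactly \eqref{postmfem}.

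There is no real obstacle here; the proof is bookkeeping once the orthogonal splitting is set up. The only subtlety worth flagging is the use of $\mathbf{div}\,\boldsymbol{\Sigma}_h\subset\boldsymbol{V}_h$ (which lets $b$-terms ignore the $(\boldsymbol{I}-\boldsymbol{Q}_h)$-component) and the localization step for \eqref{postprocess2}, which relies on the discontinuity of $\boldsymbol{V}_h^{\ast}$ so that element-wise test functions can be extended by zero to globally admissible test functions in $(\boldsymbol{I}-\boldsymbol{Q}_h)\boldsymbol{V}_h^{\ast}$.
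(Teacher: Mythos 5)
Your proof is correct and follows essentially the same route as the paper: both arguments rest on the $L^2$-orthogonal splitting $\boldsymbol{v}_h^{\ast}=\boldsymbol{Q}_h\boldsymbol{v}_h^{\ast}+(\boldsymbol{I}-\boldsymbol{Q}_h)\boldsymbol{v}_h^{\ast}$, the inclusion $\mathbf{div}\,\boldsymbol{\Sigma}_h\subset\boldsymbol{V}_h$, the self-adjointness of $\boldsymbol{Q}_h$, and the vanishing of $c$ for $k\geq n+1$, with the paper packaging the forward direction as the single identity $\mathbb A_h(\boldsymbol{\sigma}_h,\boldsymbol{u}_h^{\ast};\boldsymbol{\tau}_h,\boldsymbol{v}_h)=\mathbb A(\boldsymbol{\sigma}_h,\boldsymbol{Q}_h\boldsymbol{u}_h^{\ast};\boldsymbol{\tau}_h,\boldsymbol{v}_h)$ rather than your separate choices of test functions. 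The only difference is cosmetic: in the forward direction you additionally recover \eqref{postprocess2}, which is not required by the statement but does no harm.
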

\begin{proof}
Taking any $(\boldsymbol{\tau}_h, \boldsymbol{v}_h)\in \boldsymbol{\Sigma}_{h}\times \boldsymbol{V}_{h}$, and noting the fact that $\boldsymbol{v}_h=\boldsymbol{Q}_h\boldsymbol{v}_h$ and $\mathbf{div}\boldsymbol{\Sigma}_{h}\subset \boldsymbol{V}_{h}$,
we have
\begin{align}
\mathbb A_h(\boldsymbol{\sigma}_h, \boldsymbol{u}_h^{\ast}; \boldsymbol{\tau}_h, \boldsymbol{v}_h)=&\mathbb A(\boldsymbol{\sigma}_h, \boldsymbol{u}_h^{\ast}; \boldsymbol{\tau}_h, \boldsymbol{v}_h)
=a(\boldsymbol{\sigma}_h,\boldsymbol{\tau}_h)+b(\boldsymbol{\tau}_h, \boldsymbol{u}_h^{\ast})+b(\boldsymbol{\sigma}_h, \boldsymbol{v}_h) \notag\\
=& a(\boldsymbol{\sigma}_h,\boldsymbol{\tau}_h)+b(\boldsymbol{\tau}_h, \boldsymbol{Q}_h\boldsymbol{u}_h^{\ast})+b(\boldsymbol{\sigma}_h, \boldsymbol{v}_h)=\mathbb A(\boldsymbol{\sigma}_h, \boldsymbol{Q}_h\boldsymbol{u}_h^{\ast}; \boldsymbol{\tau}_h, \boldsymbol{v}_h). \label{eq:temp1post}
\end{align}
Hence we can see from \eqref{eq:temp1post} that $(\boldsymbol{\sigma}_h, \boldsymbol{u}_h)$ solves the mixed finite element method~\eqref{stabmfem1}-\eqref{stabmfem2} if $(\boldsymbol{\sigma}_h, \boldsymbol{u}_h^{\ast})$ is the solution of the problem \eqref{postmfem}.

Conversely, since $\mathbf{div}\boldsymbol{\Sigma}_{h}\subset \boldsymbol{V}_{h}$ and $(\boldsymbol{I}-\boldsymbol{Q}_h)^2=\boldsymbol{I}-\boldsymbol{Q}_h$, it follows from \eqref{eq:temp1post} and \eqref{postprocess1}
\begin{align}
\mathbb A_h(\boldsymbol{\sigma}_h, \boldsymbol{u}_h^{\ast}; \boldsymbol{\tau}_h, \boldsymbol{v}_h^{\ast})=&\mathbb A_h(\boldsymbol{\sigma}_h, \boldsymbol{u}_h^{\ast}; \boldsymbol{\tau}_h, \boldsymbol{Q}_h\boldsymbol{v}_h^{\ast})+\mathbb A_h(\boldsymbol{\sigma}_h, \boldsymbol{u}_h^{\ast}; \boldsymbol{0}, \boldsymbol{v}_h^{\ast}-\boldsymbol{Q}_h\boldsymbol{v}_h^{\ast}) \notag\\
=& \mathbb A(\boldsymbol{\sigma}_h, \boldsymbol{u}_h; \boldsymbol{\tau}_h, \boldsymbol{Q}_h\boldsymbol{v}_h^{\ast})+(\boldsymbol{\varepsilon}_h(\boldsymbol{u}_h^{\ast})-\mathfrak{A}\boldsymbol{\sigma}_h, \boldsymbol{\varepsilon}_h(\boldsymbol{v}_h^{\ast}-\boldsymbol{Q}_h\boldsymbol{v}_h^{\ast})) \notag\\
=& -(\boldsymbol{f}, \boldsymbol{Q}_h\boldsymbol{v}_h^{\ast}) +(\boldsymbol{\varepsilon}_h(\boldsymbol{u}_h^{\ast})-\mathfrak{A}\boldsymbol{\sigma}_h, \boldsymbol{\varepsilon}_h(\boldsymbol{v}_h^{\ast}-\boldsymbol{Q}_h\boldsymbol{v}_h^{\ast})), \label{eq:temp3post}
\end{align}
which together with \eqref{postprocess2} means that $(\boldsymbol{\sigma}_h, \boldsymbol{u}_h^{\ast})$ solves the problem \eqref{postmfem}.
\end{proof}

\begin{lemma}
For any $\boldsymbol{v}\in \boldsymbol{H}^1(\mathcal{T}_h; \mathbb{R}^n)$, it holds that
\begin{equation}\label{eq:temp4post}
|\boldsymbol{v}-\boldsymbol{Q}_h\boldsymbol{v}|_{1,h} \eqsim \|\boldsymbol{\varepsilon}_h (\boldsymbol{v}-\boldsymbol{Q}_h\boldsymbol{v})\|_0.
\end{equation}
\end{lemma}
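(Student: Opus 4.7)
The inequality
$\|\boldsymbol{\varepsilon}_h(\boldsymbol{v}-\boldsymbol{Q}_h\boldsymbol{v})\|_0\leq |\boldsymbol{v}-\boldsymbol{Q}_h\boldsymbol{v}|_{1,h}$
is immediate from the definition of $|\cdot|_{1,h}$, so the plan is to focus entirely on the reverse estimate. Set $\boldsymbol{w}:=\boldsymbol{v}-\boldsymbol{Q}_h\boldsymbol{v}$; by the very definition of $|\cdot|_{1,h}$, it suffices to bound
\[
\sum_{F\in\mathcal{F}_h} h_F^{-1}\|[\boldsymbol{w}]\|_{0,F}^2 \lesssim \|\boldsymbol{\varepsilon}_h(\boldsymbol{w})\|_0^2.
\]

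The first key observation is that in the postprocessing regime $k\geq n+1\geq 2$ we have $\boldsymbol{R}(K)\subset\boldsymbol{P}_1(K;\mathbb{R}^n)\subset\boldsymbol{V}_h|_K$ on every element $K\in\mathcal{T}_h$. Since $\boldsymbol{Q}_h$ is the $L^2$-projection onto $\boldsymbol{V}_h$, $\boldsymbol{w}$ is $L^2(K)$-orthogonal to every rigid motion on each $K$. This activates a local Korn inequality $\|\boldsymbol{\nabla}\boldsymbol{w}\|_{0,K}\lesssim \|\boldsymbol{\varepsilon}(\boldsymbol{w})\|_{0,K}$ together with the Poincar\'e estimate $\|\boldsymbol{w}\|_{0,K}\lesssim h_K\|\boldsymbol{\nabla}\boldsymbol{w}\|_{0,K}$, both of which follow from the orthogonality to $\boldsymbol{R}(K)$ (which contains constants and linearized rotations). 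Combining, we obtain
\[
\|\boldsymbol{w}\|_{0,K}^2+h_K^2\|\boldsymbol{\nabla}\boldsymbol{w}\|_{0,K}^2\lesssim h_K^2\|\boldsymbol{\varepsilon}(\boldsymbol{w})\|_{0,K}^2.
\]

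Next, applying the standard trace inequality
$\|\boldsymbol{w}\|_{0,\partial K}^2\lesssim h_K^{-1}\|\boldsymbol{w}\|_{0,K}^2+h_K\|\boldsymbol{\nabla}\boldsymbol{w}\|_{0,K}^2$
yields
$h_K^{-1}\|\boldsymbol{w}\|_{0,\partial K}^2\lesssim \|\boldsymbol{\varepsilon}(\boldsymbol{w})\|_{0,K}^2$.
On an interior face $F$ shared by $K^+$ and $K^-$ the triangle inequality gives
$\|[\boldsymbol{w}]\|_{0,F}\leq \|\boldsymbol{w}|_{K^+}\|_{0,F}+\|\boldsymbol{w}|_{K^-}\|_{0,F}$, and the same bound holds trivially on boundary faces. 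Since each $K$ has $n+1$ faces and the mesh is shape-regular so that $h_F\eqsim h_K$, summing over faces gives
\[
\sum_{F\in\mathcal{F}_h} h_F^{-1}\|[\boldsymbol{w}]\|_{0,F}^2 \lesssim \sum_{K\in\mathcal{T}_h} h_K^{-1}\|\boldsymbol{w}\|_{0,\partial K}^2 \lesssim \sum_{K\in\mathcal{T}_h}\|\boldsymbol{\varepsilon}(\boldsymbol{w})\|_{0,K}^2 = \|\boldsymbol{\varepsilon}_h(\boldsymbol{w})\|_0^2,
\]
which closes the equivalence.

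The main subtlety is the orthogonality step: it is this property, and not any continuity of $\boldsymbol{v}$ across faces, that allows Korn's inequality to be applied locally on each simplex and thereby rescues the bound. If the argument had to be made without the inclusion $\boldsymbol{R}(K)\subset\boldsymbol{V}_h|_K$ (say, at $k=1$), neither Korn nor the full Poincar\'e estimate on $\boldsymbol{w}$ would be available and the equivalence would likely fail; the assumption $k\geq n+1$ adopted throughout this postprocessing subsection is precisely what ensures it. An alternative, cosmetically different, route is to invoke Lemma~\ref{lem:temp32} to reduce to controlling $\sum_{F} h_F^{-1}\|\boldsymbol{\pi}_F[\boldsymbol{w}]\|_{0,F}^2$ and then to apply exactly the same Korn--Poincar\'e--trace chain, noting $\|\boldsymbol{\pi}_F[\boldsymbol{w}]\|_{0,F}\leq\|[\boldsymbol{w}]\|_{0,F}$; both paths lead to the same constants up to shape-regularity.
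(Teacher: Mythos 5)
Your proof is correct, but it takes a different route than the paper's. The paper recycles the machinery of Lemma~\ref{lem:temp32}: writing $\boldsymbol{w}=\boldsymbol{v}-\boldsymbol{Q}_h\boldsymbol{v}$, it inserts Brenner's elementwise rigid-motion interpolant $\boldsymbol{\pi}\boldsymbol{w}$, uses that $\boldsymbol{Q}_h$ reproduces $\boldsymbol{\pi}\boldsymbol{w}$ (since $\boldsymbol{R}(K)\subset\boldsymbol{P}_{k-1}(K;\mathbb{R}^n)$), and then invokes the trace/projection estimate (3.3) and the Korn-type interpolation estimate of \cite{Brenner2004a} to get $\sum_F h_F^{-1}\|[\boldsymbol{w}]\|_{0,F}^2\leq\sum_K|\boldsymbol{w}-\boldsymbol{\pi}\boldsymbol{w}|_{1,K}^2\lesssim\|\boldsymbol{\varepsilon}_h(\boldsymbol{w})\|_0^2$. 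You instead exploit the $L^2(K)$-orthogonality of $\boldsymbol{w}$ to $\boldsymbol{R}(K)$ (available because $\boldsymbol{R}(K)\subset\boldsymbol{V}_h|_K$ for $k\geq 2$) to run a local Korn--Poincar\'e--trace chain directly on $\boldsymbol{w}$, avoiding the interpolant $\boldsymbol{\pi}$ altogether; the same structural hypothesis ($k\geq2$, satisfied here since $k\geq n+1$) underlies both arguments, and your observation that the equivalence fails at $k=1$ is accurate. What your route buys is self-containedness and a slightly more transparent mechanism; what it costs is that the one nontrivial ingredient, the scaling-uniform local Korn inequality $\|\boldsymbol{\nabla}\boldsymbol{w}\|_{0,K}\lesssim\|\boldsymbol{\varepsilon}(\boldsymbol{w})\|_{0,K}$ for $\boldsymbol{w}\perp_{L^2(K)}\boldsymbol{R}(K)$, is asserted rather than justified; it does hold (combine the quotient-form Korn inequality $\inf_{\boldsymbol{r}\in\boldsymbol{R}(K)}\|\boldsymbol{\nabla}(\boldsymbol{w}-\boldsymbol{r})\|_{0,K}\lesssim\|\boldsymbol{\varepsilon}(\boldsymbol{w})\|_{0,K}$ with the orthogonality, which gives $\|\boldsymbol{r}\|_{0,K}\leq\|\boldsymbol{w}-\boldsymbol{r}\|_{0,K}$, and an inverse estimate on the finite-dimensional space $\boldsymbol{R}(K)$), so you should either add that short argument or cite \cite{Brenner2004a}, which is precisely the reference the paper leans on.
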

\begin{proof}
It is sufficient to prove
\begin{equation}\label{eq:temp2post}
\sum_{F\in \mathcal F_h} h_F^{-1}\|[\boldsymbol{v}-\boldsymbol{Q}_h\boldsymbol{v}]\|_{0,F}^2\lesssim \|\boldsymbol{\varepsilon}_h (\boldsymbol{v}-\boldsymbol{Q}_h\boldsymbol{v})\|_0^2.
\end{equation}
Let $\boldsymbol{\pi}$ be defined as in Lemma~\ref{lem:temp32} and $\boldsymbol{w}=\boldsymbol{v}-\boldsymbol{Q}_h\boldsymbol{v}$. It follows from (3.3) in \cite{Brenner2004a}
\begin{align}
\sum_{F\in\mathcal{F}_h}h_F^{-1}\|[\boldsymbol{w}-\boldsymbol{Q}_h\boldsymbol{w}]\|_{0,F}^2
=&\sum_{F\in\mathcal{F}_h}h_F^{-1}\|[(\boldsymbol{w}-\boldsymbol{\pi}\boldsymbol{w})-\boldsymbol{Q}_h(\boldsymbol{w}-\boldsymbol{\pi}\boldsymbol{w})]\|_{0,F}^2 \notag \\
\leq &\sum_{K\in\mathcal{T}_h}|\boldsymbol{w}-\boldsymbol{\pi}\boldsymbol{w}|_{1,K}^2
\lesssim \|\boldsymbol{\varepsilon}_h(\boldsymbol{w})\|_0^2. \label{eq:piFestimatepost}
\end{align}
On the other hand,
\[
\sum_{F\in\mathcal{F}_h}h_F^{-1}\|[\boldsymbol{v}-\boldsymbol{Q}_h\boldsymbol{v}]\|_{0,F}^2=\sum_{F\in\mathcal{F}_h}h_F^{-1}\|[\boldsymbol{w}-\boldsymbol{Q}_h\boldsymbol{w}]\|_{0,F}^2.
\]
Therefore \eqref{eq:temp2post} follows from \eqref{eq:piFestimatepost}.
\end{proof}

\begin{theorem}
For any $(\widetilde{\boldsymbol{\sigma}}_h, \widetilde{\boldsymbol{u}}_h^{\ast})\in \hat{\boldsymbol{\Sigma}}_{h}\times \boldsymbol{V}_{h}^{\ast}$, it follows
\begin{equation}\label{eq:infsup2post}
\|\widetilde{\boldsymbol{\sigma}}_h\|_{0,h} + |\widetilde{\boldsymbol{u}}_h^{\ast}|_{1,h}\lesssim \sup_{(\boldsymbol{\tau}_h, \boldsymbol{v}_h^{\ast})\in \hat{\boldsymbol{\Sigma}}_{h}\times \boldsymbol{V}_{h}^{\ast}}\frac{\mathbb A_h(\widetilde{\boldsymbol{\sigma}}_h, \widetilde{\boldsymbol{u}}_h^{\ast}; \boldsymbol{\tau}_h, \boldsymbol{v}_h^{\ast})}{\|\boldsymbol{\tau}_h\|_{0,h} + |\boldsymbol{v}_h^{\ast}|_{1,h}}.
\end{equation}
\end{theorem}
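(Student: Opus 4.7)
The plan is to reduce this new stability to the previously established inf-sup \eqref{eq:infsup5} (and \eqref{eq:infsup3}) for $\mathbb{A}$ on $\hat{\boldsymbol{\Sigma}}_h\times\boldsymbol{V}_h$, via the $L^2$-orthogonal splitting $\widetilde{\boldsymbol{u}}_h^\ast=\widetilde{\boldsymbol{u}}_h+\widetilde{\boldsymbol{w}}_h$ with $\widetilde{\boldsymbol{u}}_h:=\boldsymbol{Q}_h\widetilde{\boldsymbol{u}}_h^\ast\in\boldsymbol{V}_h$ and $\widetilde{\boldsymbol{w}}_h:=(\boldsymbol{I}-\boldsymbol{Q}_h)\widetilde{\boldsymbol{u}}_h^\ast$. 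Triangle inequality together with Lemma~\eqref{eq:temp4post} gives
\[
|\widetilde{\boldsymbol{u}}_h^\ast|_{1,h}^2 \lesssim |\widetilde{\boldsymbol{u}}_h|_{1,h}^2 + \|\boldsymbol{\varepsilon}_h(\widetilde{\boldsymbol{w}}_h)\|_0^2,
\]
so the target quantity splits into the three pieces $\|\widetilde{\boldsymbol{\sigma}}_h\|_{0,h}$, $|\widetilde{\boldsymbol{u}}_h|_{1,h}$ and $\|\boldsymbol{\varepsilon}_h(\widetilde{\boldsymbol{w}}_h)\|_0$ that must be controlled separately.

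The crucial observation is that when a test function $(\boldsymbol{\tau}_h,\boldsymbol{v}_h)$ is taken from $\hat{\boldsymbol{\Sigma}}_h\times\boldsymbol{V}_h$, the new term in $\mathbb{A}_h$ vanishes (because $\boldsymbol{v}_h=\boldsymbol{Q}_h\boldsymbol{v}_h$), and by $L^2$-orthogonality $b(\boldsymbol{\tau}_h,\widetilde{\boldsymbol{u}}_h^\ast)=b(\boldsymbol{\tau}_h,\widetilde{\boldsymbol{u}}_h)$ since $\mathbf{div}\boldsymbol{\tau}_h\in\boldsymbol{V}_h$ is $L^2$-orthogonal to $\widetilde{\boldsymbol{w}}_h$. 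Hence
\[
\mathbb{A}_h(\widetilde{\boldsymbol{\sigma}}_h,\widetilde{\boldsymbol{u}}_h^\ast;\boldsymbol{\tau}_h,\boldsymbol{v}_h) = \mathbb{A}(\widetilde{\boldsymbol{\sigma}}_h,\widetilde{\boldsymbol{u}}_h;\boldsymbol{\tau}_h,\boldsymbol{v}_h) - c(\widetilde{\boldsymbol{w}}_h,\boldsymbol{v}_h),
\]
and choosing $(\boldsymbol{\tau}_h^1,\boldsymbol{v}_h^1)$ as delivered by \eqref{eq:infsup5} applied to $(\widetilde{\boldsymbol{\sigma}}_h,\widetilde{\boldsymbol{u}}_h)$ produces a lower bound of order $\|\widetilde{\boldsymbol{\sigma}}_h\|_{0,h}^2+|\widetilde{\boldsymbol{u}}_h|_{1,h}^2$ on $\mathbb{A}$, with $\|\boldsymbol{\tau}_h^1\|_{0,h}+|\boldsymbol{v}_h^1|_{1,h}\lesssim\|\widetilde{\boldsymbol{\sigma}}_h\|_{0,h}+|\widetilde{\boldsymbol{u}}_h|_{1,h}$. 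The residual $c$-term is handled by Cauchy--Schwarz together with the bound $\|\widetilde{\boldsymbol{w}}_h\|_c\lesssim\|\boldsymbol{\varepsilon}_h(\widetilde{\boldsymbol{w}}_h)\|_0$ implied by \eqref{eq:temp4post}.

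To recover $\|\boldsymbol{\varepsilon}_h(\widetilde{\boldsymbol{w}}_h)\|_0^2$ I test with $(\boldsymbol{0},\widetilde{\boldsymbol{w}}_h)$: using $b(\widetilde{\boldsymbol{\sigma}}_h,\widetilde{\boldsymbol{w}}_h)=0$ and $(\boldsymbol{I}-\boldsymbol{Q}_h)\widetilde{\boldsymbol{w}}_h=\widetilde{\boldsymbol{w}}_h$, one obtains
\[
\mathbb{A}_h(\widetilde{\boldsymbol{\sigma}}_h,\widetilde{\boldsymbol{u}}_h^\ast;\boldsymbol{0},\widetilde{\boldsymbol{w}}_h) = \|\boldsymbol{\varepsilon}_h(\widetilde{\boldsymbol{w}}_h)\|_0^2 + (\boldsymbol{\varepsilon}_h(\widetilde{\boldsymbol{u}}_h)-\mathfrak{A}\widetilde{\boldsymbol{\sigma}}_h,\,\boldsymbol{\varepsilon}_h(\widetilde{\boldsymbol{w}}_h)) - c(\widetilde{\boldsymbol{u}}_h^\ast,\widetilde{\boldsymbol{w}}_h),
\]
a quadratic-plus-linear expression in $\widetilde{\boldsymbol{w}}_h$ with the correct sign on the leading term. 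Combining the two estimates by testing with $(\boldsymbol{\tau}_h^1,\boldsymbol{v}_h^1+\gamma\widetilde{\boldsymbol{w}}_h)$ for a sufficiently small fixed $\gamma>0$, Young's inequality absorbs all cross terms into the positive triple $\|\widetilde{\boldsymbol{\sigma}}_h\|_{0,h}^2+|\widetilde{\boldsymbol{u}}_h|_{1,h}^2+\gamma\|\boldsymbol{\varepsilon}_h(\widetilde{\boldsymbol{w}}_h)\|_0^2$, while the chosen test-function norm is controlled by $\|\widetilde{\boldsymbol{\sigma}}_h\|_{0,h}+|\widetilde{\boldsymbol{u}}_h^\ast|_{1,h}$, which closes the inf-sup argument.

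The main obstacle I expect lies in the stabilized low-order regime $\eta=1$: the expansion $c(\widetilde{\boldsymbol{u}}_h^\ast,\widetilde{\boldsymbol{w}}_h)=c(\widetilde{\boldsymbol{u}}_h,\widetilde{\boldsymbol{w}}_h)+\|\widetilde{\boldsymbol{w}}_h\|_c^2$ produces a negative contribution $-\gamma\|\widetilde{\boldsymbol{w}}_h\|_c^2$ of the same scale as the gain term $\gamma\|\boldsymbol{\varepsilon}_h(\widetilde{\boldsymbol{w}}_h)\|_0^2$. The quantitative form of \eqref{eq:piFestimatepost} yields $\|\widetilde{\boldsymbol{w}}_h\|_c^2\lesssim\|\boldsymbol{\varepsilon}_h(\widetilde{\boldsymbol{w}}_h)\|_0^2$ with an absolute constant, and $\gamma$ must be tuned alongside the $\epsilon$-parameters in Young's inequality so that the net $\|\boldsymbol{\varepsilon}_h(\widetilde{\boldsymbol{w}}_h)\|_0^2$ coefficient stays positive. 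For $k\geq n+1$, $c\equiv 0$ and the combination step is immediate.
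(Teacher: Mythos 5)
Your proposal is correct and takes essentially the same route as the paper: split $\widetilde{\boldsymbol{u}}_h^\ast$ via $\boldsymbol{Q}_h$, use the identity \eqref{eq:temp1post} together with the prior stability \eqref{eq:infsup3} to control $\|\widetilde{\boldsymbol{\sigma}}_h\|_{0,h}+|\boldsymbol{Q}_h\widetilde{\boldsymbol{u}}_h^\ast|_{1,h}$, test with $(\boldsymbol{0},\widetilde{\boldsymbol{u}}_h^\ast-\boldsymbol{Q}_h\widetilde{\boldsymbol{u}}_h^\ast)$ to recover $\|\boldsymbol{\varepsilon}_h(\widetilde{\boldsymbol{u}}_h^\ast-\boldsymbol{Q}_h\widetilde{\boldsymbol{u}}_h^\ast)\|_0$, and conclude with \eqref{eq:temp4post}; the paper merely adds the two supremum bounds rather than assembling your single composite test function with the small parameter $\gamma$. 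Your worry about the stabilized low-order case is moot: this theorem belongs to the postprocessing subsection, which is restricted to $k\geq n+1$ where $c\equiv 0$ (as reflected in \eqref{eq:temp1post}), so the case where your $\gamma$-tuning could fail does not arise.
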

\begin{proof}
For any $\boldsymbol{v}_h\in \boldsymbol{V}_h$, we have from \eqref{eq:temp1post}
\[
\mathbb A_h(\widetilde{\boldsymbol{\sigma}}_h, \widetilde{\boldsymbol{u}}_h^{\ast}; \boldsymbol{\tau}_h, \boldsymbol{v}_h)=\mathbb A(\widetilde{\boldsymbol{\sigma}}_h, \boldsymbol{Q}_h\widetilde{\boldsymbol{u}}_h^{\ast}; \boldsymbol{\tau}_h, \boldsymbol{v}_h).
\]
Since $(\widetilde{\boldsymbol{\sigma}}_h, \boldsymbol{Q}_h\widetilde{\boldsymbol{u}}_h^{\ast})\in \hat{\boldsymbol{\Sigma}}_{h}\times \boldsymbol{V}_{h}$, it holds from \eqref{eq:infsup3}
\begin{align}
\|\widetilde{\boldsymbol{\sigma}}_h\|_{0,h} + |\boldsymbol{Q}_h\widetilde{\boldsymbol{u}}_h^{\ast}|_{1,h} \lesssim & \sup_{(\boldsymbol{\tau}_h, \boldsymbol{v}_h)\in \hat{\boldsymbol{\Sigma}}_{h}\times \boldsymbol{V}_{h}}\frac{\mathbb A(\widetilde{\boldsymbol{\sigma}}_h, \boldsymbol{Q}_h\widetilde{\boldsymbol{u}}_h^{\ast}; \boldsymbol{\tau}_h, \boldsymbol{v}_h)}{\|\boldsymbol{\tau}_h\|_{0,h} + |\boldsymbol{v}_h|_{1,h}} \notag\\
=&\sup_{(\boldsymbol{\tau}_h, \boldsymbol{v}_h)\in \hat{\boldsymbol{\Sigma}}_{h}\times \boldsymbol{V}_{h}}\frac{\mathbb A_h(\widetilde{\boldsymbol{\sigma}}_h, \widetilde{\boldsymbol{u}}_h^{\ast}; \boldsymbol{\tau}_h, \boldsymbol{v}_h)}{\|\boldsymbol{\tau}_h\|_{0,h} + |\boldsymbol{v}_h|_{1,h}} \notag\\
\leq & \sup_{(\boldsymbol{\tau}_h, \boldsymbol{v}_h^{\ast})\in \hat{\boldsymbol{\Sigma}}_{h}\times \boldsymbol{V}_{h}^{\ast}}\frac{\mathbb A_h(\widetilde{\boldsymbol{\sigma}}_h, \widetilde{\boldsymbol{u}}_h^{\ast}; \boldsymbol{\tau}_h, \boldsymbol{v}_h^{\ast})}{\|\boldsymbol{\tau}_h\|_{0,h} + |\boldsymbol{v}_h^{\ast}|_{1,h}}. \label{eq:temp5post}
\end{align}
Similarly as in \eqref{eq:temp3post}, we get
\[
\mathbb A_h(\widetilde{\boldsymbol{\sigma}}_h, \widetilde{\boldsymbol{u}}_h^{\ast}; \boldsymbol{0}, \widetilde{\boldsymbol{u}}_h^{\ast}-\boldsymbol{Q}_h\widetilde{\boldsymbol{u}}_h^{\ast})=(\boldsymbol{\varepsilon}_h(\widetilde{\boldsymbol{u}}_h^{\ast})-\mathfrak{A}\widetilde{\boldsymbol{\sigma}}_h, \boldsymbol{\varepsilon}_h(\widetilde{\boldsymbol{u}}_h^{\ast}-\boldsymbol{Q}_h\widetilde{\boldsymbol{u}}_h^{\ast})).
\]
Then we rewrite it as
\begin{align}
\|\boldsymbol{\varepsilon}_h(\widetilde{\boldsymbol{u}}_h^{\ast}-\boldsymbol{Q}_h\widetilde{\boldsymbol{u}}_h^{\ast})\|_0^2=&(\mathfrak{A}\widetilde{\boldsymbol{\sigma}}_h-\boldsymbol{\varepsilon}_h(\boldsymbol{Q}_h\widetilde{\boldsymbol{u}}_h^{\ast}), \boldsymbol{\varepsilon}_h(\widetilde{\boldsymbol{u}}_h^{\ast}-\boldsymbol{Q}_h\widetilde{\boldsymbol{u}}_h^{\ast})) \notag\\
&+\mathbb A_h(\widetilde{\boldsymbol{\sigma}}_h, \widetilde{\boldsymbol{u}}_h^{\ast}; \boldsymbol{0}, \widetilde{\boldsymbol{u}}_h^{\ast}-\boldsymbol{Q}_h\widetilde{\boldsymbol{u}}_h^{\ast}). \label{eq:temp6post}
\end{align}
According to the triangle inequality and \eqref{eq:temp5post}, it holds
\begin{align*}
\|\mathfrak{A}\widetilde{\boldsymbol{\sigma}}_h-\boldsymbol{\varepsilon}_h(\boldsymbol{Q}_h\widetilde{\boldsymbol{u}}_h^{\ast})\|_0 \leq & \|\mathfrak{A}\widetilde{\boldsymbol{\sigma}}_h\|_0+\|\boldsymbol{\varepsilon}_h(\boldsymbol{Q}_h\widetilde{\boldsymbol{u}}_h^{\ast})\|_0
\lesssim \|\widetilde{\boldsymbol{\sigma}}_h\|_0 + |\boldsymbol{Q}_h\widetilde{\boldsymbol{u}}_h^{\ast}|_{1, h} \\
\lesssim & \sup_{(\boldsymbol{\tau}_h, \boldsymbol{v}_h^{\ast})\in \hat{\boldsymbol{\Sigma}}_{h}\times \boldsymbol{V}_{h}^{\ast}}\frac{\mathbb A_h(\widetilde{\boldsymbol{\sigma}}_h, \widetilde{\boldsymbol{u}}_h^{\ast}; \boldsymbol{\tau}_h, \boldsymbol{v}_h^{\ast})}{\|\boldsymbol{\tau}_h\|_{0,h} + |\boldsymbol{v}_h^{\ast}|_{1,h}}.
\end{align*}
Due to \eqref{eq:temp4post}, we have
\begin{align*}
&\mathbb A_h(\widetilde{\boldsymbol{\sigma}}_h, \widetilde{\boldsymbol{u}}_h^{\ast}; \boldsymbol{0}, \widetilde{\boldsymbol{u}}_h^{\ast}-\boldsymbol{Q}_h\widetilde{\boldsymbol{u}}_h^{\ast}) \\
\leq & \|\boldsymbol{\varepsilon}_h(\widetilde{\boldsymbol{u}}_h^{\ast}-\boldsymbol{Q}_h\widetilde{\boldsymbol{u}}_h^{\ast})\|_0 \sup_{(\boldsymbol{\tau}_h, \boldsymbol{v}_h^{\ast})\in \hat{\boldsymbol{\Sigma}}_{h}\times \boldsymbol{V}_{h}^{\ast}}\frac{\mathbb A_h(\widetilde{\boldsymbol{\sigma}}_h, \widetilde{\boldsymbol{u}}_h^{\ast}; \boldsymbol{0}, \boldsymbol{v}_h^{\ast}-\boldsymbol{Q}_h\boldsymbol{v}_h^{\ast})}{\|\boldsymbol{\varepsilon}_h(\boldsymbol{v}_h^{\ast}-\boldsymbol{Q}_h\boldsymbol{v}_h^{\ast})\|_0} \\
\lesssim &\|\boldsymbol{\varepsilon}_h(\widetilde{\boldsymbol{u}}_h^{\ast}-\boldsymbol{Q}_h\widetilde{\boldsymbol{u}}_h^{\ast})\|_0 \sup_{(\boldsymbol{\tau}_h, \boldsymbol{v}_h^{\ast})\in \hat{\boldsymbol{\Sigma}}_{h}\times \boldsymbol{V}_{h}^{\ast}}\frac{\mathbb A_h(\widetilde{\boldsymbol{\sigma}}_h, \widetilde{\boldsymbol{u}}_h^{\ast}; \boldsymbol{0}, \boldsymbol{v}_h^{\ast}-\boldsymbol{Q}_h\boldsymbol{v}_h^{\ast})}{|\boldsymbol{v}_h^{\ast}-\boldsymbol{Q}_h\boldsymbol{v}_h^{\ast}|_{1,h}} \\
\lesssim & \|\boldsymbol{\varepsilon}_h(\widetilde{\boldsymbol{u}}_h^{\ast}-\boldsymbol{Q}_h\widetilde{\boldsymbol{u}}_h^{\ast})\|_0\sup_{(\boldsymbol{\tau}_h, \boldsymbol{v}_h^{\ast})\in \hat{\boldsymbol{\Sigma}}_{h}\times \boldsymbol{V}_{h}^{\ast}}\frac{\mathbb A_h(\widetilde{\boldsymbol{\sigma}}_h, \widetilde{\boldsymbol{u}}_h^{\ast}; \boldsymbol{\tau}_h, \boldsymbol{v}_h^{\ast})}{\|\boldsymbol{\tau}_h\|_{0,h} + |\boldsymbol{v}_h^{\ast}|_{1,h}}.
\end{align*}
Using the last two inequalities and Cauchy-Schwarz inequality, we get from \eqref{eq:temp6post}
\[
\|\boldsymbol{\varepsilon}_h(\widetilde{\boldsymbol{u}}_h^{\ast}-\boldsymbol{Q}_h\widetilde{\boldsymbol{u}}_h^{\ast})\|_0\lesssim \sup_{(\boldsymbol{\tau}_h, \boldsymbol{v}_h^{\ast})\in \hat{\boldsymbol{\Sigma}}_{h}\times \boldsymbol{V}_{h}^{\ast}}\frac{\mathbb A_h(\widetilde{\boldsymbol{\sigma}}_h, \widetilde{\boldsymbol{u}}_h^{\ast}; \boldsymbol{\tau}_h, \boldsymbol{v}_h^{\ast})}{\|\boldsymbol{\tau}_h\|_{0,h} + |\boldsymbol{v}_h^{\ast}|_{1,h}},
\]
which together with \eqref{eq:temp4post} implies
\begin{equation}\label{eq:temp7post}
|\widetilde{\boldsymbol{u}}_h^{\ast}-\boldsymbol{Q}_h\widetilde{\boldsymbol{u}}_h^{\ast}|_{1,h}\lesssim \sup_{(\boldsymbol{\tau}_h, \boldsymbol{v}_h^{\ast})\in \hat{\boldsymbol{\Sigma}}_{h}\times \boldsymbol{V}_{h}^{\ast}}\frac{\mathbb A_h(\widetilde{\boldsymbol{\sigma}}_h, \widetilde{\boldsymbol{u}}_h^{\ast}; \boldsymbol{\tau}_h, \boldsymbol{v}_h^{\ast})}{\|\boldsymbol{\tau}_h\|_{0,h} + |\boldsymbol{v}_h^{\ast}|_{1,h}}.
\end{equation}
Finally we can finish the proof by combining \eqref{eq:temp5post} and \eqref{eq:temp7post}.
\end{proof}

\begin{theorem}
Assume that $\boldsymbol{\sigma}\in\boldsymbol{H}^{k+1}(\Omega; \mathbb{S})$ and $\boldsymbol{u}\in\boldsymbol{H}^{k+2}(\Omega; \mathbb{R}^n)$, then
\begin{equation}\label{eq:errorestimatepost}
\|\boldsymbol{\sigma}-\boldsymbol{\sigma}_h\|_{0,h} + |\boldsymbol{u}-\boldsymbol{u}_h^{\ast}|_{1,h}\lesssim h^{k+1}\left(\|\boldsymbol{\sigma}\|_{k+1}+\|\boldsymbol{u}\|_{k+2}\right).
\end{equation}
Moreover, when $\Omega$ is convex, we have
\begin{equation}\label{eq:errorestimatepost2}
\|\boldsymbol{u}-\boldsymbol{u}_h^{\ast}\|_{0}
\lesssim h^{k+2}\left(\|\boldsymbol{\sigma}\|_{k+1}+\|\boldsymbol{u}\|_{k+2}\right).
\end{equation}
\end{theorem}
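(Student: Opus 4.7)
The plan is to apply the inf-sup stability \eqref{eq:infsup2post} to a discrete error built from carefully chosen interpolants of $(\boldsymbol{\sigma},\boldsymbol{u})$ and reduce everything to a consistency identity together with standard approximation estimates. The key observations are that $k\geq n+1$ yields $\eta=0$ so the stabilization $c(\cdot,\cdot)\equiv 0$, and the constitutive law $\mathfrak{A}\boldsymbol{\sigma}=\boldsymbol{\varepsilon}(\boldsymbol{u})$ annihilates the extra post-processing term in $\mathbb{A}_h$ when evaluated at $(\boldsymbol{\sigma},\boldsymbol{u})$. Combined with $\mathbf{div}\boldsymbol{\sigma}=-\boldsymbol{f}$ and subtracting \eqref{postmfem}, this yields the consistency identity
\[
\mathbb{A}_h(\boldsymbol{\sigma}-\boldsymbol{\sigma}_h,\boldsymbol{u}-\boldsymbol{u}_h^{\ast};\boldsymbol{\tau}_h,\boldsymbol{v}_h^{\ast})=-(\boldsymbol{f}-\boldsymbol{Q}_h\boldsymbol{f},\boldsymbol{v}_h^{\ast}-\boldsymbol{Q}_h\boldsymbol{v}_h^{\ast}),
\]
whose right-hand side is controlled by $h^{k+1}\|\boldsymbol{f}\|_k|\boldsymbol{v}_h^{\ast}|_{1,h}$ via \eqref{eq:L2projErrorEstimate} and an elementwise Poincar\'e bound.

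Next, I introduce $\tilde{\boldsymbol{u}}^{\ast}\in\boldsymbol{V}_h^{\ast}$ element-by-element as the exact-data analog of the post-processing: $\boldsymbol{Q}_h\tilde{\boldsymbol{u}}^{\ast}=\boldsymbol{Q}_h\boldsymbol{u}$ and $(\boldsymbol{\varepsilon}(\tilde{\boldsymbol{u}}^{\ast}),\boldsymbol{\varepsilon}(\boldsymbol{v}))_K=(\boldsymbol{\varepsilon}(\boldsymbol{u}),\boldsymbol{\varepsilon}(\boldsymbol{v}))_K$ for $\boldsymbol{v}\in(\boldsymbol{I}-\boldsymbol{Q}_h)\boldsymbol{V}_h^{\ast}|_K$. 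Well-posedness follows exactly as for $\boldsymbol{u}_h^{\ast}$, and a Bramble--Hilbert/Korn argument yields $|\boldsymbol{u}-\tilde{\boldsymbol{u}}^{\ast}|_{1,h}+h^{-1}\|\boldsymbol{u}-\tilde{\boldsymbol{u}}^{\ast}\|_0\lesssim h^{k+1}\|\boldsymbol{u}\|_{k+2}$. I then apply \eqref{eq:infsup2post} to $(\boldsymbol{I}_h^{HZ}\boldsymbol{\sigma}-\boldsymbol{\sigma}_h,\tilde{\boldsymbol{u}}^{\ast}-\boldsymbol{u}_h^{\ast})$, after a mean-correcting scalar multiple of $\boldsymbol{\delta}$ to land in $\hat{\boldsymbol{\Sigma}}_h$, and split the numerator into the consistency term and $\mathbb{A}_h(\boldsymbol{I}_h^{HZ}\boldsymbol{\sigma}-\boldsymbol{\sigma},\tilde{\boldsymbol{u}}^{\ast}-\boldsymbol{u};\boldsymbol{\tau}_h,\boldsymbol{v}_h^{\ast})$. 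I bound the latter term by term: the $a$-contribution uses $\|\boldsymbol{I}_h^{HZ}\boldsymbol{\sigma}-\boldsymbol{\sigma}\|_0\lesssim h^{k+1}\|\boldsymbol{\sigma}\|_{k+1}$; the $b(\boldsymbol{\tau}_h,\tilde{\boldsymbol{u}}^{\ast}-\boldsymbol{u})$ term vanishes since $\mathbf{div}\boldsymbol{\Sigma}_h\subset\boldsymbol{V}_h$ and $\boldsymbol{Q}_h(\tilde{\boldsymbol{u}}^{\ast}-\boldsymbol{u})=0$; the $b(\boldsymbol{I}_h^{HZ}\boldsymbol{\sigma}-\boldsymbol{\sigma},\boldsymbol{v}_h^{\ast})$ term reduces via the commutativity \eqref{eq:opcommutative} to an $L^2$-projection error of $\mathbf{div}\boldsymbol{\sigma}$ tested against $\boldsymbol{v}_h^{\ast}-\boldsymbol{Q}_h\boldsymbol{v}_h^{\ast}$; and the extra cross term $(\boldsymbol{\varepsilon}_h(\tilde{\boldsymbol{u}}^{\ast}-\boldsymbol{u})-\mathfrak{A}(\boldsymbol{I}_h^{HZ}\boldsymbol{\sigma}-\boldsymbol{\sigma}),\boldsymbol{\varepsilon}_h(\boldsymbol{v}_h^{\ast}-\boldsymbol{Q}_h\boldsymbol{v}_h^{\ast}))$ is handled by Cauchy--Schwarz together with $\|\boldsymbol{\varepsilon}_h(\boldsymbol{v}_h^{\ast}-\boldsymbol{Q}_h\boldsymbol{v}_h^{\ast})\|_0\lesssim|\boldsymbol{v}_h^{\ast}|_{1,h}$. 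A triangle inequality against the interpolation errors of $\boldsymbol{\sigma}$ and $\tilde{\boldsymbol{u}}^{\ast}$ then delivers \eqref{eq:errorestimatepost}.

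For \eqref{eq:errorestimatepost2}, no new duality is needed. Exploiting the identity $\boldsymbol{Q}_h\boldsymbol{u}_h^{\ast}=\boldsymbol{u}_h$ from \eqref{postprocess1}, the Pythagorean decomposition
\[
\|\boldsymbol{u}-\boldsymbol{u}_h^{\ast}\|_0^2=\|\boldsymbol{Q}_h\boldsymbol{u}-\boldsymbol{u}_h\|_0^2+\|(\boldsymbol{I}-\boldsymbol{Q}_h)(\boldsymbol{u}-\boldsymbol{u}_h^{\ast})\|_0^2
\]
reduces the $L^2$ estimate to \eqref{eq:errorestimate3}, which already handles the first term, and the second term, for which I use the inclusion $\boldsymbol{R}(K)\subset\boldsymbol{V}_h|_K$ (valid since $k\geq 2$) and the local Korn--Poincar\'e inequality to get $\|(\boldsymbol{I}-\boldsymbol{Q}_h)(\boldsymbol{u}-\boldsymbol{u}_h^{\ast})\|_{0,K}\lesssim h_K\|\boldsymbol{\varepsilon}(\boldsymbol{u}-\boldsymbol{u}_h^{\ast})\|_{0,K}$; summing and plugging in \eqref{eq:errorestimatepost} delivers the required $O(h^{k+2})$.

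The principal technical friction lies in the extra post-processing cross term, specifically the inequality $\|\boldsymbol{\varepsilon}_h(\boldsymbol{v}_h^{\ast}-\boldsymbol{Q}_h\boldsymbol{v}_h^{\ast})\|_0\lesssim|\boldsymbol{v}_h^{\ast}|_{1,h}$, which while plausible requires an inverse estimate on the piecewise polynomial $\boldsymbol{Q}_h\boldsymbol{v}_h^{\ast}$ combined with the discrete Korn inequality \eqref{eq:korn}; and in verifying that the $\hat{\boldsymbol{\Sigma}}_h$ mean-trace correction by a scalar multiple of $\boldsymbol{\delta}$ does not pollute the estimate, which is absorbed using the $\frac{1}{n\lambda+2\mu}$ factor generated by $a(\boldsymbol{\delta},\cdot)$ so that all constants remain independent of $\lambda$.
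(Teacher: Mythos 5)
Your proposal is correct and takes essentially the paper's route: the same consistency/error equation for the merged formulation \eqref{postmfem}, the inf-sup stability \eqref{eq:infsup2post} applied to a discrete error pair built from $\boldsymbol{I}_h^{HZ}\boldsymbol{\sigma}$, a triangle inequality, and, for the $L^2$ bound, the split through $\boldsymbol{Q}_h$ combined with the superconvergence \eqref{eq:errorestimate3} (so convexity enters only through that quoted result, exactly as in the paper). The only substantive deviation is your locally constructed comparison function $\tilde{\boldsymbol{u}}^{\ast}$ (exact-data post-processing of $\boldsymbol{u}$), where the paper simply uses the $L^2$ projection $\boldsymbol{Q}_h^{\ast}\boldsymbol{u}$ onto $\boldsymbol{V}_h^{\ast}$, which annihilates $b(\boldsymbol{\tau}_h,\cdot)$ for the same reason but comes with standard approximation estimates for free (no extra well-posedness/Bramble--Hilbert lemma), and the paper cancels the data-oscillation term exactly against $b(\boldsymbol{\sigma}-\boldsymbol{I}_h^{HZ}\boldsymbol{\sigma},\boldsymbol{v}_h^{\ast})$ rather than estimating it; your explicit mean-trace correction by $\boldsymbol{\delta}$ and the bound $\|\boldsymbol{\varepsilon}_h(\boldsymbol{v}_h^{\ast}-\boldsymbol{Q}_h\boldsymbol{v}_h^{\ast})\|_0\lesssim|\boldsymbol{v}_h^{\ast}|_{1,h}$ are legitimate refinements of steps the paper leaves implicit.
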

\begin{proof}
By direct computation, we have
\[
\mathbb A_h(\boldsymbol{\sigma}, \boldsymbol{u}; \boldsymbol{\tau}_h, \boldsymbol{v}_h^{\ast})=-(\boldsymbol{f}, \boldsymbol{v}_h^{\ast}) \quad  \forall~(\boldsymbol{\tau}_h, \boldsymbol{v}_h^{\ast})\in \boldsymbol{\Sigma}_{h}\times \boldsymbol{V}_{h}^{\ast}.
\]
Combining with \eqref{postmfem}, we get the error equation
\begin{equation}\label{eq:erroreqnpost}
\mathbb A_h(\boldsymbol{\sigma}-\boldsymbol{\sigma}_h, \boldsymbol{u}-\boldsymbol{u}_h^{\ast}; \boldsymbol{\tau}_h, \boldsymbol{v}_h^{\ast})=(\boldsymbol{Q}_h\boldsymbol{f}-\boldsymbol{f}, \boldsymbol{v}_h^{\ast}) \quad  \forall~(\boldsymbol{\tau}_h, \boldsymbol{v}_h^{\ast})\in \boldsymbol{\Sigma}_{h}\times \boldsymbol{V}_{h}^{\ast}.
\end{equation}
Let $\boldsymbol{Q}_h^{\ast}$ be the $L^2$ orthogonal projection from $\boldsymbol{L}^2(\Omega; \mathbb{R}^n)$ onto $\boldsymbol{V}_{h}^{\ast}$.
It holds from \eqref{eq:opcommutative} that
\begin{align*}
&\mathbb A_h(\boldsymbol{\sigma}-\boldsymbol{I}_h^{HZ}\boldsymbol{\sigma}, \boldsymbol{u}-\boldsymbol{Q}_h^{\ast}\boldsymbol{u}; \boldsymbol{\tau}_h, \boldsymbol{v}_h^{\ast}) \\
=&a(\boldsymbol{\sigma}-\boldsymbol{I}_h^{HZ}\boldsymbol{\sigma}, \boldsymbol{\tau}_h) + b(\boldsymbol{\sigma}-\boldsymbol{I}_h^{HZ}\boldsymbol{\sigma}, \boldsymbol{v}_h^{\ast}) \\
&+ (\boldsymbol{\varepsilon}_h(\boldsymbol{u}-\boldsymbol{Q}_h^{\ast}\boldsymbol{u})-\mathfrak{A}(\boldsymbol{\sigma}-\boldsymbol{I}_h^{HZ}\boldsymbol{\sigma}), \boldsymbol{\varepsilon}_h(\boldsymbol{v}_h^{\ast}-\boldsymbol{Q}_h\boldsymbol{v}_h^{\ast})) \\
=&a(\boldsymbol{\sigma}-\boldsymbol{I}_h^{HZ}\boldsymbol{\sigma}, \boldsymbol{\tau}_h) + (\boldsymbol{Q}_h\boldsymbol{f}-\boldsymbol{f}, \boldsymbol{v}_h^{\ast}) \\
&+ (\boldsymbol{\varepsilon}_h(\boldsymbol{u}-\boldsymbol{Q}_h^{\ast}\boldsymbol{u})-\mathfrak{A}(\boldsymbol{\sigma}-\boldsymbol{I}_h^{HZ}\boldsymbol{\sigma}), \boldsymbol{\varepsilon}_h(\boldsymbol{v}_h^{\ast}-\boldsymbol{Q}_h\boldsymbol{v}_h^{\ast})).
\end{align*}
Then we obtain from \eqref{eq:erroreqnpost}, Cauchy-Schwarz inequality and the error estimates of $\boldsymbol{I}_h^{HZ}, \boldsymbol{Q}_h^{\ast}$ and $\boldsymbol{Q}_h$
\begin{align*}
&\mathbb A_h(\boldsymbol{I}_h^{HZ}\boldsymbol{\sigma}-\boldsymbol{\sigma}_h, \boldsymbol{Q}_h^{\ast}\boldsymbol{u}-\boldsymbol{u}_h^{\ast}; \boldsymbol{\tau}_h, \boldsymbol{v}_h^{\ast}) \\
=&a(\boldsymbol{\sigma}-\boldsymbol{I}_h^{HZ}\boldsymbol{\sigma}, \boldsymbol{\tau}_h) + b(\boldsymbol{\sigma}-\boldsymbol{I}_h^{HZ}\boldsymbol{\sigma}, \boldsymbol{v}_h^{\ast}) \\
&+ (\boldsymbol{\varepsilon}_h(\boldsymbol{u}-\boldsymbol{Q}_h^{\ast}\boldsymbol{u})-\mathfrak{A}(\boldsymbol{\sigma}-\boldsymbol{I}_h^{HZ}\boldsymbol{\sigma}), \boldsymbol{\varepsilon}_h(\boldsymbol{v}_h^{\ast}-\boldsymbol{Q}_h\boldsymbol{v}_h^{\ast})) \\
=&a(\boldsymbol{I}_h^{HZ}\boldsymbol{\sigma}-\boldsymbol{\sigma}, \boldsymbol{\tau}_h)- (\boldsymbol{\varepsilon}_h(\boldsymbol{u}-\boldsymbol{Q}_h^{\ast}\boldsymbol{u})-\mathfrak{A}(\boldsymbol{\sigma}-\boldsymbol{I}_h^{HZ}\boldsymbol{\sigma}), \boldsymbol{\varepsilon}_h(\boldsymbol{v}_h^{\ast}-\boldsymbol{Q}_h\boldsymbol{v}_h^{\ast})) \\
\lesssim & h^{k+1}\left (\|\boldsymbol{\sigma}\|_{k+1}+\|\boldsymbol{u}\|_{k+2}\right )(\|\boldsymbol{\tau}_h\|_0+\|\boldsymbol{\varepsilon}_h(\boldsymbol{v}_h^{\ast})\|_0).
\end{align*}
Applying the inf-sup condition \eqref{eq:infsup2post}, it follows
\[
\|\boldsymbol{I}_h^{HZ}\boldsymbol{\sigma}-\boldsymbol{\sigma}_h\|_{0,h} + |\boldsymbol{Q}_h^{\ast}\boldsymbol{u}-\boldsymbol{u}_h^{\ast}|_{1,h}\lesssim h^{k+1}\left (\|\boldsymbol{\sigma}\|_{k+1}+\|\boldsymbol{u}\|_{k+2}\right ).
\]
Hence we will achieve \eqref{eq:errorestimatepost} by using the triangle inequality, and the error estimates of $\boldsymbol{I}_h^{HZ}$ and $\boldsymbol{Q}_h^{\ast}$.

When $\Omega$ is convex, we have from the triangle inequality, the error estimate of $\boldsymbol{Q}_h$ and \eqref{postprocess1}
\begin{align*}
\|\boldsymbol{u}-\boldsymbol{u}_h^{\ast}\|_{0}\leq & \|(\boldsymbol{I}-\boldsymbol{Q}_h)(\boldsymbol{u}-\boldsymbol{u}_h^{\ast})\|_{0}+\|\boldsymbol{Q}_h\boldsymbol{u}-\boldsymbol{Q}_h\boldsymbol{u}_h^{\ast}\|_{0} \\
\lesssim & h|\boldsymbol{u}-\boldsymbol{u}_h^{\ast}|_{1,h}+\|\boldsymbol{Q}_h\boldsymbol{u}-\boldsymbol{u}_h\|_{0}.
\end{align*}
Finally \eqref{eq:errorestimatepost2} is achieved by using \eqref{eq:errorestimatepost} and \eqref{eq:errorestimate3}.
\end{proof}

% !TEX root =  main.tex

\section{Block Diagonal and Triangular Preconditioners}
Direct use of the mesh dependent norm $\|\cdot\|_{0,h}\times |\cdot|_{1,h}$ would require the additional assembling of the jump term. In this section, we  first derive equivalent matrix forms for these mesh dependent norms and then construct block-diagonal and block-triangular preconditioners.

\subsection{Equivalent matrix forms of the mesh dependent norms}
By the trace theorem and the inverse inequality, it is easy to see that
\begin{equation}\label{sigmamass}
\|\boldsymbol{\tau}_h\|_{0,h} \eqsim \|\boldsymbol{\tau}_h\|_{0}\quad \forall~\boldsymbol{\tau}_h\in \boldsymbol{\Sigma}_{h},
\end{equation}
which implies that we can use the weighted mass matrix $M_h^{\lambda}$ with $\lambda = 0$, i.e., $M_h$.

For each $\bs v_h\in \bs V_h$, denote by $\boldsymbol{\underline{v_h}}$ the matrix representation of $\bs v_h$ based on the basis of $\bs V_h$ used to form the mass matrix $M_{u, h}$ (cf. \cite[Subsection 4.4]{Xu1992}).
For the mesh dependent norm $|\cdot|_{1,h}$ of displacement, we can use the Schur complement of $(1,1)$ block, i.e., $S_h : = B_hM_h^{-1}B_h^T + C_h$. It is easy to see $S_h$ is SPD and induce a norm $\|\cdot \|_{S_h}$ on $\bs V_h$, i.e.
\[
\|\bs v_h\|_{S_h}^2:=\boldsymbol{\underline{v_h}}^TS_h\boldsymbol{\underline{v_h}}, \quad \forall~\bs v_h\in\bs V_h.
\]
\begin{lemma}\label{lm:S}
 We have the norm equivalence:
 $$
 |\bs v_h|_{1,h} \eqsim \|\bs v_h \|_{S_h}\quad \forall ~\bs v_h \in \bs V_h.
 $$
\end{lemma}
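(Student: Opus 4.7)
The plan is to realize $\|\bs v_h\|_{S_h}^2$ as a weighted supremum of $b(\bs \tau_h, \bs v_h)$ plus the stabilization form, and then compare it with $|\bs v_h|_{1,h}^2$ via (i) elementwise integration by parts for the upper bound and (ii) the inf-sup analysis of Section~3 for the lower bound. By the standard Schur-complement identity and \eqref{sigmamass},
$$
\|\bs v_h\|_{S_h}^2 = \bs v_h^T B_h M_h^{-1} B_h^T \bs v_h + c(\bs v_h, \bs v_h),\qquad
\bs v_h^T B_h M_h^{-1} B_h^T \bs v_h \eqsim \sup_{\bs 0 \neq \bs \tau_h \in \bs \Sigma_h} \frac{b(\bs \tau_h, \bs v_h)^2}{\|\bs \tau_h\|_{0,h}^2}.
$$

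For the upper bound $\|\bs v_h\|_{S_h}^2 \lesssim |\bs v_h|_{1,h}^2$, elementwise integration by parts, the symmetry of $\bs \tau_h$, and the continuity of its normal trace yield
$$
b(\bs \tau_h, \bs v_h) = -\int_\Omega \bs \tau_h : \bs \varepsilon_h(\bs v_h)\,\dd x + \sum_{F \in \mathcal F_h} \int_F (\bs \tau_h \bs \nu_F) \cdot [\bs v_h]\,\dd s.
$$
Two applications of Cauchy--Schwarz (on the volume term and on the face sum, splitting $1 = h_F^{1/2}\cdot h_F^{-1/2}$) give $|b(\bs \tau_h, \bs v_h)| \leq \|\bs \tau_h\|_{0,h}\, |\bs v_h|_{1,h}$, hence the Schur piece is bounded by $|\bs v_h|_{1,h}^2$. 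Since $c(\bs v_h, \bs v_h) \leq \sum_F h_F^{-1}\|[\bs v_h]\|_{0,F}^2 \leq |\bs v_h|_{1,h}^2$, the upper bound follows.

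For the lower bound I separate the two regimes of $k$. If $k \geq n+1$, then $C_h = 0$ and Lemma~\ref{lem:infsup22} applied to $\bs v_h$ gives at once
$$
|\bs v_h|_{1,h} \lesssim \sup_{\bs \tau_h \in \bs \Sigma_h} \frac{b(\bs \tau_h, \bs v_h)}{\|\bs \tau_h\|_{0,h}} \eqsim \bigl(\bs v_h^T B_h M_h^{-1} B_h^T \bs v_h\bigr)^{1/2} = \|\bs v_h\|_{S_h}.
$$
If $1 \leq k \leq n$, then $\eta = 1$ and the jump portion of $|\bs v_h|_{1,h}^2$ coincides with $c(\bs v_h, \bs v_h) \leq \|\bs v_h\|_{S_h}^2$; it remains to bound $\|\bs \varepsilon_h(\bs v_h)\|_0^2$. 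For $k = 1$ this is trivial since $\bs V_h$ consists of piecewise constants. For $2 \leq k \leq n$ I reuse the bubble tensor from the proof of low-order stability: setting $\bs \tau_1 := \bs E\,\bs \varepsilon_h(\bs v_h) \in \bs \Sigma_h$, \eqref{eq:temp4} and \eqref{eq:temp2} give $\|\bs \tau_1\|_0 \eqsim \|\bs \varepsilon_h(\bs v_h)\|_0$ and $b(\bs \tau_1, \bs v_h) \gtrsim \|\bs \varepsilon_h(\bs v_h)\|_0^2$, whence
$$
\bs v_h^T B_h M_h^{-1} B_h^T \bs v_h \geq \frac{b(\bs \tau_1, \bs v_h)^2}{\|\bs \tau_1\|_0^2} \gtrsim \|\bs \varepsilon_h(\bs v_h)\|_0^2.
$$

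The main obstacle is the low-order regime, where Lemma~\ref{lem:infsup22} does not apply and the strain and jump contributions must be controlled by separate mechanisms---the bubble tensor $\bs E\,\bs \varepsilon_h(\bs v_h)$ for the strain and the stabilization form $c(\cdot,\cdot)$ for the jump---with the further fork at $k = 1$, where the strain argument breaks down and is rescued only by the vanishing of the elementwise strain on piecewise constants.
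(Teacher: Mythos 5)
Your proposal is correct and follows essentially the same route as the paper's proof: the Riesz/sup characterization of the Schur complement norm combined with \eqref{sigmamass}, elementwise integration by parts with Cauchy--Schwarz for the bound $\|\bs v_h\|_{S_h}\lesssim |\bs v_h|_{1,h}$, and the inf-sup condition (respectively the bubble operator $\bs E$ plus the stabilization term) for the reverse bound. The only difference is that you spell out the low-order case $1\leq k\leq n$, which the paper dismisses with ``can be proved similarly by adding the stabilization term''; your treatment, including the degenerate $k=1$ situation where $\bs\varepsilon_h(\bs v_h)=\boldsymbol{0}$, is exactly the intended fleshing-out and is sound.
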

\begin{proof}
We focus on the case $k\geq n+1$ first. The low order case $1\leq k\leq n$ can be proved similarly by adding the stabilization term.

The inf-sup condition \eqref{eq:infsup2} implies $B_h^T$ is injective and thus $S_h$ is SPD and defines an inner product on $\bs V_h$. The identity
\begin{equation}\label{Sh}
(\boldsymbol{\underline{v_h}}^TS_h\boldsymbol{\underline{v_h}})^{1/2} = \|M_h^{-1/2}B_h^T \boldsymbol{\underline{v_h}}\| = \sup_{\boldsymbol{\tau}_h \in \boldsymbol{\Sigma}_{h}} \frac{b(\boldsymbol{\tau}_h, \boldsymbol{v}_h)}{\|\boldsymbol{\tau}_h\|_{0}}, \quad \forall ~\bs v_h \in \bs V_h
\end{equation}
follows from the Riesz representation. Here $\|\cdot\|$ denotes the Euclidean norm of a vector. The inequality $ |\bs v_h|_{1,h} \lesssim \|\bs v_h \|_{S_h}$ is a combination of \eqref{eq:infsup2},  \eqref{sigmamass}, and \eqref{Sh}.

From integration by parts, we can easily get $b(\boldsymbol{\tau}_h, \boldsymbol{v}_h) \lesssim \|\boldsymbol{\tau}_h\|_{0,h}|\bs v_h|_{1,h}$. Then the inequality  $\|\bs v_h \|_{S_h} \lesssim  |\bs v_h|_{1,h}$ follows from \eqref{sigmamass} amd \eqref{Sh}.
\end{proof}
%\begin{lemma}
%For $k\geq n+1$, we have
%$$
%|\boldsymbol{v}_h|_{1,h} \eqsim \sup_{\boldsymbol{\tau}_h \in \boldsymbol{\Sigma}_{h}} \frac{b(\boldsymbol{\tau}_h, \boldsymbol{v}_h)}{\|\boldsymbol{\tau}_h\|_{0}},
%$$
% for any $\boldsymbol{v}_h\in \boldsymbol{V}_{h}$.
%For $1\leq k\leq n$,
%\begin{equation}\label{eq:infsup4}
%|\boldsymbol{v}_h|_{1,h}\eqsim \sup_{\boldsymbol{\tau}_h \in \boldsymbol{\Sigma}_{h}} \frac{b(\boldsymbol{\tau}_h, \boldsymbol{v}_h)}{\|\boldsymbol{\tau}_h\|_{0}} + \|\boldsymbol{v}_h\|_c \eqsim \sup_{\hat{\boldsymbol{\tau}}_h \in \hat{\boldsymbol{\Sigma}}_{h}} \frac{b(\hat{\boldsymbol{\tau}}_h, \boldsymbol{v}_h)}{\|\hat{\boldsymbol{\tau}}_h\|_{0}} + \|\boldsymbol{v}_h\|_c,
%\end{equation}
% for any $\boldsymbol{v}_h\in \boldsymbol{V}_{h}$.
%\end{lemma}
%\begin{proof}
% The $\lesssim$ is from the inf-sup condition \eqref{eq:infsup2}. The other direction is from integration by parts of $b(\cdot,\cdot)$ to get the continuity of $b(\cdot,\cdot)$ in $\|\cdot\|_{0,h}\times |\cdot|_{1,h}$ norm.
%\end{proof}

%Therefore we can use the Schur complement $S_h$
%\begin{definition}
We define the operator $\mcal P_{h}: \bs \Sigma_{h}'\times \bs V_{h}' \rightarrow \bs \Sigma_{h} \times \bs V_{h}$ with the matrix representation
\begin{equation}
\mcal P_{h}
=
\begin{pmatrix}
 M_h^{-1} &0 \\
0& S_{h}^{-1}
\end{pmatrix},
\end{equation}
and denoted by
$$
\mathcal L_h^{\lambda} =
\begin{pmatrix}
M_h^{\lambda} & B_h^T\\
B_h & -C_h
\end{pmatrix}.
$$

\begin{theorem}
The $\mcal P_{h}$ is a uniform preconditioner for $\mcal L_{h}^\lambda$, i.e., the corresponding operator norms
\begin{align*}
\|\mcal P_{h}\mcal L_{h}^{\lambda}\|_{\bs \Sigma _{h}\times \bs V_{h} \to \bs \Sigma _{h}\times \bs V_{h}}, \|(\mcal P_{h}\mcal L_{h}^{\lambda})^{-1}\|_{\bs \Sigma _{h}\times \bs V_{h} \to \bs \Sigma _{h}\times \bs V_{h}}
\end{align*}
are bounded and independent of parameters $h$ and $\lambda$.
\end{theorem}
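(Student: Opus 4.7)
The plan is to recognize this as a direct instance of the Brezzi/Mardal--Winther operator-preconditioning paradigm (cf.\ \cite{MardalWinther2011}): once one verifies uniform continuity and uniform inf-sup of $\mathbb{A}$ on $\bs\Sigma_h\times\bs V_h$ equipped with the norm induced by $\mathcal{P}_h^{-1}$, both independent of $h$ and $\lambda$, the uniform bounds on $\|\mathcal{P}_h\mathcal{L}_h^{\lambda}\|$ and its inverse follow from standard theory.

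First I would reduce everything to the mesh-dependent norms already studied. The inner product on $\bs\Sigma_h\times\bs V_h$ dual to $\mathcal{P}_h$ is the block diagonal form $(\cdot,\cdot)_{M_h}\oplus(\cdot,\cdot)_{S_h}$; by \eqref{sigmamass} the stress factor is uniformly equivalent to $\|\cdot\|_{0,h}$ and by Lemma~\ref{lm:S} the displacement factor is uniformly equivalent to $|\cdot|_{1,h}$, and crucially neither equivalence involves $\lambda$. Hence it is enough to prove uniform boundedness and uniform inf-sup of $\mathbb{A}$ in the product norm $\|\cdot\|_{0,h}\times|\cdot|_{1,h}$.

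For uniform continuity, the compliance tensor $\mathfrak{A}$ has operator norm bounded by $1/\mu$ independently of $\lambda$, which gives $|a(\boldsymbol{\sigma}_h,\boldsymbol{\tau}_h)|\lesssim\|\boldsymbol{\sigma}_h\|_{0,h}\|\boldsymbol{\tau}_h\|_{0,h}$. Elementwise integration by parts applied to $b$ together with Cauchy--Schwarz on the resulting volume and face contributions, pairing naturally with the face-weighted terms in $\|\cdot\|_{0,h}$ and the jump term in $|\cdot|_{1,h}$, yields $|b(\boldsymbol{\tau}_h,\boldsymbol{v}_h)|\lesssim\|\boldsymbol{\tau}_h\|_{0,h}|\boldsymbol{v}_h|_{1,h}$. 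The stabilization term $c$ is trivially controlled by $|\boldsymbol{u}_h|_{1,h}|\boldsymbol{v}_h|_{1,h}$ since it is one of the pieces defining $|\cdot|_{1,h}$. Uniform inf-sup is exactly the content of the already-established \eqref{eq:infsup3} for $k\ge n+1$ and \eqref{eq:infsup5} for $1\le k\le n$.

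The main anticipated obstacle is reconciling the inf-sup condition, which lives on the mean-zero subspace $\hat{\bs\Sigma}_h$, with the action of $\mathcal{P}_h$ on the full space $\bs\Sigma_h$. I would handle this by noting that testing the first block of \eqref{basic} against $\boldsymbol{\delta}\in\bs\Sigma_h$ forces $\int_\Omega\operatorname{tr}\boldsymbol{\sigma}_h=0$ for any solution, so the physically meaningful solution manifold lies in $\hat{\bs\Sigma}_h\times\bs V_h$; the orthogonal decomposition $\bs\Sigma_h=\hat{\bs\Sigma}_h\oplus\operatorname{span}(\boldsymbol{\delta})$ is invariant under both $\mathcal{L}_h^{\lambda}$ and $\mathcal{P}_h$ (since $\operatorname{div}\boldsymbol{\delta}=0$ and $\mathfrak{A}\boldsymbol{\delta}\in\operatorname{span}(\boldsymbol{\delta})$), and on the one-dimensional complement everything is explicit. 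Restricting the estimates to the relevant $\hat{\bs\Sigma}_h$-component then allows the uniform bounds for $\mathcal{P}_h\mathcal{L}_h^{\lambda}$ and $(\mathcal{P}_h\mathcal{L}_h^{\lambda})^{-1}$ to be read off directly from the continuity and inf-sup.
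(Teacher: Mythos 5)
Your proposal is correct in substance and follows the route the paper itself intends (the theorem is stated there without an explicit proof): combine the matrix-norm equivalences \eqref{sigmamass} and Lemma~\ref{lm:S} with the $\lambda$-independent continuity of $a$, $b$, $c$ in the $\|\cdot\|_{0,h}\times|\cdot|_{1,h}$ norms and the inf-sup conditions \eqref{eq:infsup3}, \eqref{eq:infsup5}, and then invoke the Mardal--Winther operator-preconditioning framework; since $\mathbb A$ is symmetric, the one-sided inf-sup indeed suffices for bounding the inverse.

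One point needs to be stated more sharply, and it is exactly the point you relegate to the last paragraph. Your splitting $\bs\Sigma_h=\hat{\bs\Sigma}_h\oplus\operatorname{span}(\boldsymbol{\delta})$ is invariant, as you say, because $\div\boldsymbol{\delta}=\boldsymbol{0}$ and $\mathfrak{A}\boldsymbol{\delta}=\frac{1}{n\lambda+2\mu}\boldsymbol{\delta}$; but on the one-dimensional invariant subspace $\operatorname{span}(\boldsymbol{\delta})\times\{\boldsymbol{0}\}$ the preconditioned operator acts as multiplication by $2\mu/(n\lambda+2\mu)$. Consequently $\|(\mcal P_h\mcal L_h^{\lambda})^{-1}\|\geq (n\lambda+2\mu)/(2\mu)$ on the full space $\bs\Sigma_h\times\bs V_h$, and $\mcal L_h^{\lambda}$ is even singular there in the limit $\lambda=+\infty$. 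So the $\lambda$-uniform bound on the inverse cannot be ``read off'' on all of $\bs\Sigma_h\times\bs V_h$: it holds precisely on $\hat{\bs\Sigma}_h\times\bs V_h$, which is where \eqref{eq:infsup3} and \eqref{eq:infsup5} are formulated and where, as you observe, any discrete solution (and, with consistent data, the preconditioned Krylov iterates) lies. Your restriction to the $\hat{\bs\Sigma}_h$-component is therefore not optional bookkeeping but the exact sense in which the theorem is true; the write-up should say explicitly that uniformity in $\lambda$ fails on the complementary $\boldsymbol{\delta}$-direction rather than leaving the impression that the full-space statement follows as written.
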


%Together with \eqref{sigmamass}, we can use the block-diagonal preconditioner
%\begin{equation}\label{Schur}
%\begin{pmatrix}
% M_h^{-1} & O\\
% O & S_h^{-1}
%\end{pmatrix}.
%\end{equation}
The mass matrix $M_h^{-1}$ can be further replaced by the inverse of the diagonal matrix or symmetric Gauss-Seidel iteration and thus the computation of $M_h^{-1}$ is not a problem. The difficulty is the inverse of the Schur complement which will be further preconditioned by an auxiliary space preconditioner in the next section.

%Note that in both $M_h$ and $S_h$, the parameter $\lambda = 0$ and thus preconditioner \eqref{Schur} is independent of $\lambda$.
%According to my numerical experience, the triangular preconditioner will be more robust; read {tripremixPoisson.m} in ifem for the programming details.
%
%\LC{Will write more here. But let's fix the block-diagonal first.}

\subsection{Triangular Preconditioner}
When the diagonal of the mass matrix $D_h$ is used, we can make use of the block decomposition
\begin{equation}
\begin{pmatrix}
D_h & B_h^T\\
B_h & -C_h
\end{pmatrix}
\begin{pmatrix}
 I   & D_h^{-1} B_h^T \\
0  &   - I
\end{pmatrix}
=
\begin{pmatrix}
D_h    &   0  \\
B_h   &  \tilde S_h
\end{pmatrix},
\end{equation}
where $\tilde S_h = B_hD_h^{-1}B_h^T+C_h$ to obtain a triangular preconditioner.

%\begin{definition}
We define the operator $\mathcal G_h: \bs \Sigma_{h}'\times \bs V_{h}' \rightarrow \bs \Sigma_{h} \times \bs V_{h}$
\begin{equation}\label{eq:Matrix_Pre}
\mathcal G_h =
\begin{pmatrix}
 I   &  D_h^{-1} B_h^T \\
0  &    -I
\end{pmatrix}
\begin{pmatrix}
D_h    &   0  \\
B_h   &  \tilde S_h
\end{pmatrix}^{-1},
\end{equation}
%\end{definition}

If we denote by
$$
\widetilde{\mathcal L}_h =
\begin{pmatrix}
D_h & B_h^T\\
B_h & -C_h
\end{pmatrix},
$$
it is trivial to verify that $\mathcal G_h= \widetilde{\mcal L_{h}}^{-1}$. For mass matrix $M_h$, by standard scaling argument, we have $D_h$ is spectrally equivalent to $M_h$ and so $\widetilde{\mcal L_{h}}$ is also stable in the mesh dependent norm. We thus obtain the following result. Detailed eigenvalue analysis of the preconditioned system can be found in~\cite{BankWelfertYserentant1990}.

\begin{theorem}
The $\mcal G_{h}$ is a uniform preconditioner for $\mcal L_{h}^{\lambda}$ i.e., the corresponding operator norms
\begin{align*}
\|\mcal G_{h}\mcal L_{h}^{\lambda}\|_{\bs \Sigma _{h}\times \bs V_{h} \to \bs \Sigma _{h}\times \bs V_{h}}, \|(\mcal G_{h}\mcal L_{h}^{\lambda})^{-1}\|_{\bs \Sigma _{h}\times \bs V_{h} \to \bs \Sigma _{h}\times \bs V_{h}}
\end{align*}
are bounded and independent of parameters $h$ and $\lambda$.
\end{theorem}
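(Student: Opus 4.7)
The plan is to exploit the identity $\mcal{G}_h = \widetilde{\mcal{L}}_h^{-1}$ noted just before the statement and reduce the claim to the uniform stability of $\widetilde{\mcal{L}}_h$ in the mesh-dependent norm $\|\cdot\|_{0,h} \times |\cdot|_{1,h}$. Since $\mcal{G}_h \mcal{L}_h^{\lambda} = \widetilde{\mcal{L}}_h^{-1} \mcal{L}_h^{\lambda}$, submultiplicativity of operator norms gives
\[
\|\mcal{G}_h \mcal{L}_h^{\lambda}\| \leq \|\widetilde{\mcal{L}}_h^{-1}\| \cdot \|\mcal{L}_h^{\lambda}\|, \qquad \|(\mcal{G}_h \mcal{L}_h^{\lambda})^{-1}\| \leq \|(\mcal{L}_h^{\lambda})^{-1}\| \cdot \|\widetilde{\mcal{L}}_h\|,
\]
so it suffices to show that both $\mcal{L}_h^{\lambda}$ and $\widetilde{\mcal{L}}_h$, viewed as operators $\bs\Sigma_h \times \bs V_h \to (\bs\Sigma_h \times \bs V_h)'$, are uniform isomorphisms in the mesh-dependent norm.

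For $\mcal{L}_h^{\lambda}$ the upper bound is a direct consequence of Cauchy--Schwarz, using the $\lambda$-independent bound $\|\mathfrak{A}\bs\tau\|_0 \lesssim \|\bs\tau\|_0$ (which follows from $\lambda/(n\lambda+2\mu) \leq 1/n$), the integration-by-parts argument for $b(\cdot,\cdot)$ exploited in Subsection~3.2, and the trivial inequality $\|\cdot\|_c \leq |\cdot|_{1,h}$, combined with \eqref{sigmamass}. The matching lower bound is precisely the inf-sup conditions \eqref{eq:infsup3} and \eqref{eq:infsup5} already established. So the genuine work is to prove the analogous two-sided bound for $\widetilde{\mcal{L}}_h$.

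The key auxiliary fact I would invoke is the spectral equivalence $D_h \eqsim M_h$, a standard scaling result for mass matrices of nodal bases on shape-regular meshes. Together with \eqref{sigmamass} it yields $(D_h \bs\tau_h, \bs\tau_h) \eqsim \|\bs\tau_h\|_0^2 \eqsim \|\bs\tau_h\|_{0,h}^2$ for all $\bs\tau_h \in \bs\Sigma_h$. Hence $\widetilde{\mcal{L}}_h$ has exactly the structure of $\mcal{L}_h^{0}$ with its $(1,1)$-block replaced by a uniformly $\|\cdot\|_{0,h}$-equivalent one. The proofs of the inf-sup conditions \eqref{eq:infsup3}--\eqref{eq:infsup5} use the $(1,1)$-block only through the bounds $\|\cdot\|_a \lesssim \|\cdot\|_0$ together with Lemma~\ref{lem:temp2}, which involves $\bs\tau$ itself rather than $\mathfrak{A}$. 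Consequently those proofs carry over essentially verbatim to $\widetilde{\mcal{L}}_h$, giving $\|\widetilde{\mcal{L}}_h\|,\|\widetilde{\mcal{L}}_h^{-1}\| \lesssim 1$ uniformly in $h$ and $\lambda$.

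The main, and only slightly delicate, obstacle is to check that the low-order proof of \eqref{eq:infsup5} survives the replacement $a(\cdot,\cdot) \to (D_h \cdot, \cdot)$, since that proof uses $\|\widetilde\bs\sigma_h\|_a$ in an essential way. But inspection shows $\|\widetilde\bs\sigma_h\|_a$ is only ever used to bootstrap an $\|\widetilde\bs\sigma_h\|_0$ estimate via Lemma~\ref{lem:temp2}, and the $D_h$-equivalence gives $(D_h \widetilde\bs\sigma_h, \widetilde\bs\sigma_h)^{1/2} \eqsim \|\widetilde\bs\sigma_h\|_0$ directly, so the $\lambda$-dependent intermediate step is bypassed and the argument in fact simplifies. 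Combining the four uniform bounds then closes the proof with constants independent of both $h$ and $\lambda$.
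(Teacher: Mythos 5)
Your proposal is correct and follows essentially the same route as the paper: the paper's argument is exactly the identity $\mcal G_h=\widetilde{\mcal L}_h^{-1}$ combined with the spectral equivalence $D_h\eqsim M_h$ (standard scaling), which gives uniform stability of $\widetilde{\mcal L}_h$ in the mesh-dependent norms, and then the already-proved stability of $\mcal L_h^{\lambda}$ (inf-sup \eqref{eq:infsup3}, \eqref{eq:infsup5} plus boundedness) yields the two operator-norm bounds, with \cite{BankWelfertYserentant1990} cited for the detailed eigenvalue analysis. Your additional verification that the low-order stabilized case simplifies once the $(1,1)$-block is $L^2$-coercive is a correct elaboration of what the paper leaves implicit, not a different method.
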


In both diagonal and triangular preconditioners, to be practical, we do not compute $S_h^{-1}$ or $\tilde S_h^{-1}$. Instead we shall apply the fast auxiliary space preconditioner to be developed in the next section.

% !TEX root =  main.tex

\section{Auxiliary Space Preconditioner}
In this section we  first review the framework on auxiliary space preconditioners developed by Xu \cite{Xu1996} and then construct one for the linear elasticity problem in mixed forms. We use $H^1$ conforming linear element and primary formulation of linear elasticity with $\lambda = 0$ as the auxiliary space preconditioner and verify all assumptions needed in the framework.

\subsection{Framework}
%The Schur complement will be used in the implementation. Define $A: \boldsymbol{V}_{h}\to\boldsymbol{V}_{h}$ as
%$$
%(A\boldsymbol{w}_h, \boldsymbol{v}_h):= (S_h\boldsymbol{w}_h, \boldsymbol{v}_h).
%$$

%For the analysis, we shall use an equivalent DG formulation. The discontinuous Galerkin method corresponding to the mesh dependent norm $|\cdot|_{1,h}$ is:
%Find $\boldsymbol{u}_h\in \boldsymbol{V}_{h}$ such that
%\begin{equation}\label{eq:dgelas}
%(\boldsymbol{\varepsilon}_h(\boldsymbol{u}_h), \boldsymbol{\varepsilon}_h(\boldsymbol{v}_h))+\sum\limits_{F\in\mathcal{F}_h}h_{F}^{-1}\int_{F}[\boldsymbol{u}_h]\cdot[\boldsymbol{v}_h]\,{\rm d}s=(\boldsymbol{f}, \boldsymbol{v}_h) \quad\quad \forall\,\boldsymbol{v}_h\in \boldsymbol{V}_{h}.
%\end{equation}
%
%Define $A: \boldsymbol{V}_{h}\to\boldsymbol{V}_{h}$ as
%\[
%(A\boldsymbol{w}_h, \boldsymbol{v}_h):=(\boldsymbol{\varepsilon}_h(\boldsymbol{w}_h), \boldsymbol{\varepsilon}_h(\boldsymbol{v}_h))+\sum\limits_{F\in\mathcal{F}_h}h_{F}^{-1}\int_{F}[\boldsymbol{w}_h]\cdot[\boldsymbol{v}_h]\,{\rm d}s \quad \forall~\boldsymbol{w}_h, \boldsymbol{v}_h\in \boldsymbol{V}_{h}.
%\]

%We introduce the linear finite element space as the auxiliary space.
%We shall use the continuous and linear element space to discretize the linear elasticity in the primary formulation and use it as an auxiliary space preconditioner for the Schur complement.
Let
\[
\boldsymbol{\mathcal{V}}_{h}:=\left\{\boldsymbol{v}\in \boldsymbol{H}_0^1(\Omega; \mathbb{R}^n): \boldsymbol{v}|_K\in \boldsymbol{P}_{1}(K; \mathbb{R}^n)\quad \forall\,K\in\mathcal
{T}_h\right\}.
\]
Then $\boldsymbol{\mathcal{V}}_h\subset \boldsymbol{V}_h$ for $k\geq2$, and
\begin{equation}\label{eq:temp8}
|\boldsymbol{v}_h|_{1,h} =\|\boldsymbol{\varepsilon}(\boldsymbol{v}_h)\|_0 \eqsim |\boldsymbol{v}_h|_1 \quad \forall~ \boldsymbol{v}_h\in \boldsymbol{\mathcal{V}}_h.
\end{equation}
%linear conforming
The conforming linear finite element method for the linear elasticity with $\lambda = 0 $ is defined as follows:
Find $\boldsymbol{u}_h\in \boldsymbol{\mathcal{V}}_{h}$ such that
\[
2\mu (\boldsymbol{\varepsilon}(\boldsymbol{u}_h), \boldsymbol{\varepsilon}(\boldsymbol{v}_h))=(\boldsymbol{f}, \boldsymbol{v}_h) \quad\quad \forall\,\boldsymbol{v}_h\in \boldsymbol{\mathcal{V}}_{h}.
\]
Denote $\mathcal A: \boldsymbol{\mathcal{V}}_{h}\to\boldsymbol{\mathcal{V}}_{h}$ by
\[
(\mathcal A\boldsymbol{w}_h, \boldsymbol{v}_h):=2\mu (\boldsymbol{\varepsilon}(\boldsymbol{w}_h), \boldsymbol{\varepsilon}(\boldsymbol{v}_h)) \quad \forall~\boldsymbol{w}_h, \boldsymbol{v}_h\in \boldsymbol{\mathcal{V}}_{h}.
\]
It is apparent that the operator $\mathcal A$ is SPD.

In what follows we assume $\mathcal{T}_h$ is quasi-uniform.
%Since the spectral radius and the condition number of the conforming piecewise linear finite element method for the Poisson's equation are both of order $h^{-2}$ (cf. \cite{BankScott1989}), we get the following estimates for the spectral radius $\rho_{\mathcal{A}}$ (the maximal eigenvalue $\lambda_{\min}(\mathcal{A})$), the minimal eigenvalue $\lambda_{\min}(\mathcal{A})$ and the condition number $\kappa(\mathcal{A})$ from \eqref{eq:temp8} \mnote{ do we need this? only need results for S?}
%\[
%\rho_{\mathcal{A}}=\lambda_{\max}(\mathcal{A})\eqsim h^{-2}, \quad \kappa(\mathcal{A})\eqsim h^{-2}.
%\]
%
Based on the norm equivalence \eqref{eq:temp8}, we can easily derive the estimate of spectral radius and condition number of the Schur complement operator $S$
\begin{equation}\label{eq:spectralA}
\rho_S=\lambda_{\max}(S) \eqsim h^{-2}, \quad \kappa(S) = \frac{\lambda_{\max}(S)}{\lambda_{\min}(S)}\eqsim h^{-2}.
\end{equation}
The relation between $S$ and $S_h$ is given by
\[
S_h=M_{u,h}\underline{S}
\]
with $\underline{S}$ being the matrix representation of $S$.

We introduce the auxiliary space preconditioner for the Schur complement. The idea is to construct a multigrid method
using $\boldsymbol{V}_h$ as the ``fine'' space and $\boldsymbol{\mathcal{V}}_h$ as the ``coarse''
space. Denote $\mathcal{B}:\: \boldsymbol{\mathcal{V}}_h\rightarrow \boldsymbol{\mathcal{V}}_h$ to be such a ``coarse''
solver. It can be either an exact solver or an approximate solver
that satisfies certain conditions, which will be given later. Next,
on the fine space, we need a smoother $R:\: \boldsymbol{V}_h\rightarrow \boldsymbol{V}_h$,
which is symmetric and positive definite. For example, $R$ can be a
Jacobi or symmetric Gauss-Seidel smoother. Finally, to connect the
``coarse'' space with the ``fine'' space, we need a ``prolongation''
operator $\Pi:\:\boldsymbol{\mathcal{V}}_h\rightarrow \boldsymbol{V}_h$. A ``restriction'' operator
$\Pi^t:\: \boldsymbol{V}_h \rightarrow \boldsymbol{\mathcal{V}}_h$ is consequently defined by
$$
( \Pi^t \boldsymbol{v}, \,\boldsymbol{w}) =(\boldsymbol{v},\, \Pi \boldsymbol{w}) \quad\textrm{for } \boldsymbol{v}\in \boldsymbol{V}_h\textrm{ and } \boldsymbol{w}\in \boldsymbol{\mathcal{V}}_h.
$$
It is also well-known that
the matrix representation of the restriction operator $\Pi^t$ is just the transpose of the
matrix representation of the prolongation operator $\Pi$.
%\end{remark}
Then, the auxiliary space preconditioner $X:\: \boldsymbol{V}_h\rightarrow \boldsymbol{V}_h$, following the definition in~\cite{Xu1996},
is given by
\begin{align}
&\textrm{Additive} \qquad &&X = R + \Pi \mathcal{B} \Pi^t,  \label{addB}\\
&\textrm{Multiplicative} \qquad && I-XS = (I-R^tS)(I-\Pi \mathcal{B} \Pi^tS)(I-RS). \label{mulB}
\end{align}
%Note here we only give the formula for multigrid algorithms with 1 smoothing. In practice,
%one may apply the smoother $\nu$ times to achieve better convergence rates,
%as will be demonstrated numerically in Section \ref{sec:numerical}.
%In theoretical analysis, 1 smoothing or $\nu$ smoothings will not make a difference.
%\LC{Delete discussion on smoothing steps. They can be merged and consider as one smoother.}

%Both the additive and the multiplicative versions define symmetric multigrid solvers/preconditioners.
%Readers may refer to~\cite{xu92} for the equivalence between symmetric solvers and preconditioners for symmetric problems.

According to~\cite{Xu1996}, the following theorem holds.
%\medskip
\begin{theorem}[Xu \cite{Xu1996}] \label{thm:abscond}
  Assume that for all $\boldsymbol{v}\in \boldsymbol{V}_h$, $\boldsymbol{w}\in \boldsymbol{\mathcal{V}}_h$,
  \begin{align}
 (S\boldsymbol{v},\, \boldsymbol{v}) \lesssim (R^{-1} \boldsymbol{v}, \, \boldsymbol{v}) &\lesssim  \rho_S (\boldsymbol{v},\, \boldsymbol{v}), \label{eq:mg-R}\\
    (\mathcal{A} \boldsymbol{w},\, \boldsymbol{w}) \lesssim (\mathcal{B}\mathcal{A} \boldsymbol{w},\, \mathcal{A} \boldsymbol{w}) &\lesssim (\mathcal{A} \boldsymbol{w},\, \boldsymbol{w}), \label{eq:mg-cB}\\
    |\Pi \boldsymbol{w}|_{1,h} &\lesssim |\boldsymbol{w}|_{1} \quad\quad\textrm{(stability of $\Pi$)}, \label{eq:mg-Pi}
  \end{align}
and furthermore, assume that there exists a linear operator $P:\, \boldsymbol{V}_h\rightarrow \boldsymbol{\mathcal{V}}_h$ such that
\begin{align}
  |P \boldsymbol{v}|_{1} &\lesssim |\boldsymbol{v}|_{1,h} \quad\qquad\textrm{(stability of $P$)},\label{eq:mg-P1}\\
  \|\boldsymbol{v}-\Pi P \boldsymbol{v}\|_{0}^2 &\lesssim \rho_S^{-1} |\boldsymbol{v}|_{1,h}^2 \qquad\textrm{(approximability)}.\label{eq:mg-P2}
\end{align}
Then the preconditioner $X$ defined in \eqref{addB} or \eqref{mulB} satisfies
$$
\kappa (XS) \lesssim 1.
$$
\end{theorem}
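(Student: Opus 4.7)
The plan is to prove the theorem via the standard fictitious space (or two-level subspace correction) framework for $X=R+\Pi\mathcal B\Pi^t$, and then reduce the multiplicative case to the additive case via the XZ identity.

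First I would write down the additive preconditioner identity
\[
(X^{-1}v,v)=\inf_{\substack{v=v_1+\Pi w\\ v_1\in\boldsymbol V_h,\; w\in\boldsymbol{\mathcal V}_h}}\Bigl\{(R^{-1}v_1,v_1)+(\mathcal B^{-1}w,w)\Bigr\},
\]
which is a classical consequence of computing the $X$-norm of a Riesz representer (see Xu \cite{Xu1996}). Since $X$ and $S$ are both SPD, the extremal eigenvalues of $XS$ are the extremal values of the ratio $(Sv,v)/(X^{-1}v,v)$, so the theorem reduces to proving
\[
(X^{-1}v,v)\;\lesssim\;(Sv,v)\;\lesssim\;(X^{-1}v,v)\qquad\forall\,v\in\boldsymbol V_h.
\]

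For the lower bound on $\lambda_{\min}(XS)$, which requires $(X^{-1}v,v)\lesssim (Sv,v)$, I would exhibit one stable decomposition. Given $v\in\boldsymbol V_h$, set $w:=Pv\in\boldsymbol{\mathcal V}_h$ and $v_1:=v-\Pi Pv$. The approximability \eqref{eq:mg-P2} together with the upper half of \eqref{eq:mg-R} gives
\[
(R^{-1}v_1,v_1)\;\lesssim\;\rho_S\,\|v-\Pi Pv\|_0^2\;\lesssim\;|v|_{1,h}^2,
\]
while the stability \eqref{eq:mg-P1} of $P$ together with the upper half of \eqref{eq:mg-cB} (equivalently $\mathcal B^{-1}\eqsim\mathcal A$) gives
\[
(\mathcal B^{-1}w,w)\;\lesssim\;(\mathcal Aw,w)\;\eqsim\;|Pv|_1^2\;\lesssim\;|v|_{1,h}^2.
\]
Since $(Sv,v)\eqsim |v|_{1,h}^2$ by Lemma~\ref{lm:S}, taking the infimum over decompositions in the identity bounds $(X^{-1}v,v)$ by a multiple of $(Sv,v)$.

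For the upper bound on $\lambda_{\max}(XS)$, I need $(Sv,v)\lesssim(X^{-1}v,v)$, i.e.\ for \emph{every} splitting $v=v_1+\Pi w$ the target norm is controlled by the split norm. Using the triangle inequality and Lemma~\ref{lm:S},
\[
(Sv,v)\;\eqsim\;|v|_{1,h}^2\;\lesssim\;|v_1|_{1,h}^2+|\Pi w|_{1,h}^2.
\]
The first term is bounded by $(R^{-1}v_1,v_1)$ thanks to the lower half of \eqref{eq:mg-R}, since $(Sv_1,v_1)\lesssim(R^{-1}v_1,v_1)$. For the second term, the stability \eqref{eq:mg-Pi} of $\Pi$ together with the lower half of \eqref{eq:mg-cB} yields $|\Pi w|_{1,h}^2\lesssim |w|_1^2\eqsim(\mathcal Aw,w)\lesssim(\mathcal B^{-1}w,w)$. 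Taking the infimum gives $(Sv,v)\lesssim(X^{-1}v,v)$, completing the additive case. For the multiplicative preconditioner \eqref{mulB}, I would appeal to the XZ identity, which expresses $(X^{-1}v,v)$ again as an infimum (with an extra cross term controlled by the same ingredients), and repeat the same two bounds; the outcome $\kappa(XS)\lesssim 1$ follows identically. The main obstacle is the upper bound on $\lambda_{\max}(XS)$: it must hold for every decomposition, so one must check that no smoother/coarse-solver combination can blow up, and this is exactly where the nontrivial stability of the prolongation \eqref{eq:mg-Pi} in the mesh-dependent norm $|\cdot|_{1,h}$ (rather than a plain $H^1$ norm) is used in our setting.
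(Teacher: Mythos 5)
This theorem is quoted from Xu \cite{Xu1996}; the paper itself offers no proof, so there is nothing internal to compare against line by line. Your argument for the additive preconditioner \eqref{addB} is correct and is essentially the classical fictitious-space/stable-decomposition proof underlying Xu's framework: the identity $(X^{-1}\boldsymbol v,\boldsymbol v)=\inf_{\boldsymbol v=\boldsymbol v_1+\Pi\boldsymbol w}\bigl[(R^{-1}\boldsymbol v_1,\boldsymbol v_1)+(\mathcal B^{-1}\boldsymbol w,\boldsymbol w)\bigr]$, the particular decomposition $\boldsymbol w=P\boldsymbol v$, $\boldsymbol v_1=\boldsymbol v-\Pi P\boldsymbol v$ together with \eqref{eq:mg-R}, \eqref{eq:mg-P1}, \eqref{eq:mg-P2} for $\lambda_{\min}(XS)\gtrsim 1$, and the boundedness of arbitrary decompositions via \eqref{eq:mg-R}, \eqref{eq:mg-cB}, \eqref{eq:mg-Pi} for $\lambda_{\max}(XS)\lesssim 1$, with Lemma~\ref{lm:S} supplying $(S\boldsymbol v,\boldsymbol v)\eqsim|\boldsymbol v|_{1,h}^2$. (One small point worth stating explicitly: \eqref{eq:mg-cB} says $\mathcal B\eqsim\mathcal A^{-1}$ as quadratic forms, and passing to $\mathcal B^{-1}\eqsim\mathcal A$ uses that $\mathcal B$ is SPD, which is part of the additive construction.)

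The genuine gap is the multiplicative case \eqref{mulB}, which you dispose of with ``appeal to the XZ identity \dots the outcome follows identically.'' It does not: the error operator is $E=(I-R^tS)(I-\Pi\mathcal B\Pi^tS)(I-RS)$ and $XS=I-E$, so $\lambda_{\min}(XS)=1-\lambda_{\max}(E)$, and $\lambda_{\max}(E)$ is controlled by $\|I-RS\|_S^2$. The hypotheses \eqref{eq:mg-R}--\eqref{eq:mg-cB} only give $\lambda_{\max}(RS)\lesssim 1$ with an unspecified hidden constant; if that constant exceeds $2$ (e.g.\ $R=cS^{-1}$ with $c$ large formally satisfies both halves of \eqref{eq:mg-R}), then $\|I-RS\|_S>1$ and $X$ from \eqref{mulB} need not even be positive definite, so $\kappa(XS)\lesssim1$ cannot follow from these assumptions alone. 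The multiplicative statement implicitly requires a properly scaled (convergent) smoother, e.g.\ $\|I-RS\|_S\le 1$, which Jacobi and symmetric Gauss--Seidel satisfy; moreover the Xu--Zikatanov identity in its standard form is for exact subspace solvers, and its inexact-solver variants need exactly this kind of scaling condition. So you should either add the smoother contraction hypothesis and carry out the multiplicative estimate (bounding $\lambda_{\max}(E)$ by the additive stable-decomposition constant and $\lambda_{\min}(E)\ge 0$ using positive semidefiniteness of $\Pi\mathcal B\Pi^t$), or restrict your proof to the additive case and cite the literature for \eqref{mulB}.
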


%%%%%%%%%%%%%%%%%%%%%%%%%%%%%%%%%%%%%%%%%%%%%%%%%%%%%%%%%%%%%%%%%%
\subsection{Construction}
Now we construct an auxiliary space preconditioner which satisfies all conditions in Theorem \ref{thm:abscond},
namely, inequalities \eqref{eq:mg-R}-\eqref{eq:mg-P2}.
It is straight forward to pick $\mathcal{B}$ that satisfies condition \eqref{eq:mg-cB}.
For example, $\mathcal{B}$ can be either the direct solver, for which $\mathcal{B}\sim \mathcal{A}^{-1}$,
or one step of classical multigrid iteration which satisfies condition \eqref{eq:mg-cB}.

The smoother $R$ is also easy to define. A Jacobi or a symmetric Gauss-Seidel smoother~\cite{BramblePasciak1992} will satisfy condition \eqref{eq:mg-R}. The operator $\Pi$ is the natural inclusion for $k\geq 2$ and the $L^2$ projection $\boldsymbol{Q}_h$ for $k=1$, i.e., taking the averaging of nodal values inside each simplex.
Then the condition~\eqref{eq:mg-Pi} follows from \eqref{eq:temp8} and \eqref{eq:temp9} immediately.

The technical part is to define an operator $P: \boldsymbol{V}_h \to \boldsymbol{\mathcal{V}}_h$ that satisfy the conditions \eqref{eq:mg-P1}-\eqref{eq:mg-P2}.
%
%\begin{remark}
Note that operator $P$ is needed only in the theoretical analysis.
In the implementation, one needs $\mathcal B$, $R$ and $\Pi$ only.

Construction of $P$ is equivalent to specify the function values at each vertex. For an interior vertex $\boldsymbol{x}_i$ of $\mcal T_h$, denoted by $\Omega_i$ the vertex patch of $\boldsymbol{x}_i$, we will simply choose $(P \boldsymbol{v})(\boldsymbol{x}_i):= |\Omega_i|^{-1}\int_{\Omega_i} \boldsymbol{v} \dx$, i.e., the average of a discontinuous polynomial $\boldsymbol{v}$ in the vertex patch. For boundary vertex $\bs x_i\in \partial \Omega$, we set $(P \boldsymbol{v})(\boldsymbol{x}_i): = 0$.

For any $K\in\mathcal{T}_h$, let $\boldsymbol{Q}_K^0\boldsymbol{v}:=(\boldsymbol{Q}_h^0\boldsymbol{v})|_K=|K|^{-1}\int_{K} \boldsymbol{v}\dx$.
Define
\[
\mathcal{T}_{h,i}:=\{K\in\mathcal{T}_h: K\subset \Omega_i\}, \quad\mathcal{F}_{h,i}:=\{F\in\mathcal{F}_h: \boldsymbol{x}_i\in F\}.
\]
Obviously for interior nodes we have
\begin{equation}\label{eq:Pxi}
(P \boldsymbol{v})(\boldsymbol{x}_i)=\sum_{K\in\mathcal{T}_{h,i}}\frac{|K|}{|\Omega_i|}\boldsymbol{Q}_K^0\boldsymbol{v}.
\end{equation}

The error estimate of the operator $P$ can be derived by standard argument used in \cite{Wang2001, Brenner2004a, BrennerWangZhao2004, HuangHuang2011}.
For completeness, we show it in details as follows.
\begin{lemma}\label{lem:temp3}
The operator $P$ satisfies
 $$
 \| \boldsymbol{v} - P \boldsymbol{v}\|_0 + h|P \boldsymbol{v}|_{1} \lesssim h|\boldsymbol{v}|_{1,h}  \quad \forall~\boldsymbol{v}\in \boldsymbol{V}_h.
 $$
\end{lemma}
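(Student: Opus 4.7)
The plan is to use a standard Brenner-type averaging-operator argument: decompose $\boldsymbol{v} - P\boldsymbol{v}$ on each $K$ using the piecewise constant $\boldsymbol{Q}_K^0\boldsymbol{v}$, bound one part by Poincar\'e and the other by a patchwise comparison of element averages, then convert broken-gradient bounds to $|\boldsymbol{v}|_{1,h}$ via the discrete Korn inequality \eqref{eq:korn}. For the $H^1$ part, since $(P\boldsymbol{v})|_K$ is linear, an inverse inequality gives
\[
|P\boldsymbol{v}|_{1,K} \lesssim h_K^{-1}\|P\boldsymbol{v}-\boldsymbol{Q}_K^0\boldsymbol{v}\|_{0,K},
\]
so it suffices to control $\|\boldsymbol{v}-P\boldsymbol{v}\|_{0,K}$ and $\|\boldsymbol{Q}_K^0\boldsymbol{v}-P\boldsymbol{v}\|_{0,K}$ separately in $L^2$.

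First, write $\|\boldsymbol{v}-P\boldsymbol{v}\|_{0,K} \leq \|\boldsymbol{v}-\boldsymbol{Q}_K^0\boldsymbol{v}\|_{0,K} + \|\boldsymbol{Q}_K^0\boldsymbol{v}-P\boldsymbol{v}\|_{0,K}$. The first term is bounded by the classical Poincar\'e estimate $\|\boldsymbol{v}-\boldsymbol{Q}_K^0\boldsymbol{v}\|_{0,K} \lesssim h_K|\boldsymbol{v}|_{1,K} \leq h_K\|\boldsymbol{\nabla}\boldsymbol{v}\|_{0,K}$. For the second term, the linearity of $P\boldsymbol{v}|_K$ together with norm equivalence in finite-dimensional spaces gives
\[
\|\boldsymbol{Q}_K^0\boldsymbol{v}-P\boldsymbol{v}\|_{0,K}^2 \lesssim h_K^{n}\sum_{\boldsymbol{x}_i\in K}\bigl|(P\boldsymbol{v})(\boldsymbol{x}_i)-\boldsymbol{Q}_K^0\boldsymbol{v}\bigr|^2.
\]
By \eqref{eq:Pxi} the weights $|K'|/|\Omega_i|$ sum to one, so for an interior vertex $\boldsymbol{x}_i\in K$,
\[
(P\boldsymbol{v})(\boldsymbol{x}_i)-\boldsymbol{Q}_K^0\boldsymbol{v}=\sum_{K'\in\mathcal{T}_{h,i}}\frac{|K'|}{|\Omega_i|}\bigl(\boldsymbol{Q}_{K'}^0\boldsymbol{v}-\boldsymbol{Q}_K^0\boldsymbol{v}\bigr).
\]

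The core step, and the main technical obstacle, is to control the difference between two element averages $\boldsymbol{Q}_{K^+}^0\boldsymbol{v}$ and $\boldsymbol{Q}_{K^-}^0\boldsymbol{v}$ for a pair of simplices in $\mathcal{T}_{h,i}$ sharing a face $F$. Inserting the face averages $|F|^{-1}\int_F\boldsymbol{v}^\pm\,\dd s$ and using the trace inequality followed by Poincar\'e on each simplex yields
\[
\bigl|\boldsymbol{Q}_{K^+}^0\boldsymbol{v}-\boldsymbol{Q}_{K^-}^0\boldsymbol{v}\bigr|^2 \lesssim h_K^{-n}\Bigl(h_K^2\,|\boldsymbol{v}|_{1,K^+\cup K^-}^2 + h_F\,\|[\boldsymbol{v}]\|_{0,F}^2\Bigr).
\]
Any two elements in $\mathcal{T}_{h,i}$ can be joined by a uniformly bounded chain of face-neighbors (shape regularity), so telescoping this inequality and summing over vertices of $K$ gives
\[
\sum_{K\in\mathcal{T}_h}\|\boldsymbol{Q}_K^0\boldsymbol{v}-P\boldsymbol{v}\|_{0,K}^2 \lesssim h^2\,\|\boldsymbol{\nabla}_h\boldsymbol{v}\|_0^2 + h^2\sum_{F\in\mathcal{F}_h}h_F^{-1}\|[\boldsymbol{v}]\|_{0,F}^2.
\]
For a boundary vertex $\boldsymbol{x}_i\in\partial\Omega$, the setting $(P\boldsymbol{v})(\boldsymbol{x}_i)=0$ is absorbed into the same estimate because the jump $[\boldsymbol{v}]=\boldsymbol{v}$ on boundary faces (as defined in the excerpt), so terminating the neighbor chain at a boundary face produces exactly one extra boundary-jump contribution of the form $h_F\|\boldsymbol{v}\|_{0,F}^2$.

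Combining the two $L^2$ estimates and invoking the discrete Korn inequality \eqref{eq:korn} to bound $\|\boldsymbol{\nabla}_h\boldsymbol{v}\|_0^2$ by $|\boldsymbol{v}|_{1,h}^2$ (after replacing $[\boldsymbol{v}]$ by $\boldsymbol{\pi}_F[\boldsymbol{v}]$ via \eqref{eq:piFestimate}) yields $\|\boldsymbol{v}-P\boldsymbol{v}\|_0 \lesssim h\,|\boldsymbol{v}|_{1,h}$. The $H^1$ bound follows immediately by plugging the same patchwise estimate into the inverse inequality $|P\boldsymbol{v}|_{1,K}\lesssim h_K^{-1}\|P\boldsymbol{v}-\boldsymbol{Q}_K^0\boldsymbol{v}\|_{0,K}$ and summing over $K$.
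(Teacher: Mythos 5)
Your proof is correct and follows essentially the same route as the paper: compare $P\boldsymbol{v}$ with the elementwise averages $\boldsymbol{Q}_K^0\boldsymbol{v}$, control the vertex values via \eqref{eq:Pxi} and differences of neighboring element averages (bounded by face jumps), use the inverse inequality $|P\boldsymbol{v}|_{1,K}\lesssim h_K^{-1}\|P\boldsymbol{v}-\boldsymbol{Q}_K^0\boldsymbol{v}\|_{0,K}$, and conclude with the discrete Korn inequality \eqref{eq:korn} and the norm equivalence \eqref{eq:discreteequivnorm}. The only cosmetic difference is that you convert average-jumps into jumps of $\boldsymbol{v}$ locally by inserting face averages and using trace/Poincar\'e, whereas the paper works with the jumps $[\boldsymbol{Q}_h^0\boldsymbol{v}]$ and converts them to $[\boldsymbol{v}]$ plus projection errors via \eqref{eq:L2projErrorEstimate} at the end.
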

\begin{proof}
According to \eqref{eq:Pxi}, it holds for each interior node $\boldsymbol{x}_i$
\[
|\boldsymbol{Q}_K^0\boldsymbol{v}-(P \boldsymbol{v})(\boldsymbol{x}_i)|^2\lesssim \sum_{K^{\prime}\in\mathcal{T}_{h,i}}|\boldsymbol{Q}_K^0\boldsymbol{v}-\boldsymbol{Q}_{K^{\prime}}^0\boldsymbol{v}|^2\lesssim \sum_{F\in\mathcal{F}_{h,i}}|[\boldsymbol{Q}_h^0\boldsymbol{v}]|^2.
\]
For each boundary node $\boldsymbol{x}_i$, we obtain by similar technique and the definition of jump on the boundary
\[
|\boldsymbol{Q}_K^0\boldsymbol{v}-(P \boldsymbol{v})(\boldsymbol{x}_i)|^2=|\boldsymbol{Q}_K^0\boldsymbol{v}|^2\lesssim
\sum_{F\in\mathcal{F}_{h,i}}|[\boldsymbol{Q}_h^0\boldsymbol{v}]|^2.
\]
Then using the scaling argument, we have
\begin{align*}
\sum_{K\in\mathcal{T}_h}h_K^{-2}\|\boldsymbol{Q}_K^0\boldsymbol{v}-P \boldsymbol{v}\|_{0,K}^2=&\sum_{K\in\mathcal{T}_h}\sum_{i=0}^nh_K^{n-2}|\boldsymbol{Q}_K^0\boldsymbol{v}-(P \boldsymbol{v})(\boldsymbol{x}_{K,i})|^2 \\
& \lesssim  \sum_{F\in\mathcal{F}_h}h_F^{-1}\|[\boldsymbol{Q}_h^0\boldsymbol{v}]\|_{0,F}^2.
\end{align*}
From the $L^2$ error estimate \eqref{eq:L2projErrorEstimate}, discrete KornÕs inequality \eqref{eq:korn}, and the norm equivalence \eqref{eq:discreteequivnorm}, we get
\begin{align}
\sum_{K\in\mathcal{T}_h}h_K^{-2}\| \boldsymbol{v} - P \boldsymbol{v}\|_{0,K}^2& \lesssim \sum_{K\in\mathcal{T}_h}h_K^{-2}\|\boldsymbol{v}-\boldsymbol{Q}_K^0\boldsymbol{v}\|_{0,K}^2 + \sum_{K\in\mathcal{T}_h}h_K^{-2}\|\boldsymbol{Q}_K^0\boldsymbol{v}-P \boldsymbol{v}\|_{0,K}^2 \notag\\
& \lesssim  |\boldsymbol{v}|_{1,h}^2 + \sum_{F\in\mathcal{F}_h}h_F^{-1}\|[\boldsymbol{Q}_h^0\boldsymbol{v}]\|_{0,F}^2 \notag\\
& \lesssim  |\boldsymbol{v}|_{1,h}^2 + \sum_{F\in\mathcal{F}_h}h_F^{-1}\|[\boldsymbol{Q}_h^0\boldsymbol{v}-\boldsymbol{v}]\|_{0,F}^2\lesssim |\boldsymbol{v}|_{1,h}^2. \label{eq:temp10}
\end{align}
It follows from \eqref{eq:temp10} and \eqref{eq:L2projErrorEstimate}
\begin{align*}
|P \boldsymbol{v}|_{1}^2 & =\sum_{K\in\mathcal{T}_h}|P \boldsymbol{v}-\boldsymbol{Q}_K^0\boldsymbol{v}|_{1,K}^2\lesssim\sum_{K\in\mathcal{T}_h}h_K^{-2}\|P \boldsymbol{v}-\boldsymbol{Q}_K^0\boldsymbol{v}\|_{0,K}^2 \\
& \lesssim  \sum_{K\in\mathcal{T}_h}h_K^{-2}\|\boldsymbol{v}-P \boldsymbol{v}\|_{0,K}^2 + \sum_{K\in\mathcal{T}_h}h_K^{-2}\|\boldsymbol{v}-\boldsymbol{Q}_K^0\boldsymbol{v}\|_{0,K}^2\lesssim |\boldsymbol{v}|_{1,h}^2.
\end{align*}
Therefore we can finish the proof by combining the last two inequalities.
\end{proof}

\begin{lemma}\label{lem:temp4}
For any $\boldsymbol{v}\in \boldsymbol{V}_h$, it holds
\begin{equation}\label{eq:temp11}
\|\boldsymbol{v}-\Pi P \boldsymbol{v}\|_{0}^2 \lesssim \rho_S^{-1} |\boldsymbol{v}|_{1,h}^2.
\end{equation}
\end{lemma}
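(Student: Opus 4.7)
Since $\rho_S \eqsim h^{-2}$ by \eqref{eq:spectralA}, the inequality \eqref{eq:temp11} is equivalent to the bound $\|\boldsymbol{v}-\Pi P\boldsymbol{v}\|_0 \lesssim h|\boldsymbol{v}|_{1,h}$. My plan is to split the argument into the two cases that define $\Pi$, and in both cases reduce to the approximation estimate $\|\boldsymbol{v}-P\boldsymbol{v}\|_0 + h|P\boldsymbol{v}|_1 \lesssim h|\boldsymbol{v}|_{1,h}$ already established in Lemma~\ref{lem:temp3}.

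In the high order case $k\geq 2$, we have $\boldsymbol{\mathcal{V}}_h\subset\boldsymbol{V}_h$ and $\Pi$ is simply the inclusion, so $\Pi P\boldsymbol{v}=P\boldsymbol{v}$ and Lemma~\ref{lem:temp3} gives the desired bound immediately. In the low order case $k=1$, the space $\boldsymbol{V}_h$ consists of piecewise constants while $\boldsymbol{\mathcal{V}}_h$ consists of continuous piecewise linears, so $\Pi = \boldsymbol{Q}_h$. Here I would apply the triangle inequality
\[
\|\boldsymbol{v}-\boldsymbol{Q}_h P\boldsymbol{v}\|_0 \leq \|\boldsymbol{v}-P\boldsymbol{v}\|_0 + \|P\boldsymbol{v}-\boldsymbol{Q}_h P\boldsymbol{v}\|_0.
\]
The first term is $\lesssim h|\boldsymbol{v}|_{1,h}$ by Lemma~\ref{lem:temp3}. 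For the second term, since $P\boldsymbol{v}\in \boldsymbol{\mathcal{V}}_h\subset \boldsymbol{H}^1(\Omega;\mathbb{R}^n)$, the standard $L^2$ projection error estimate~\eqref{eq:L2projErrorEstimate} (with $m=1$, $k=1$) yields $\|P\boldsymbol{v}-\boldsymbol{Q}_h P\boldsymbol{v}\|_0 \lesssim h|P\boldsymbol{v}|_1 \lesssim h|\boldsymbol{v}|_{1,h}$, again by Lemma~\ref{lem:temp3}. Combining the two cases and using $h^2 \eqsim \rho_S^{-1}$ produces the stated estimate.

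There is essentially no obstacle: all the serious analytical work, in particular the handling of the jump terms in $|\cdot|_{1,h}$ via the averaging operator $P$, has been absorbed into Lemma~\ref{lem:temp3}. The only mild subtlety is that the low order case requires an extra triangle-inequality step because $\boldsymbol{\mathcal{V}}_h\not\subset\boldsymbol{V}_h$, but this is controlled by a routine $L^2$ projection estimate. The resulting proof is therefore a short corollary of the preceding lemma together with \eqref{eq:spectralA} and \eqref{eq:L2projErrorEstimate}.
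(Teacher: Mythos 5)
Your argument is correct and coincides with the paper's own proof: for $k\geq 2$ the bound is a direct consequence of Lemma~\ref{lem:temp3} and \eqref{eq:spectralA}, and for $k=1$ the paper uses exactly your triangle-inequality split $\|\boldsymbol{v}-\boldsymbol{Q}_hP\boldsymbol{v}\|_0\lesssim\|\boldsymbol{v}-P\boldsymbol{v}\|_0+\|P\boldsymbol{v}-\boldsymbol{Q}_hP\boldsymbol{v}\|_0$ together with \eqref{eq:L2projErrorEstimate} and Lemma~\ref{lem:temp3}. No differences worth noting.
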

\begin{proof}
For $k\geq2$, \eqref{eq:temp11} is the result of Lemma~\ref{lem:temp3} and \eqref{eq:spectralA}. For $k=1$, we obtain from the triangle inequality, \eqref{eq:L2projErrorEstimate}, Lemma~\ref{lem:temp3} and \eqref{eq:spectralA}
\begin{align*}
\|\boldsymbol{v}-\Pi P \boldsymbol{v}\|_{0}^2=&\|\boldsymbol{v}-\boldsymbol{Q}_h P \boldsymbol{v}\|_{0}^2\lesssim \|\boldsymbol{v}- P \boldsymbol{v}\|_{0}^2+\|P\boldsymbol{v}-\boldsymbol{Q}_h P \boldsymbol{v}\|_{0}^2 \\
& \lesssim  \|\boldsymbol{v}- P \boldsymbol{v}\|_{0}^2+h^2|P\boldsymbol{v}|_{1}^2\lesssim h^2|\boldsymbol{v}|_{1,h}^2\lesssim \rho_S^{-1} |\boldsymbol{v}|_{1,h}^2,
\end{align*}
as required.
\end{proof}

Combining Lemma \ref{lm:S}, Theorem~\ref{thm:abscond}, and Lemmas~\ref{lem:temp3}-\ref{lem:temp4}, we have the following estimate of the condition number of $XS$.
\begin{theorem}
Let $R$ be a Jacobi or a symmetric Gauss-Seidel smoother, $\mathcal{B}$ be one step of classical multigrid iteration, and $\Pi$ be $\boldsymbol{Q}_h$. Then the preconditioner $X$ defined in \eqref{addB} or \eqref{mulB} satisfies
$$
\kappa (XS) \lesssim 1.
$$
\end{theorem}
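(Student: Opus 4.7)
The plan is to invoke the abstract auxiliary space convergence theorem (Theorem~\ref{thm:abscond}) and verify each of its five hypotheses \eqref{eq:mg-R}--\eqref{eq:mg-P2} for the present data $(R,\mcal B,\Pi,P)$. The bridge between the operator-level Schur complement $S$ and the mesh-dependent norm $|\cdot|_{1,h}$ that appears in those hypotheses is already supplied by Lemma~\ref{lm:S}, which asserts $\|\bs v_h\|_{S_h} \eqsim |\bs v_h|_{1,h}$, so all five conditions may be read directly in the $|\cdot|_{1,h}$ norm.

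First I would dispatch the smoother condition \eqref{eq:mg-R}. For a pointwise Jacobi or symmetric Gauss--Seidel relaxation applied to $S$, this is a textbook estimate that relies only on quasi-uniformity of $\mcal T_h$ together with the spectral bound $\rho_S \eqsim h^{-2}$ recorded in \eqref{eq:spectralA}. Likewise, \eqref{eq:mg-cB} amounts to the uniform convergence of a single step of classical multigrid for the SPD operator $\mcal A$ on the $H^1$-conforming linear element space; since here $\lambda=0$, invoking Korn's inequality reduces this to a vector Poisson problem on a quasi-uniform mesh, for which standard multigrid theory supplies the required two-sided bound off the shelf.

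Next I would verify the transfer operator bounds. For $k\geq 2$, $\Pi$ is the natural inclusion $\bs{\mcal V}_h \hookrightarrow \bs V_h$, and \eqref{eq:temp8} yields $|\Pi \bs w|_{1,h} = |\bs w|_{1,h} \eqsim |\bs w|_1$ at once. For $k=1$, $\Pi = \bs Q_h$, in which case \eqref{eq:temp9} applied to $\bs v = \bs w \in \bs{\mcal V}_h \subset \bs H_0^1(\Omega;\mathbb R^n)$ gives $|\bs Q_h \bs w|_{1,h} \lesssim |\bs w|_1$. The stability \eqref{eq:mg-P1} of the averaging operator $P$ and its approximability \eqref{eq:mg-P2} are exactly the content of Lemmas~\ref{lem:temp3} and~\ref{lem:temp4} established above, so no extra work is needed.

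With all five hypotheses in force, Theorem~\ref{thm:abscond} immediately delivers $\kappa(XS) \lesssim 1$ for both the additive preconditioner \eqref{addB} and the multiplicative preconditioner \eqref{mulB}. There is no genuinely hard step left at this stage: the substantive analytical work was already done in Lemma~\ref{lm:S} (identification of the Schur-complement energy with $|\cdot|_{1,h}^2$) and in Lemmas~\ref{lem:temp3}--\ref{lem:temp4} (construction and analysis of $P$ via vertex averaging). The one subtlety worth flagging is the treatment of the low order case $k=1$, where $\bs{\mcal V}_h$ fails to sit inside $\bs V_h$ and one must switch from the natural inclusion to $\bs Q_h$ both in the definition of $\Pi$ and in verifying \eqref{eq:mg-Pi}; the analogous issue also reappears in, but has already been absorbed by, Lemma~\ref{lem:temp4}.
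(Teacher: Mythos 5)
Your proposal is correct and follows essentially the same route as the paper: verify the five hypotheses \eqref{eq:mg-R}--\eqref{eq:mg-P2} of Theorem~\ref{thm:abscond} using standard smoother and multigrid theory for $R$ and $\mathcal B$, the bounds \eqref{eq:temp8} and \eqref{eq:temp9} for $\Pi$ (inclusion for $k\geq 2$, $\boldsymbol{Q}_h$ for $k=1$), and Lemmas~\ref{lem:temp3}--\ref{lem:temp4} for $P$, then conclude via Lemma~\ref{lm:S} and the abstract framework. Your flagging of the $k=1$ case, where $\boldsymbol{\mathcal{V}}_h\not\subset\boldsymbol{V}_h$ forces the switch to $\boldsymbol{Q}_h$, matches the paper's treatment exactly.
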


% !TEX root =  main.tex

\section{Numerical Results}
In this section, we will report some numerical results to testify the efficiency and robustness of the auxiliary space preconditioners developed in Sections~4-5 for the mixed finite element method~\eqref{stabmfem1}-\eqref{stabmfem2}. Let $\Omega=(-1,1)^2$, $\mu=0.5$ and the load $\boldsymbol{f}=\boldsymbol{1}$. We use the uniform triangulation $\mathcal{T}_h$ of $\Omega$.
The stopping criteria of our iterative methods is the relative residual is less than $10^{-8}$, and the initial guess is zero.
We run the code on the laptop with Intel Core i5 CPU (1.7 GHz) and 4GB RAM.

\subsection{Block Diagonal Preconditioner}

First we use the minimal residual (MINRES) method with the block diagonal preconditioner
\[
\left(
\begin{array}{cc}
D_{h}^{-1} & 0 \\
0 & (B_hD_h^{-1}B_h^T+C_h)^{-1} \\
\end{array}
\right)
\]
to solve the mixed finite element method~\eqref{stabmfem1}-\eqref{stabmfem2}, where $D_h$ is the diagonal matrix of $M_h$.
To solve the Schur complement $B_hD_h^{-1}B_h^T+C_h$, we apply the multiplicative auxiliary space  preconditioner \eqref{mulB}, in which we employ three steps of the Gauss-Seidel smoother for $R$  and one step of V-cycle multigrid method with one pre-smoothing and one post-smoothing for $\mathcal{B}$.

The iteration numbers and CPU time for the block diagonal preconditioned MINRES method are shown in Tables~\ref{tab:diagk1}-\ref{tab:diagk3} for $k=1, 2, 3$, from which we can see that the iteration steps are uniform with respect to the meshsize $h$ and the Lam$\acute{e}$ constant $\lambda$.

\begin{remark}
The iteration steps can be further reduced by introduce a scaling $\textsf{scale}*B_hD_h^{-1}B_h^T$.
\end{remark}
%To reduce the iteration step by balancing the update between the stress and displacement, we then use the following block diagonal preconditioner
%\[
%\left(
%\begin{array}{cc}
%D_{h}^{-1} & 0 \\
%0 & (\textsf{scale}*B_hD_h^{-1}B_h^T+C_h)^{-1} \\
%\end{array}
%\right)
%\]
%where $\textsf{scale}$ is a positive real number.
%For $k=2$ and $k=3$, $\mathcal A$ is also multiplied by $\textsf{scale}$.
%We test several values of $\textsf{scale}$, and show the best numerical results in Tables~\ref{tab:diagk1scale0_5}-\ref{tab:diagk3scale0_3}.
%It is observed that the iteration steps are fewer when using the scaled preconditioner.
%Next we replace $D_h$ with $M_h$ in the first block in the scaled preconditioner and use two steps of symmetric Gauss-Seidel iteration to solve $M_h$, but it doesn't perform better.
%It needs more iteration steps for $k=1$ and $k=3$. And it consumes more time for $k=2$ although the iteration steps are fewer.

\begin{table}[htbp]%\footnotesize
  \centering
  \caption{The iteration steps and CPU time (in seconds) of block diagonal preconditioned MINRES method for $k=1$ }
%\resizebox{\textwidth}{!}{ %
\begin{tabular}{|c|cc|cc|cc|cc|cc|}
    \hline \hline
     \multirow{2}[4]{*}{$\#$dofs} & \multicolumn{2}{|c|}{$\lambda=0$} & \multicolumn{2}{|c|}{$\lambda=10$} & \multicolumn{2}{|c|}{$\lambda=100$} & \multicolumn{2}{|c|}{$\lambda=1000$} & \multicolumn{2}{|c|}{$\lambda=+\infty$} \\
   \cline{2-11}
          & steps & time & steps & time & steps & time & steps & time & steps & time \\ \hline
    1891         & 43    & 0.08 & 65    & 0.13 & 74    & 0.14  & 74    & 0.14 & 74    & 0.14 \\ \hline
    7363         & 46    & 0.39 & 75    & 0.61 & 84    & 0.69 & 86    & 0.70 & 86    & 0.70 \\ \hline
    29059         & 47    & 1.53 & 78    & 2.48 & 91    & 2.89  & 92    & 3.04 & 92    & 3.04 \\ \hline
    115459         & 47    & 6.11  & 81    & 10.4 & 95    & 12.3 & 96    & 12.3 & 96    & 12.3 \\ \hline
    460291         & 47    & 26.2 & 81    & 45.0 & 97    & 53.6 & 98    & 54.3 & 98    & 54.3 \\
    \hline \hline
    \end{tabular}%
    %}
  \label{tab:diagk1}%
\end{table}%

\begin{table}[htbp]%\footnotesize
  \centering
  \caption{The iteration steps and CPU time (in seconds) of block diagonal preconditioned MINRES method for $k=2$ }
%\resizebox{\textwidth}{!}{ %
\begin{tabular}{|c|cc|cc|cc|cc|cc|}
    \hline \hline
    \multirow{2}[4]{*}{$\#$dofs} & \multicolumn{2}{|c|}{$\lambda=0$} & \multicolumn{2}{|c|}{$\lambda=10$} & \multicolumn{2}{|c|}{$\lambda=100$} & \multicolumn{2}{|c|}{$\lambda=1000$} & \multicolumn{2}{|c|}{$\lambda=+\infty$} \\
   \cline{2-11}
           & steps & time & steps & time & steps & time & steps & time & steps & time \\ \hline
    1811         & 57    & 0.17 & 85    & 0.25  & 93    & 0.27 & 94    & 0.28 & 94    & 0.28 \\ \hline
    7075         & 58    & 0.70 & 91    & 1.11  & 98    & 1.19 & 100   & 1.20 & 100   & 1.20 \\ \hline
    27971         & 58    & 2.87 & 93    & 4.58 & 102   & 5.13 & 102   & 5.13 & 102   & 5.13 \\ \hline
    111235         & 58    & 11.8 & 95    & 18.9 & 103   & 20.6 & 104   & 21.4    & 104   & 21.4 \\ \hline
    443651         & 57    & 48.4 & 96    & 79.7 & 104   & 86.0 & 104   & 86.0 & 106   & 87.7 \\
    \hline \hline
    \end{tabular}%
%    }%
  \label{tab:diagk2}%
\end{table}%

\begin{table}[htbp]%\footnotesize
  \centering
  \caption{The iteration steps and CPU time (in seconds) of block diagonal preconditioned MINRES method for $k=3$ }
%\resizebox{\textwidth}{!}{ %
\begin{tabular}{|c|cc|cc|cc|cc|cc|}
    \hline \hline
    \multirow{2}[4]{*}{$\#$dofs} & \multicolumn{2}{|c|}{$\lambda=0$} & \multicolumn{2}{|c|}{$\lambda=10$} & \multicolumn{2}{|c|}{$\lambda=100$} & \multicolumn{2}{|c|}{$\lambda=1000$} & \multicolumn{2}{|c|}{$\lambda=+\infty$} \\
   \cline{2-11}
           & steps & time & steps & time & steps & time & steps & time & steps & time \\ \hline
    971         & 56    & 0.13 & 89    & 0.19 & 91    & 0.20 & 91    & 0.20 & 91    & 0.20 \\ \hline
    3763         & 58    & 0.58 & 88    & 0.86 & 94    & 0.95 & 94    & 0.95 & 94    & 0.95 \\ \hline
    14819         & 58    & 2.44 & 90    & 3.78 & 96    & 4.01 & 96    & 4.01 & 96    & 4.01 \\ \hline
    58819         & 58    & 9.83 & 90    & 15.3 & 96    & 16.3 & 96    & 16.3 & 97    & 16.4 \\ \hline
    234371         & 57    & 39.7 & 90    & 62.8 & 96    & 66.3 & 98    & 67.4 & 98    & 67.4 \\
    \hline \hline
    \end{tabular}%
  \label{tab:diagk3}%
\end{table}%

\subsection{Block Triangular Preconditioner}

Next we examine the generalized minimal residual (GMRES) method with the block triangular preconditioner
\[
\left(
\begin{array}{cc}
D_{h} & B_h^T \\
B_h & -C_h \\
\end{array}
\right)^{-1} =\left(
\begin{array}{cc}
I & D_{h}^{-1}B_h^T \\
0 & -I \\
\end{array}
\right)
\left(
\begin{array}{cc}
D_{h} & 0 \\
B_h & B_hD_h^{-1}B_h^T+C_h \\
\end{array}
\right)^{-1}.
\]
Set restart=20 in the GMRES method.
We still exploit the same multiplicative auxiliary space  preconditioner as in the block diagonal preconditioner to solve the Schur complement.

The iteration numbers and CPU time for the block triangular preconditioned GMRES method are shown in Tables~\ref{tab:trik1}-\ref{tab:trik3} for $k=1, 2, 3$.
Again the iteration steps are uniform with respect to the meshsize $h$ and the Lam$\acute{e}$ constant $\lambda$. The performance of the block triangular preconditioned GMRES method is better than the block diagonal preconditioned MINRES method. The iteration steps and CPU time are almost halved comparing with the block diagonal preconditioner.

\begin{table}[htbp]%\footnotesize
  \centering
  \caption{The iteration steps and CPU time (in seconds) of block triangular preconditioned GMRES method for $k=1$}
%\resizebox{\textwidth}{!}{ %
\begin{tabular}{|c|cc|cc|cc|cc|cc|}
    \hline \hline
     \multirow{2}[4]{*}{$\#$dofs} & \multicolumn{2}{|c|}{$\lambda=0$} & \multicolumn{2}{|c|}{$\lambda=10$} & \multicolumn{2}{|c|}{$\lambda=100$} & \multicolumn{2}{|c|}{$\lambda=1000$} & \multicolumn{2}{|c|}{$\lambda=+\infty$} \\
   \cline{2-11}
          & steps & time & steps & time & steps & time & steps & time & steps & time \\ \hline
  %  43    & 8     &    0   & 12    & 0     & 12    & 0     & 12    & 0     & 12    & 0 \\ \hline
  %  139         & 12    &   0    & 19    & 0     & 20    & 0     & 20    & 0     & 20    & 0 \\ \hline
  %  499         & 15    & 0     & 26    & 0.015 & 28    & 0.016 & 28    & 0.016 & 28    & 0.016 \\ \hline
    1891         & 20    & 0.05 & 34    & 0.06 & 38    & 0.08 & 39    & 0.08 & 39    & 0.08 \\ \hline
    7363         & 22    & 0.20 & 39    & 0.36  & 46    & 0.42 & 47    & 0.44 & 47    & 0.44 \\ \hline
    29059         & 24    & 0.88 & 45    & 1.64 & 50    & 1.85 & 51    & 1.88 & 51    & 1.88 \\ \hline
    115459         & 24    & 3.66 & 47    & 7.22 & 54    & 8.12 & 55    & 8.30 & 55    & 8.30 \\ \hline
    460291         & 25    & 16.6 & 50    & 32.6 & 57    & 37.5 & 59    & 39.3 & 59    & 39.3 \\
    \hline \hline
    \end{tabular}%
  \label{tab:trik1}%
\end{table}%

\begin{table}[htbp]%\footnotesize
  \centering
  \caption{The iteration steps and CPU time (in seconds) of block triangular preconditioned GMRES method for $k=2$}
%\resizebox{\textwidth}{!}{ %
\begin{tabular}{|c|cc|cc|cc|cc|cc|}
    \hline \hline
     \multirow{2}[4]{*}{$\#$dofs} & \multicolumn{2}{|c|}{$\lambda=0$} & \multicolumn{2}{|c|}{$\lambda=10$} & \multicolumn{2}{|c|}{$\lambda=100$} & \multicolumn{2}{|c|}{$\lambda=1000$} & \multicolumn{2}{|c|}{$\lambda=+\infty$} \\
   \cline{2-11}
          & steps & time & steps & time & steps & time & steps & time & steps & time \\ \hline
 %   131   & 15    & 0     & 23    & 0     & 24    & 0     & 24    & 0     & 24    & 0 \\ \hline
 %   475         & 16    & 0.016 & 25    & 0.032 & 26    & 0.032 & 26    & 0.032 & 26    & 0.032 \\ \hline
    1811         & 18    & 0.06 & 29    & 0.10 & 31    & 0.11 & 31    & 0.11 & 32    & 0.11 \\ \hline
    7075         & 20    & 0.27 & 32    & 0.45 & 34    & 0.47 & 35    & 0.48 & 35    & 0.48 \\ \hline
    27971         & 22    & 1.24 & 35    & 1.92 & 37    & 2.05 & 38    & 2.12 & 38    & 2.12 \\ \hline
    111235         & 23    & 5.25 & 37    & 8.53  & 40    & 9.23 & 41    & 9.31 & 41    & 9.31 \\ \hline
    443651         & 24    & 23.0 & 39    & 37.1 & 44    & 41.5 & 44    & 41.5 & 44    & 41.5 \\
    \hline \hline
    \end{tabular}%
  \label{tab:trik2}%
\end{table}%

\begin{table}[htbp]%\footnotesize
  \centering
  \caption{The iteration steps and CPU time (in seconds) of block triangular preconditioned GMRES method for $k=3$}
%\resizebox{\textwidth}{!}{ %
\begin{tabular}{|c|cc|cc|cc|cc|cc|}
    \hline \hline
     \multirow{2}[4]{*}{$\#$dofs} & \multicolumn{2}{|c|}{$\lambda=0$} & \multicolumn{2}{|c|}{$\lambda=10$} & \multicolumn{2}{|c|}{$\lambda=100$} & \multicolumn{2}{|c|}{$\lambda=1000$} & \multicolumn{2}{|c|}{$\lambda=+\infty$} \\
   \cline{2-11}
          & steps & time & steps & time & steps & time & steps & time & steps & time \\ \hline
   % 259   & 19    & 0.015 & 26    & 0.016 & 26    & 0.016 & 26    & 0.016 & 26    & 0.016 \\ \hline
    971         & 20    & 0.05 & 27    & 0.06 & 28    & 0.06 & 28    & 0.06 & 28    & 0.06 \\ \hline
    3763         & 21    & 0.24 & 29    & 0.31 & 30    & 0.33 & 30    & 0.33 & 30    & 0.33 \\ \hline
    14819         & 22    & 1.02 & 30    & 1.36 & 32    & 1.47 & 32    & 1.47 & 32    & 1.47 \\ \hline
    58819         & 23    & 4.30 & 31    & 5.80 & 33    & 6.16 & 33    & 6.16 & 33    & 6.16 \\ \hline
    234371         & 24    & 18.6 & 32    & 24.6 & 34    & 26.1 & 35    & 26.9 & 35    & 26.9 \\
    \hline \hline
    \end{tabular}%
  \label{tab:trik3}%
\end{table}%

%\begin{table}[htbp]%\footnotesize
%  \centering
%  \caption{The iteration steps and CPU time (in seconds) of block triangular preconditioned GMRES method for $k=4$}
%%\resizebox{\textwidth}{!}{ %
%\begin{tabular}{|c|cc|cc|cc|cc|cc|}
%    \hline
%     \multirow{2}[4]{*}{$\#$dofs} & \multicolumn{2}{|c|}{$\lambda=0$} & \multicolumn{2}{|c|}{$\lambda=10$} & \multicolumn{2}{|c|}{$\lambda=100$} & \multicolumn{2}{|c|}{$\lambda=1000$} & \multicolumn{2}{|c|}{$\lambda=10000$} \\
%   \cline{2-11}
%          & steps & time & steps & time & steps & time & steps & time & steps & time \\ \hline
%    %427   & 26    & 0.031 & 34    & 0.047 & 32    & 0.046 & 32    & 0.046 & 32    & 0.046 \\ \hline
%    1627         & 28    & 0.17 & 35    & 0.22 & 36    & 0.22 & 36    & 0.22 & 36    & 0.22 \\ \hline
%    6355         & 28    & 0.80 & 36    & 1.00     & 37    & 1.05 & 38    & 1.06 & 38    & 1.06 \\ \hline
%    25123         & 29    & 3.42 & 37    & 4.30 & 38    & 4.44 & 38    & 4.44 & 38    & 4.44 \\ \hline
%    99907         & 30    & 14.4 & 38    & 18.4 & 40    & 19.3 & 40    & 19.3 & 40    & 19.3 \\ \hline
%    398467         & 31    & 61.1 & 40    & 78.8 & 42    & 83.0 & 43    & 86.4 & 43    & 86.4 \\
%    \hline
%    \end{tabular}%
%  \label{tab:trik4}%
%\end{table}%

\end{document}